\theoremstyle{plain}
  \newtheorem{theorem}{Theorem}[section]
  \newtheorem{proposition}[theorem]{Proposition}
  \newtheorem{corollary}[theorem]{Corollary}
\theoremstyle{definition}
  \newtheorem{definition}[theorem]{Definition}
  \newtheorem{example}[theorem]{Example}
 \theoremstyle{remark}
  \newtheorem{remark}[theorem]{Remark}
\numberwithin{equation}{section}
\def\NN{{\mathbb N}}
\def\ZZ{{\mathbb Z}}
\def\QQ{{\mathbb Q}}
\def\RR{{\mathbb R}}
\def\LLL{{\mathcal{L}}}
\def\OOO{{\mathcal{O}}}
\def\CCC{{\mathcal{C}}}
\def\AAA{{\mathcal{A}}}
\def\symm{\mathfrak{S}}
\def\Hilb{\mathrm{Hilb}}
\def\Sym{\mathrm{Sym}}
\def\Kroot{K^\mathrm{root}}
\def\Kweight{K^\mathrm{wt}}
\def\Lroot{L^\mathrm{root}}
\def\Vroot{V^\mathrm{root}}
\def\Lweight{L^\mathrm{wt}}
\def\sss{\mathrm{s}}
\def\HHH{\mathrm{H}}
\def\Des{\mathrm{Des}}
\def\maj{\mathrm{maj}}
\def\Irr{\mathrm{Irr}}
\def\xx{\mathbf{x}}
\def\yy{\mathbf{y}}
\def\aa{\mathbf{a}}
\def\bb{\mathbf{b}}
\def\XX{\mathbf{X}}
\def\uu{\mathbf{u}}
\begin{document}

\title[Linear extension sums as valuations on cones]
{Linear extension sums as valuations on cones}

\author{Adrien Boussicault}
\email{adrien.boussicault@univ-mlv.fr}
\address{LaBRI\\
Universit\'e Bordeaux 1\\
351 Cours de la Lib\'eration\\
33400 Talence\\
France}

\author{Valentin F\'eray}
\email{feray@labri.fr}
\address{LaBRI\\
Universit\'e Bordeaux 1\\
351 Cours de la Lib\'eration\\
33400 Talence\\
France}

\author{Alain Lascoux}
\email{alain.lascoux@univ-mlv.fr}
\address{Universit\'e Paris-Est\\
Institut Gaspard Monge\\
77454 Marne-la-Vall\'ee\\
France}

\author{Victor Reiner}
\email{reiner@math.umn.edu}
\address{School of Mathematics\\
University of  Minnesota\\
Minneapolis, MN 55455\\
USA}

\thanks{Fourth author supported by NSF grant DMS-0601010.}

\keywords{poset, rational function identities, 
valuation of cones, lattice points, 
affine semigroup ring, Hilbert series, total residue, 
root system, weight lattice}

\subjclass{
06A11
52B45, 
52B20,
}

\begin{abstract}
The geometric and algebraic theory of valuations on cones
is applied to understand identities involving summing
certain rational functions over the set of linear extensions of a poset.
\end{abstract}

\maketitle


\section{Introduction}
\label{Intro-section}

This paper presents a different viewpoint on the following two
classes of rational function summations, which are both summations
over the set $\LLL(P)$ of all linear extensions
of a partial order $P$ on the set 
$\{1,2,\ldots,n\}$:
$$
\begin{aligned}
\Psi_P(\xx)&:= \sum_{w \in \LLL(P)} 
  w\left(\frac{1}{(x_1-x_2)(x_2-x_3)\cdots(x_{n-1}-x_n)} \right); \\
\Phi_P(\xx)&:= \sum_{w \in \LLL(P)} 
  w\left(\frac{1}{x_1(x_1+x_2)(x_1+x_2+x_3)\cdots(x_1+\cdots+x_n)} \right).
\end{aligned}
$$
Recall that a {\it linear extension} is a permutation 
$w=(w(1),\ldots,w(n))$ in the symmetric group $\symm_n$  for which
the {\it linear order} $P_w$ defined by 
$
w(1) <_{P_w} \cdots <_{P_w} w(n)
$
satisfies $i <_{P_w} j$ whenever $i <_P j$.

Several known results express these sums explicitly for particular posets $P$ as 
rational functions in lowest terms.  In the past, these results have most often been
proven by induction, sometimes in combination with techniques 
such as {\it divided differences} and more general operators 
on multivariate polynomials.
We first explain three of these results that motivated us.

\subsection{Strongly planar posets}

The rational function $\Psi_P(\xx)$ was introduced by C. Greene \cite{Greene}
in his work on the Murnaghan-Nakayama formula.  There he evaluated
$\Psi_P(\xx)$ when $P$ is a {\it strongly planar} poset in the
sense that the poset $P\sqcup \{\hat{0},\hat{1}\}$ with an extra bottom
and top element has a planar embedding for its Hasse diagram, with all edges
directed upward in the plane.  
To state his evaluation, note that in this situation, the edges of the Hasse diagram
for $P$ dissect the plane into bounded regions $\rho$, and the set of vertices
lying on the boundary of $\rho$ will consist of two chains,
having a common minimum element $\min(\rho)$ and maximum $\max(\rho)$ element
in the partial order $P$.

\vskip .1in
\noindent
{\bf Theorem A.} (Greene \cite[Theorem 3.3]{Greene})
{\it 
For any strongly planar poset $P$,
$$
\Psi_P(\xx) = \frac{ \prod_{\rho} (x_{\min(\rho)} - x_{\max(\rho)}) }
                      {  \prod_{i \lessdot_P j} (x_i - x_j)  }
$$
where the product in the denominator runs over all covering relations
$i \lessdot_P j$, or over the edges of the Hasse diagram for $P$,
while the product in the numerator runs over all bounded regions $\rho$ 
for the Hasse diagram for $\rho$.
}

\subsection{Skew diagram posets}

Further work on $\Psi_P(\xx)$ appeared in 
\cite{Boussicault, TheseBoussicault, BoussicaultFeray, Ilyuta}.
For example, we will prove in Section~\ref{skew-diagram-section} the following generalization
of a result of the first author.
Consider a {\it skew (Ferrers) diagrams} $D=\lambda/\mu$,
in English notation as a collection of points $(i,j)$ in the plane,
where rows are numbered $1,2,\ldots,r$ from top to bottom
(the usual English convention), and the columns
numbered $1,2,\ldots,c$ from right to left (not the usual English convention).
Thus the northeasternmost and southwesternmost points of $D$ are labelled $(1,1)$ 
and $(r,c)$, respectively;  see Example~\ref{skew-example}.  
Define the {\it bipartite poset} $P_D$ on the set $\{x_1,\ldots,x_r,y_1,\ldots,y_c\}$
having an order relation $x_i <_{P_D} y_j$ whenever $(i,j)$ is a point of $D$.

\vskip .1in
\noindent
{\bf Theorem B.} 
{\it
For any skew diagram $D$,
$$
\Psi_{P_D}(\xx) = \frac{ \sum_{\pi} \prod_{(i,j) \in D \setminus \pi} (x_i - y_j) }
                      {  \prod_{(i,j) \in D} (x_i - y_j)  }.
$$
where the product in the numerator runs over all lattice paths $\pi$ from $(1,1)$ to $(r,c)$
inside $D$ that take steps either one unit south or west.

In particular (Boussicault \cite[Prop. 4.7.2]{TheseBoussicault}), when $\mu=\emptyset$,
so that $D$ is the Ferrers diagram for a partition\footnote{Such bipartite graphs were
called {\it $\lambda$-complete} in \cite{TheseBoussicault}, and sometimes appear in the literature under
the name {\it Ferrers graphs}.} $\lambda$,
this can be rewritten
$$
\Psi_{P_D}(\xx) = \frac{ \symm_{\hat{w}}(\xx,\yy)} { \symm_{w}(\xx,\yy)   }
$$
where $\symm_{w}(\xx,\yy), \symm_{\hat{w}}(\xx,\yy)$ are the double Schubert polynomials for
the dominant permutation $w$ having Lehmer code $\lambda=(\lambda_1,\ldots,\lambda_r)$,
and the vexillary permutation $\hat{w}$ having Lehmer code 
$\hat{\lambda}:=(0,\lambda_2-1,\ldots,\lambda_r-1)$.
}

\subsection{Forests}

In his treatment of the character table for the symmetric group $\symm_n$, 
D.E. Littlewood \cite[p. 85]{Littlewood} 
used the fact that the {\it antichain} poset $P=\varnothing$, having no
order relations on $\{1,2,\ldots,n\}$ and whose set of linear extensions $\LLL(\varnothing)$ 
is equal to all of $\symm_n$, satisfies
\begin{equation}
\label{Littlewoods-identity}
\Phi_{\varnothing}(\xx) = \frac{1}{x_1 x_2 \cdots x_n}.
\end{equation}
The following generalization appeared more recently in \cite{CHNT}.
Say that a poset $P$ is a {\it forest} if every element is covered by
at most one other element.  
\vskip .1in
\noindent
{\bf Theorem C.} 
(Chapoton, Hivert, Novelli, and Thibon \cite[Lemma 5.3]{CHNT})
{\it
For any forest poset $P$,
$$
\Phi_P(\xx) = \frac{1}{\prod_{i=1}^n \left( \sum_{j \leq_P i} x_j\right)}.
$$
}

\subsection{The geometric perspective of cones}

Our first new perspective on these results views $\Psi_P(\xx) , \Phi_P(\xx)$
as instances of a well-known valuation on convex polyhedral cones $K$ in a
Euclidean space $V$ with inner product $\langle \cdot , \cdot \rangle$
$$
s(K;\xx):=\int_K e^{-\langle \xx, v \rangle} dv.
$$
One can think of $s(K;\xx)$ as the multivariable {\it Laplace transform}
applied to the $\{0,1\}$-valued characteristic
function of the cone $K$.  After reviewing the properties of this valuation in 
Section~\ref{Cone-section}, we use these to establish that 
$$
\begin{aligned}
\Psi_P(\xx) &= s(\Kroot_P;\xx) \\
\Phi_P(\xx) &= s(\Kweight_P;\xx) 
\end{aligned}
$$
where $\Kroot_P, \Kweight_P$ are two cones naturally
associated to the poset $P$ as follows:
$$
\begin{aligned}
\Kroot_P  &=\RR_+\{ e_i - e_j : i <_P j\}\\
\Kweight_P&=\{x \in \RR_+^n: x_i \geq x_j \text{ for }i <_P j \},
\end{aligned}
$$
$\RR_+$ denotes the nonnegative real numbers.  
In  Sections~\ref{skew-diagram-section} and
\ref{extreme-rays-section}, this identification is used, together with the properties
of $s(K;\xx)$ from Section~\ref{Cone-section},
to give simple geometric proofs underlying Theorems B and C above.

\subsection{The algebraic perspective of Hilbert series}

One gains another useful perspective when the cone
$K$ is {\it rational} with respect to some lattice $L$ inside $V$, which holds for both
$\Kroot_P, \Kweight_P$.  This
allows one to compute a more refined valuation, the multigraded {\it Hilbert series}
$$
\Hilb(K \cap L;\xx) := \sum_{ v \in K \cap L\ } e^{\langle \xx, v \rangle }
$$
for the affine semigroup ring $k[K \cap L]$ with coefficients in any field $k$.  
As discussed in Section~\ref{total-residue-section} below, it
turns out that $\Hilb(K \cap L;\xx)$ is a meromorphic function of $x_1,\ldots,x_n$,
whose Laurent expansion begins in total degree $-d$, where $d$ is the dimension
of the cone $K$, with this lowest term of total degree $-d$ equal to $s(K;\xx)$,
up to a predictable sign. This allows one to algebraically analyze the 
ring $k[K \cap L]$, compute its Hilbert series, and thereby recover $s(K;\xx)$.  

For example, in Section~\ref{notch-section}, 
it will be shown that Theorem A by Greene is the
reflection of a {\it complete intersection} 
presentation for the affine semigroup ring of $\Kroot_P$
when $P$ is a strongly planar poset, having generators indexed by
the edges in the Hasse diagram of $P$, and relations among the generators 
indexed by the bounded regions $\rho$.

As another example, in Section~\ref{P-partition-section}, 
it will be shown that Theorem C, along
with the ``maj'' hook formula for forests due to 
Bj\"orner and Wachs \cite[Theorem 1.2]{BjornerWachs} 
are both consequences of an easy Hilbert series 
formula (Proposition~\ref{principal-ideal-equation} below) related to $\Kweight_P$ 
when $P$ is a forest.

\section{Cones and valuations}
\label{Cone-section}

\subsection{A review of cones}

We review some facts and terminology about polyhedral cones;
see, e.g., \cite[Chapter 7]{MillerSturmfels}, \cite[\S4.6]{Stanley-EC1} for background.

Let $V$ be an $n$-dimensional vector space over $\RR$.
A linear function $\ell$ in $V^*$ has as zero set a hyperplane $H$ 
containing the origin, and defines a closed halfspace
$H^+$ consisting of the points $v$ in $V$ with $\ell(v) \geq 0$.
A {\it polyhedral cone} $K$ (containing the origin $0$)
in $V$ is the intersection 
$
K=\bigcap_i H_i^+
$
of finitely many linear halfspaces $H_i^+$,
or alternatively the {\it nonnegative span}
$
K=\RR_+\{u_1,\ldots,u_N\}
$
of finitely many generating vectors $u_i$ in $V$.  
Its {\it dimension}, denoted $\dim_\RR K$,
is the dimension of the smallest linear subspace that contains it.
One says $K$ is {\it full-dimensional} if $\dim_\RR K =n=\dim_\RR V$.

Say that $K$ is {\it pointed} if it contains no lines.  In this case,
if $\{u_1,\ldots,u_N\}$ are a minimal set of vectors for which $K=\RR_+\{u_1,\ldots,u_N\}$,
then the $u_i$ are said to span the {\it extreme rays} $\RR_+u_i$ of $K$; these
rays are unique, although the choice of vectors $u_i$ are unique only up to positive scalings.

Say that $K$ is {\it simplicial} if its extreme rays are spanned by a
linearly independent set of vectors $\{u_1,\ldots,u_N\}$, so that
$N=\dim_\RR K \leq n$.

In the dual space $V^*$ one has the {\it dual} or {\it polar cone}
$$
K^*:=\{ x \in V^*: \langle x,v \rangle \geq 0 \text{ for all }v \in K\}.
$$
The following facts about duality of cones are well-known:
\begin{enumerate}
\item[$\bullet$]
Under the identification $(V^*)^*=V$, one has $(K^*)^*=K$.  
\item[$\bullet$]
A cone $K$ is pointed (resp. full-dimensional) 
if and only if its dual cone $K^*$ is full-dimensional (resp. pointed).  
\item[$\bullet$]
A cone $K$ is simplicial
if and only if its dual cone $K^*$ is simplicial.
\end{enumerate}

\subsection{The Laplace transform valuation}

Choose a basis $v_1,\ldots,v_n$ for $V$ and
dual basis $x_1,\ldots,x_n$ for $V^*$.  
Then the polynomial functions $\QQ[V]$ on $V$ are 
identified with the symmetric/polynomial algebras
$
\Sym(V^*) \cong \RR[x_1,\ldots,x_n]
$
and the rational functions $\QQ(V)$ on $V$
with the field of fractions
$\QQ(x_1,\ldots,x_n)$.

In order to consider integrals on $V$, let $dv=dv_1 \cdots dv_n$ 
denote Lebesgue measure on $\RR^n \cong V$ using the basis $v_1,\ldots,v_n$ for
this identification.

The following proposition defining our first valuation
is well-known;  see ,e.g., \cite[Proposition 2.4]{Barvinok},
\cite[Proposition 5]{BerlineVergne}.

\begin{proposition}
\label{integral-valuation-prop}
There exists a unique assignment of a rational function
$\sss(K;\xx)$ lying in $\QQ(V)=\QQ(x_1,\ldots,x_n)$ to each 
polyhedral cone $K$, having the following properties:
\begin{enumerate}
\item[(i)] $\sss(K;\xx)=0$ when $K$ is not pointed.
\item[(ii)] $\sss(K;\xx)=0$ when $K$ is not full-dimensional.
\item[(iii)] When $K$ is pointed and full-dimensional, 
for each $\xx$ in the dual cone $K^*$ the improper integral 
$\int_K e^{-\langle \xx, v \rangle} dv$ converges, to the value 
given by the rational function $\sss(K;\xx)$.
\item[(iv)] When $K$ is pointed and full-dimensional,
with extreme rays spanned by $\{u_1,\ldots,u_N\}$,
the rational function $\sss(K;\xx)$ can be written with smallest
denominator
$
\prod_{i=1}^N \langle \xx, u_i \rangle.
$
\item[(v)] In particular, when $K$ is full-dimensional 
and simplicial, with extreme rays spanned by $\{u_1,\ldots,u_n\}$, then
$$
\sss(K;\xx) = 
  \frac{ | \det [ u_1,\ldots,u_n ] | }
       { \prod_{i=1}^n \langle \xx, u_i \rangle }.
$$
\item[(vi)] The map $\sss(-;\xx)$ is a {\it solid valuation}, that is,
if there is a linear relation $\sum_{i=1}^t c_i \chi_{K_i} = 0$
among the characteristic functions $\chi_{K_i}$ of the cones $K_i$,
there will be a linear relation 
$$
\sum_{i: \dim_\RR K_i = n } c_i \sss(K_i;\xx) = 0.
$$
\end{enumerate}
\end{proposition}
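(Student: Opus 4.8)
I would first observe that uniqueness is immediate: property (iii) forces the value of $\sss(K;\xx)$ on every pointed full-dimensional cone $K$, since there $\sss(K;\xx)$ must agree with the convergent integral $\int_{K}e^{-\langle\xx,v\rangle}\,dv$ on the nonempty open set $\mathrm{int}(K^{*})$, and two rational functions that agree on a nonempty open set coincide; on all remaining cones (i) and (ii) force $\sss(K;\xx)=0$. So the whole content is existence, and in particular the valuation property (vi). The plan is to construct $\sss$ from the simplicial case and then verify that it is a valuation.

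\textbf{Simplicial cones and triangulation.} If $K=\RR_{+}\{u_{1},\dots,u_{n}\}$ is simplicial and full-dimensional, the linear change of variables $v=\sum_{i}t_{i}u_{i}$ has constant Jacobian $|\det[u_{1},\dots,u_{n}]|$ and carries $\RR_{+}^{n}$ onto $K$, so for any $\xx$ with all $\langle\xx,u_{i}\rangle>0$ the integral $\int_{K}e^{-\langle\xx,v\rangle}\,dv$ factors into one-dimensional integrals and equals $|\det[u_{1},\dots,u_{n}]|/\prod_{i}\langle\xx,u_{i}\rangle$, giving (iii), (iv) and (v) in this case. For a general pointed full-dimensional cone $K$ with extreme rays spanned by $u_{1},\dots,u_{N}$, I would triangulate $K$ into full-dimensional simplicial cones $\sigma_{1},\dots,\sigma_{m}$, each spanned by a subset of $\{u_{1},\dots,u_{N}\}$ and with pairwise intersections of dimension $<n$. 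Those intersections have Lebesgue measure zero, so additivity of the integral gives, on the nonempty open cone $\mathrm{int}(K^{*})\subseteq\bigcap_{j}\mathrm{int}(\sigma_{j}^{*})$, the identity $\int_{K}e^{-\langle\xx,v\rangle}\,dv=\sum_{j}\sss(\sigma_{j};\xx)$, whose right side is a rational function with denominator dividing $\prod_{i=1}^{N}\langle\xx,u_{i}\rangle$. Declaring $\sss(K;\xx)$ to be this rational function establishes (iii) and the displayed form in (iv), and it is independent of the chosen triangulation because the left side is. To see that no factor $\langle\xx,u_{i}\rangle$ cancels (the ``smallest denominator'' claim), let $\xx$ approach a generic point of the facet $\{\langle\xx,u_{i}\rangle=0\}$ of $K^{*}$ from inside $K^{*}$: each summand $\sss(\sigma_{j};\xx)$ with $u_{i}\in\sigma_{j}$ is positive and blows up, while the others stay bounded, so $\sss(K;\xx)$ really has a pole there. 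Finally, set $\sss(K;\xx):=0$ when $K$ is not pointed or not full-dimensional; this is (i) and (ii).

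\textbf{The valuation property.} Since $\sss(K;\xx)=0$ whenever $\dim_{\RR}K<n$, property (vi) amounts to the statement that $K\mapsto\sss(K;\xx)$ extends to an $\RR$-linear map on the span of all cone indicators. By the standard structure of the cone algebra, every linear relation among cone indicators is a consequence of the elementary relations $\chi_{A\cup B}+\chi_{A\cap B}=\chi_{A}+\chi_{B}$, valid whenever $A$, $B$ and $A\cup B$ are polyhedral cones, so it suffices to check that $\sss$ respects each of these. If all four of $A$, $B$, $A\cup B$, $A\cap B$ are pointed, a short case analysis suffices: when $A\cup B$ is not full-dimensional all four have $\sss=0$; when $A\cup B$, $A$ and $B$ are all full-dimensional, the nonempty open set $\mathrm{int}((A\cup B)^{*})$ lies inside the interiors of the dual cones of all four, so integrating the relation over it and using the construction of $\sss$ above identifies the two sides as analytic functions, hence as rational functions; and in the only remaining subcase one of $A\subseteq B$, $B\subseteq A$ holds and the relation is trivial. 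If $A\cup B$ contains a line, I would pick a hyperplane $H$ through $0$ transverse to that line and apply $\chi_{C}=\chi_{C\cap H^{+}}+\chi_{C\cap H^{-}}-\chi_{C\cap H}$ to each of $A$, $B$, $A\cup B$, $A\cap B$; this rewrites the elementary relation as a sum of elementary relations among cones whose lineality spaces have smaller dimension, and an induction on that dimension, with base case the pointed case just treated, completes the verification.

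\textbf{Main obstacle.} The only genuine difficulty is the valuation property (vi), and within it the case of cones containing a line: there is no common domain on which the defining integrals converge, so one cannot simply integrate a relation, and the convention $\sss=0$ on such cones must be shown consistent through the hyperplane-slicing induction above. This is exactly the point resolved in the standard treatments of exponential valuations on polyhedra (\cite[Proposition 2.4]{Barvinok}, \cite[Proposition 5]{BerlineVergne}), to which one could alternatively defer outright.
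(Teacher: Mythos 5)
The paper itself offers no proof of this proposition: it is stated as well known, with citations to Barvinok and to Berline--Vergne, so there is nothing internal to compare your argument against. Judged on its own, your construction is essentially the standard one from those references and is sound: uniqueness from (i)--(iii); the explicit computation for full-dimensional simplicial cones by the change of variables $v=\sum_i t_iu_i$; the definition of $\sss(K;\xx)$ for general pointed full-dimensional $K$ via a triangulation into simplicial cones spanned by extreme rays, with independence of the triangulation and properties (iii)--(iv) read off from the convergent integral on $\mathrm{int}(K^*)$; and your pole argument for the ``smallest denominator'' claim (all diverging summands diverge to $+\infty$ near a generic point of the facet $K^*\cap u_i^\perp$, so no cancellation can occur) is correct. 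The one step you should flag as a citation rather than as obvious is the reduction of (vi) to the elementary relations $\chi_{A\cup B}+\chi_{A\cap B}=\chi_A+\chi_B$: the assertion that every linear relation among cone indicators is generated by these is the Volland--Groemer extension theorem, a genuine (if standard) theorem whose proof is itself an induction of the hyperplane-slicing type you then use for the non-pointed case. Granting that, your case analysis for pointed $A\cup B$ and your induction on the dimension of the lineality space of $A\cup B$ (slicing by a hyperplane transverse to a line it contains, which strictly decreases that dimension for all four cones simultaneously) correctly completes the verification; in particular your observation that when $A\cup B$ is full-dimensional but, say, $B$ is not, one necessarily has $B\subseteq A$, is the right way to dispose of the residual subcase.
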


\subsection{The semigroup ring and its Hilbert series}
\label{semigroup-ring-section}
Now endow the $n$-dimensional real vector space $V$ with a distinguished
lattice $L$ of rank $n$, and assume that the chosen basis $v_1,\ldots,v_n$
for $V$ is also a $\ZZ$-basis for $L$.  

Say that the polyhedral cone $K$ is {\it rational}
with respect to $L$ if one can express 
$K=\RR_+\{u_1,\ldots,u_N\}$ for some elements
$u_i$ in $L$.  The subset $K \cap L$ together with its additive
structure inherited from addition of vectors in $V$ is then called an {\it affine semigroup}.
Our goal here is to describe how one can approach the computation
of the previous valuation $\sss(K;\xx)$ for pointed cones $K$ through the calculation
of the finely graded {\it Hilbert series} for this affine semigroup:
$$
\Hilb(K \cap L;\xx) := \sum_{ v \in K \cap L\ } e^{\langle \xx, v \rangle }.
$$
One should clarify how to interpret this infinite series, as it lives in several 
ambient algebraic objects.  Firstly, it lies in the abelian group $\ZZ\{\{L\}\}$
of all formal combinations 
$$
\sum_{v \in L} c_v \,\, e^{\langle \xx, v \rangle }
$$
with $c_v$ in $\ZZ$, in which there are 
no restrictions on vanishing of the coefficients $c_v$.  This set $\ZZ\{\{L\}\}$
forms an abelian group under addition, but is not a ring.  However it contains the
{\it Laurent polynomial ring} 
$$
\ZZ[L] \cong  \ZZ[X_1^{\pm 1},\ldots,X_n^{\pm n}]
$$
as the subgroup where only finitely many of the $c_v$ are allowed to be
nonzero, 
using the identification via the exponential change of variables
\begin{equation}
\label{exponential-change-of-variables}
X_i =e^{\langle \xx,v_i \rangle},\text{ so that }
X_1^{c_1}\cdots X_n^{c_n}= X^v = e^{\langle \xx, v \rangle} \text{ if  }v:=\sum_{i=1}^n c_i v_i.
\end{equation}
Furthermore, $\ZZ\{\{L\}\}$ forms a module over this subring $\ZZ[L]$.
One can also define the $\ZZ[L]$-submodule of
{\it summable} elements (see \cite[Definition 8.3.9]{MillerSturmfels}), namely those $f$ in $\ZZ\{\{L\}\}$ for which there exists
$p,q$ in $\ZZ[L]$ with $q \neq 0$ and $q\cdot f = p$.
In this situation, say that $f$ {\it sums to} $\frac{p}{q}$ as an element of the fraction
field 
$$
\QQ(L) \cong \QQ(X_1,\ldots,X_n).
$$
General theory of affine semigroups (see, e.g., \cite[Chapter 8]{MillerSturmfels}) says that
for a rational polyhedral cone $K$ and the semigroup $K \cap L$,
the Hilbert series $\Hilb(K \cap L;\xx)$ is always summable.  More precisely,
\begin{enumerate}
\item[$\bullet$]
when $K$ is not pointed, 
$\Hilb(K \cap L;\xx)$ sums to zero.  This is because $K$ will not only
contain a line, but also an $L$-rational line, and then
any nonzero vector $v$ of $L$ lying on this line will 
have $(1-e^{\langle \xx,v \rangle}) \cdot \Hilb(K \cap L;\xx)=0$.
\item[$\bullet$]
when $K$ is pointed and $\{u_1,\dots,u_N\}$ are vectors in $L$ that
span its extreme rays, then one can show that 
$$
\left( \prod_{i=1}^N (1-e^{\langle \xx, u_i \rangle}) \right)
\cdot \Hilb(K \cap L;\xx)
$$
always lies in $\ZZ[L]$.
\end{enumerate}

In fact, one has the following analogue of Proposition~\ref{integral-valuation-prop};
see, e.g., \cite[Proposition 4.4]{Barvinok}, \cite[Theorem 3.1]{BarvinokPommersheim},
\cite[Proposition 7]{BerlineVergne}.

\begin{proposition}
\label{integer-points-valuation-prop}
Let $V$ be an $n$-dimensional vector space $V$.
Let $L$ be the sublattice in $V$ with $\ZZ$-basis $v_1,\ldots,v_n$, and
$V^*$ the dual space, with dual basis $x_1,\ldots,x_n$.

Then there exists a well-defined and 
unique assignment of a rational
function $\HHH(K;\XX)$ lying in $\QQ(X_1,\ldots,X_n)$
to each $L$-rational polyhedral cone $K$,
having the following properties:

\begin{enumerate}

\item[(i)] $\HHH(K;\XX)=0$ when $K$ is not pointed.

\item[(ii)] When $K$ is pointed, 
the Hilbert series $\Hilb(K \cap L;\xx)$ sums to the element
$\frac{p}{q}=\HHH(K;\XX)$, considered as a rational function lying in
$\QQ(L)$.

\item[(iii)] When $K$ is pointed and full-dimensional, 
for each $\xx$ in the dual cone $K^*$ the infinite sum
$\sum_{ v \in K \cap L\ } e^{\langle \xx, v \rangle }$
converges, to the value given by the exponential substitution 
\eqref{exponential-change-of-variables}
into the rational function $\HHH(K;\XX)$

\item[(iv)] When $K$ is pointed and full-dimensional,
with $\uu=\{u_1,\ldots,u_N\}$
the unique primitive vectors (that is, those lying in $L$ nearest the origin) that span its extreme rays,
the rational function $\HHH(K;\XX)$ can be written with smallest 
denominator
$
\prod_{i=1}^N (1-X^{u_i}).
$

\item[(v)] In particular, if $K$ is simplicial and $\uu:=\{u_1,\ldots,u_d\}$ its set of primitive vectors that span its extreme rays,
define the semi-open parallelepiped
$$
\Pi_{\uu}:=\left\{ \sum_{i=1}^n c_i u_i: 0 \leq c_i < 1 \right\} \subset V.
$$
Then one has
\begin{equation}
\label{simplicial-H-formula}
\HHH(K;\XX) = 
  \frac{ \sum_{u \in \Pi_{\uu} \cap L } X^u }
       { \prod_{i=1}^d (1-X^{u_i}) }.
\end{equation}
\item[(vi)] The map $\HHH(- ;\XX)$ is a valuation:
if there is a linear relation $\sum_{i=1}^t c_i \chi_{K_i} = 0$
among the characteristic functions $\chi_{K_i}$ of a collection of
($L$-rational) cones $K_i$, there will be a linear relation 
$$
\sum_{i=1}^t c_i \HHH(K_i;\XX) = 0.
$$
\end{enumerate}
\end{proposition}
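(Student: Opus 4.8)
The plan is to argue in parallel with Proposition~\ref{integral-valuation-prop}, using the Hilbert series in place of the integral. I would \emph{define} $\HHH(K;\XX)$ to be the rational function to which $\Hilb(K\cap L;\xx)$ sums. This is legitimate because, as recalled in Section~\ref{semigroup-ring-section}, the general theory of affine semigroups guarantees that $\Hilb(K\cap L;\xx)$ is summable for every $L$-rational cone $K$; and it is well-defined because the rational function to which a summable element sums is unique: if $q\cdot f=p$ and $q'\cdot f=p'$ with $p,p',q,q'$ in $\ZZ[L]$ and $q,q'\neq 0$, then $q'p=(q'q)\cdot f=qp'$ in the integral domain $\ZZ[L]$, hence $p/q=p'/q'$ in $\QQ(L)$. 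With this definition (ii) holds by construction, and (i) is the remark from Section~\ref{semigroup-ring-section} that $\Hilb(K\cap L;\xx)$ sums to $0$ precisely when $K$ contains an $L$-rational line, i.e.\ when $K$ is not pointed. Since (i) and (ii) already determine the assignment, this also yields the uniqueness claim; it then remains to verify (iii)--(vi) for this $\HHH$.

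The computational core is (v). Let $K$ be simplicial with $\uu=\{u_1,\dots,u_d\}$ the primitive vectors spanning its extreme rays. The standard half-open tiling of a simplicial cone (see, e.g., \cite[\S4.6]{Stanley-EC1}) states that every $v$ in $K\cap L$ has a unique expression $v=u+\sum_{i=1}^{d}k_i u_i$ with $u$ in the finite set $\Pi_{\uu}\cap L$ and the $k_i$ nonnegative integers. Grouping the monomials of $\Hilb(K\cap L;\xx)=\sum_{v\in K\cap L}X^{v}$ according to $u$ and summing the geometric series in each variable $X^{u_i}$ gives
\[
\Bigl(\textstyle\prod_{i=1}^{d}(1-X^{u_i})\Bigr)\cdot\Hilb(K\cap L;\xx)\;=\;\sum_{u\in\Pi_{\uu}\cap L}X^{u}\qquad\text{in }\ZZ[L],
\]
which reconfirms summability in the simplicial case and, after dividing, yields \eqref{simplicial-H-formula}.

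Properties (iii) and (vi) are then essentially formal. For (iii): on the open region of those $\xx$ with $\langle\xx,v\rangle<0$ for every nonzero $v$ in $K$, the series $\sum_{v\in K\cap L}e^{\langle\xx,v\rangle}$ converges absolutely (compare, after triangulating $K$ into simplicial cones, with the simplicial case handled above); by the fact recalled in Section~\ref{semigroup-ring-section}, $\bigl(\prod_{i=1}^{N}(1-X^{u_i})\bigr)\cdot\Hilb(K\cap L;\xx)$ is a Laurent polynomial, where $u_1,\dots,u_N$ are the primitive extreme-ray vectors, so the substitution \eqref{exponential-change-of-variables} may be performed term by term in that identity; since $\prod_{i=1}^{N}(1-X^{u_i})$ does not specialize to $0$ on the region, the value of the sum equals the specialization of $\HHH(K;\XX)$. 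For (vi): a relation $\sum_i c_i\chi_{K_i}=0$ among indicator functions on $V$ restricts to $\sum_i c_i\chi_{K_i\cap L}=0$ on $L$, i.e.\ $\sum_i c_i\Hilb(K_i\cap L;\xx)=0$ in $\ZZ\{\{L\}\}$; choosing $q_i\neq 0$ in $\ZZ[L]$ with $q_i\Hilb(K_i\cap L;\xx)=p_i$ in $\ZZ[L]$ and multiplying by $Q:=\prod_i q_i$ gives $\sum_i c_i(Q/q_i)p_i=0$ in $\ZZ[L]$, whence $\sum_i c_i\HHH(K_i;\XX)=0$ after dividing by $Q$ in $\QQ(L)$.

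For (iv), the same fact from Section~\ref{semigroup-ring-section} already shows $\HHH(K;\XX)$ can be written with denominator $\prod_{i=1}^{N}(1-X^{u_i})$; the real content is the minimality, namely that no factor $1-X^{u_i}$ can be cancelled from the numerator. I expect this to be the main obstacle. The approach would be to show that $\HHH(K;\XX)$ genuinely has a pole along each hypersurface $X^{u_i}=1$ coming from an extreme ray: one isolates within $\Hilb(K\cap L;\xx)$ the contribution of the lattice points of $K$ lying on, and in a thin transverse neighborhood of, the extreme ray $\RR_+u_i$ --- which already contributes a simple pole at $X^{u_i}=1$ --- and argues that the remaining contributions are regular there, so the pole survives. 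Alternatively, one may simply quote this minimality from \cite[Proposition 4.4]{Barvinok} or \cite[Proposition 7]{BerlineVergne}. All other steps above are standard polyhedral geometry or formal manipulation of summable series, in parallel with the treatment of $\sss(K;\xx)$ in Proposition~\ref{integral-valuation-prop}.
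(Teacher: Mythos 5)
The paper offers no proof of Proposition~\ref{integer-points-valuation-prop}: it is stated as a known analogue of Proposition~\ref{integral-valuation-prop}, with pointers to \cite{Barvinok}, \cite{BarvinokPommersheim} and \cite{BerlineVergne}. Your reconstruction is the standard argument behind those references and is essentially correct: defining $\HHH(K;\XX)$ as the element of $\QQ(L)$ to which $\Hilb(K\cap L;\xx)$ sums (unique by your integral-domain observation) makes (i), (ii) and the uniqueness claim immediate; the half-open decomposition of a simplicial cone gives (v); and your treatment of (vi) by restricting the indicator relation to $L$ and clearing denominators, and of (iii) by termwise substitution into the polynomial identity $q\cdot\Hilb=p$, is sound. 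You also correctly isolate the one step with genuine content, the minimality of the denominator in (iv); quoting it is defensible since the paper quotes the entire proposition, but if you want it self-contained, here is a short route in the full-dimensional pointed case to which (iv) applies: if the factor $1-X^{u_j}$ could be cancelled, then after the substitution \eqref{exponential-change-of-variables} the function $\HHH(K;\XX)$ would be analytic divided by $\prod_{i\neq j}(1-e^{\langle \xx,u_i\rangle})$, so its degree $-n$ Laurent coefficient --- which by the result of Section~\ref{total-residue-section} equals $(-1)^n\sss(K;\xx)$ --- could be written over the denominator $\prod_{i\neq j}\langle \xx,u_i\rangle$, contradicting Proposition~\ref{integral-valuation-prop}(iv) because the linear forms $\langle \xx,u_i\rangle$ attached to distinct extreme rays are pairwise non-proportional. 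One further remark: part (iii) as printed cannot hold literally, since for $\xx\in K^*$ one has $\langle \xx,v\rangle\geq 0$ on $K$ and the series $\sum_{v\in K\cap L}e^{\langle \xx,v\rangle}$ diverges; convergence requires $\langle \xx,v\rangle<0$ on $K\setminus\{0\}$, i.e.\ $-\xx$ in the interior of $K^*$, which is exactly the region your argument uses, so your proof quietly repairs a sign in the statement.
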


\subsection{Why $\HHH(K;\XX)$ is finer than $\sss(K;\xx)$}
\label{total-residue-section}

When $K$ is an $L$-rational cone, there is a well-known way (see, e.g., \cite{BrionVergneI})
to compute the Laplace transform valuation $\sss(K;\xx)$ from the Hilbert series valuation
$\HHH(K;\XX)$ by a certain linear {\it residue} operation,
which we now explain.

\begin{proposition}
Let $K$ be an $L$-rational pointed cone, with $\{u_1,\ldots,u_N\}$ vectors
in $L$ that span its extreme rays.
Regard $\HHH(K;\XX)$ as a function
of the variables $\xx=(x_1,\ldots,x_n)$ via the exponential substitution
\eqref{exponential-change-of-variables}.

Then $\HHH(K;\XX)$ is meromorphic in $\xx$, of the form
$$
\HHH(K;\XX)=\frac{h(K;\xx)}{\prod_{i=1}^N \langle \xx, u_i \rangle}
$$
where $h(K;\xx)$ is analytic in $\xx$.  

Furthermore, if $d:=\dim_\RR K$, then the multivariate Taylor expansion 
for $h(K;\xx)$ starts in degree $N-d$, that is,
$$
h(K;\xx)= h_{N-d}(K;\xx) + h_{N-d+1}(K;\xx) + \cdots.
$$
where $h_i(K;\xx)$ are homogeneous polynomials of degree $i$,
and the multivariate Laurent expansion for $\HHH(K;\XX)$ starts 
in degree $-d$, that is,
$$
\HHH(K;\XX)= \HHH_{-d}(\xx) + \HHH_{-d+1}(\xx) + \HHH_{-d+2}(\xx) + \cdots.
$$

Lastly, when $K$ is full-dimensional (so $d=n$), then
$$
\sss(K;\xx)=(-1)^n \frac{h_{N-n}(K;\xx)}{\prod_{i=1}^N \langle \xx, u_i \rangle}
)=(-1)^n \HHH_{-n}(\xx)
$$
so that $h_{N-n}(K;\xx)$ is $(-1)^n$ times the numerator for $\sss(K;\xx)$ accompanying
the smallest denominator described in Proposition~\ref{integral-valuation-prop}(iv).
\end{proposition}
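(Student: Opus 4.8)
The plan is to exploit the explicit simplicial formula in Proposition~\ref{integer-points-valuation-prop}(v), combined with the valuation property (vi), to reduce everything to a direct computation for simplicial cones, and then transport the conclusion to arbitrary pointed cones via a triangulation. First I would record the simplicial base case: if $K$ is full-dimensional and simplicial with primitive ray generators $u_1,\ldots,u_n$, then $\HHH(K;\XX) = \left(\sum_{u \in \Pi_{\uu} \cap L} X^u\right) / \prod_{i=1}^n (1-X^{u_i})$. Applying the exponential substitution $X^{u_i} = e^{\langle \xx, u_i\rangle}$ and expanding $1 - e^{\langle \xx, u_i\rangle} = -\langle \xx, u_i\rangle\bigl(1 + \tfrac12\langle\xx,u_i\rangle + \cdots\bigr)$, one sees $\prod_{i=1}^n (1-X^{u_i}) = (-1)^n \prod_{i=1}^n \langle\xx,u_i\rangle \cdot (1 + O(\xx))$, while the numerator $\sum_{u\in\Pi_{\uu}\cap L} X^u$ is analytic in $\xx$ with constant term equal to the number of lattice points $|\Pi_{\uu}\cap L| = |\det[u_1,\ldots,u_n]|$. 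Hence $h(K;\xx) = (-1)^n \prod\langle\xx,u_i\rangle \cdot \HHH(K;\XX)$ is analytic, starts in degree $N - n = 0$, and its degree-zero term $h_0(K;\xx)$ is the constant $|\det[u_1,\ldots,u_n]|$. Dividing, $(-1)^n h_0(K;\xx)/\prod\langle\xx,u_i\rangle = |\det[u_1,\ldots,u_n]|/\prod\langle\xx,u_i\rangle = \sss(K;\xx)$ by Proposition~\ref{integral-valuation-prop}(v), and this is visibly $(-1)^n\HHH_{-n}(\xx)$ since it is the unique term of lowest total degree $-n$ in the Laurent expansion. This settles the last assertion for simplicial $K$.

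For a general full-dimensional pointed cone $K$, I would choose a triangulation of $K$ into full-dimensional simplicial cones $K_1,\ldots,K_m$ using (subsets of) the rays spanned by $u_1,\ldots,u_N$, so that $\chi_K = \sum_j c_j \chi_{K_j} + (\text{lower-dimensional terms})$ with signs $c_j = \pm 1$; by inclusion–exclusion on the faces, $\chi_K$ differs from $\sum_j \chi_{K_j}$ only by characteristic functions of cones of dimension $< n$. Proposition~\ref{integer-points-valuation-prop}(vi) then gives $\HHH(K;\XX) = \sum_j \epsilon_j \HHH(K_j;\XX) + (\text{terms from lower-dimensional cones})$, and likewise Proposition~\ref{integral-valuation-prop}(vi) — being a \emph{solid} valuation — gives $\sss(K;\xx) = \sum_j \epsilon_j \sss(K_j;\xx)$, the lower-dimensional pieces contributing zero. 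Here I must check that the extra lower-dimensional cones $K'$ appearing in the triangulation identity contribute only to the \emph{strictly higher} order terms of the Laurent expansion of $\HHH$: indeed each such $K'$ has $\dim_\RR K' = d' < n$, so by the part of the proposition already proved (the meromorphy and order statement, which is the genuinely standard input I would cite from \cite{BrionVergneI}) the Laurent expansion of $\HHH(K';\XX)$ starts in degree $-d' > -n$. Therefore, extracting the degree $-n$ term from the identity $\HHH(K;\XX) = \sum_j \epsilon_j \HHH(K_j;\XX) + \sum_{K'}(\cdots)$ kills all the $K'$ contributions and yields $\HHH_{-n}(\xx) = \sum_j \epsilon_j (-1)^n \sss(K_j;\xx) = (-1)^n \sss(K;\xx)$, which is the claim. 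The representation $\sss(K;\xx) = (-1)^n h_{N-n}(K;\xx)/\prod_{i=1}^N\langle\xx,u_i\rangle$ follows by clearing the common denominator $\prod_{i=1}^N \langle\xx,u_i\rangle$, noting that each $\sss(K_j;\xx)$ has denominator dividing this product.

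The first two assertions — that $h(K;\xx)$ is analytic with Taylor expansion starting in degree $N-d$, and that $\HHH(K;\XX)$ has Laurent expansion starting in degree $-d$ — I would treat as the standard structural input: after the simplicial case is in hand, both follow from the same triangulation argument together with the observation that multiplying $\HHH(K;\XX)$ by $\prod_{i=1}^N(1-e^{\langle\xx,u_i\rangle})$ clears all poles (this is exactly the statement bulleted in Section~\ref{semigroup-ring-section} that this product times the Hilbert series lies in $\ZZ[L]$), and that $\prod_{i=1}^N(1-e^{\langle\xx,u_i\rangle})$ vanishes to order exactly $N$ while $\HHH(K;\XX)$, being a sum over a $d$-dimensional cone, can have a pole of order at most $d$. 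I expect the main obstacle to be bookkeeping the triangulation cleanly: one wants a triangulation (or a signed decomposition à la Brion) of $\chi_K$ using \emph{only} rays among the given $u_i$, so that the denominators match up exactly and the lower-dimensional correction terms are genuinely of dimension $< n$; handling the boundary faces carefully — e.g. via a generic perturbation or a "half-open" decomposition — is the technical heart, though it is entirely standard and I would simply invoke the relevant lemma from \cite{Barvinok} or \cite{BrionVergneI} rather than reprove it.
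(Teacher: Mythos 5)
Your proposal is correct and follows essentially the same route as the paper: establish the simplicial case by direct expansion of the formula in Proposition~\ref{integer-points-valuation-prop}(v) (with constant term $|\Pi_{\uu}\cap L|=|\det[u_1,\ldots,u_n]|$), then triangulate a general pointed cone into simplicial subcones using only its own extreme rays, apply the valuation properties of $\HHH$ and the solid valuation property of $\sss$, and observe that the lower-dimensional pieces cannot contribute to the degree $-n$ term. The only cosmetic difference is that the paper derives the analyticity and the order bound for $h(K;\xx)$ by explicitly clearing denominators in the triangulation identity rather than by appealing to the pole-order heuristic, but your fallback to the same triangulation argument covers this.
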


\begin{proof}
We first check all of the assertions when $K$ is simplicial, 
say with extreme rays spanned by the vectors $u_1,\ldots,u_d$ in $L$.
In this case, $N=d$ and the exponential substitution of variables
\eqref{exponential-change-of-variables} into \eqref{simplicial-H-formula}
gives
\begin{equation}
\label{simplicial-H-expansion}
\HHH(K;\XX) 
  = \frac{ \sum_{u \in \Pi_{\uu}} e^{\langle \xx, u \rangle} }
                         { \prod_{i=1}^d (1-e^{\langle \xx, u_i \rangle}) } 
  = (-1)^d \frac{\sum_{u \in \Pi_{\uu}} e^{\langle \xx, u \rangle}}
        {\prod_{i=1}^d \langle \xx, u_i \rangle }
                       \prod_{i=1}^d \frac{\langle \xx, u_i \rangle}
                                      { e^{\langle \xx, u_i \rangle}-1 }.
\end{equation}
We wish to be somewhat explicit about the Taylor expansion of each factor 
in the last product within \eqref{simplicial-H-expansion}.  To this end, recall that the function 
$$
\frac{x}{e^x-1} =  \sum_{n \geq 0} B_n \frac{x^n}{n!} 
= 1-\frac{1}{2} x + \frac{1}{12}x^2 - \frac{1}{720}x^4 + \cdots
$$ 
is analytic in the variable $x$, having power series coefficients described by
the {\it Bernoulli numbers} $B_n$.
Consequently, for each $i=1,2,\ldots,d$ the factor 
$
\frac{\langle \xx, u_i \rangle}{e^{\langle \xx, u_i \rangle}-1}
$
appearing in \eqref{simplicial-H-expansion} is analytic in the variables
$\xx=(x_1,\ldots,x_n)$, and has power series expansion that begins with
constant term $+1$.  Note that the sum 
$$
\sum_{u \in \Pi_{\uu}} e^{\langle \xx, u \rangle}
\sum_{u \in \Pi_{\uu}} \left( 1+{\langle \xx, u \rangle}
                         +\frac{1}{2}{\langle \xx, u \rangle}^2+\cdots \right)
$$
is also analytic in $\xx$, having power series expansion that begins with the
constant term $|\Pi_{\uu}|$.  
Thus the expansion in \eqref{simplicial-H-expansion} begins in degree $-d$ with
$$
 (-1)^d \frac{|\Pi_{\uu}|}{\prod_{i=1}^d \langle \xx, u_i \rangle }.
$$
Whenever $K$ is full-dimensional, so that $d=n$, expressing the $u_i$ in coordinates with
respect to a $\ZZ$-basis $e_1,\ldots,e_n$ for $L$,
one has $|\Pi_{\uu}|=|\det(u_1,\ldots,u_n)|$.
Comparison with Proposition~\ref{integral-valuation-prop}(v) then shows that
the proposition is correct when $K$ is simplicial.

When $K$ is pointed but not simplicial, it is well-known 
(see, e.g., \cite[Lemma 4.6.1]{Stanley-EC1}) 
that one can triangulate $K$ as a complex of simplicial subcones $K_1,\ldots,K_t$
whose extreme rays are all among the extreme
rays $u_1,\ldots,u_N$ for $K$.  This triangulation lets one express the characteristic
function $\chi_K$ in the form (cf. \cite[Lemma 4.6.4]{Stanley-EC1}) 
$
\chi_K = \sum_{j} c_j \chi_{K_j}
$
where the $c_j$ are integers, and $c_j=+1$ whenever the cone $K_j$
has the same dimension as $K$.  
Thus by Proposition\ref{integral-valuation-prop}(vi), one has
$$
\HHH(K;\XX) = \sum_{j} c_i \HHH(K_j;\XX),
$$
which shows that 
$
h(K;\xx):= \left( \prod_{i=1}^N \langle \xx, u_i \rangle \right) \HHH(K;\XX)
$
is analytic in $\xx$.  Furthermore after clearing denominators, it gives the expansion
$$
h(K;\xx)
 = \sum_{j} c_j 
   \left(
     \prod_{ \substack{ i: u_i \text{ a ray of }K,\\ 
             \text{but not of }K_j} }  
             \langle \xx, u_i \rangle 
    \right)
    h(K_j;\xx).
$$
Since the simplicial cones $K_j$ have at most $n$ extreme rays, this shows
$h_i(K;\xx)=0$ for $i < N-n$, and that
$$
h_{N-n}(K;\xx)
 = \sum_{j: \dim_\RR K_j = n}
   \left(
     \prod_{ \substack{ i: u_i \text{ a ray of }K,\\ 
             \text{but not of }K_j} }  
             \langle \xx, u_i \rangle 
    \right)
    h_0(K_j;\xx),
$$
using the fact that $c_j=+1$ whenever $\dim_\RR K_j=\dim_\RR K$.
Dividing through by $\prod_{ i=1 }^N \langle \xx, u_i \rangle $, and multiplying
by $(-1)^n$ gives
$$
(-1)^n \frac{h_{N-n}(K;\xx)}{\prod_{ i=1 }^N \langle \xx, u_i \rangle}
 = \sum_{j: \dim_\RR K_j = n} \sss(K_j;\xx) = \sss(K;\xx)
$$
where the first equality uses the simplicial case already proven,
and the last equality uses Proposition~\ref{integral-valuation-prop}(v).
\end{proof}

The linear operator passing from the meromorphic function
$\HHH(K;\XX)$ of $\xx$ to the rational function $\HHH_{-n}(K;\xx)=(-1)^n \sss(K;\xx)$
has been called taking the {\it total residue} in \cite{BrionVergneI}, where other
methods for computing it are also developed.

\subsection{Complete intersections}
\label{complete-intersection-section}

For a pointed $L$-rational polyhedral cone $K$, one 
approach to computing $\HHH(K;\xx)$ (and hence $\sss(K;\xx)$) is
through an algebraic analysis of the affine semigroup $K \cap L$
and its affine semigroup ring 
$$
R:=k[K \cap L]=k\{e^u\}_{u \in (K \cap L)}
$$ 
over some coefficient field $k$.
We discuss this here, with the case where $R$ is a complete intersection being
particularly simple.

For any semigroup elements $u_1,\ldots,u_m$ in $K \cap L$, one can
introduce a polynomial ring $S:=k[U_1,\ldots,U_m]$, and a ring homomorphism
$S  \longrightarrow  R$ sending $U_i \longmapsto e^{u_i}.$
This map makes $R$ into an $S$-module.  
One also has a fine $L$-multigrading on $R$
and $S$ for which $\deg(U_i)=\deg(e^{u_i})=u_i$.  This makes $R$ an $L$-graded
module over the $L$-graded ring $S$.
It is not hard to see that $R$ is a {\it finitely-generated} $S$-module if and only
if $\{u_1,\ldots,u_m\}$ contain at least one vector 
spanning each extreme ray of $K$.  


When $u_1,\ldots,u_m$ generate (not necessarily minimally) the semigroup $K \cap L$,
the map $S \rightarrow R$ is surjective, and its kernel $I$ is often called
the {\it toric ideal} for $u_1,\ldots,u_m$.  

\begin{proposition}( \cite[Theorem 7.3]{MillerSturmfels}, \cite[Lemma 4.1]{Sturmfels} )
\label{toric-ideal-generators}
One can generate the toric ideal 
$
I=\ker(S \rightarrow R)
$
by finitely many $L$-homogeneous elements chosen among the binomials
$
U^\alpha-U^\beta
$
for which $\alpha, \beta \in \NN^m$ and
$\sum_{i=1}^m \alpha_i u_i = \sum_{j=1}^m \beta_j u_j$. $\qed$
\end{proposition}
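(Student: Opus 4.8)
The plan is to exploit the fine $L$-multigrading on $S$ and $R$ together with the fact that the affine semigroup ring $R=k[K\cap L]$ has an unusually simple graded structure: its component in degree $w\in L$ is the one-dimensional space $k\cdot e^w$ if $w\in K\cap L$, and is $0$ otherwise. First I would note that $\pi\colon S\to R$, $U_i\mapsto e^{u_i}$, is a homomorphism of $L$-graded rings, since $\pi(U^\alpha)=e^{\sum_i\alpha_i u_i}$ has $L$-degree $\sum_i\alpha_i u_i=\deg(U^\alpha)$; hence $I=\ker\pi$ is an $L$-homogeneous ideal, $I=\bigoplus_{w\in L}I_w$ with $I_w=I\cap S_w$. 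Since $S=k[U_1,\ldots,U_m]$ is Noetherian and $I$ is homogeneous, $I$ is generated by finitely many $L$-homogeneous elements, so it suffices to prove that each component $I_w$ is spanned as a $k$-vector space by binomials $U^\alpha-U^\beta$ with $\alpha,\beta\in\NN^m$ and $\sum_i\alpha_i u_i=w=\sum_j\beta_j u_j$: granting this, one fixes a finite homogeneous generating set of $I$ and expresses each generator as a $k$-linear combination of such binomials in its own component, keeping the finitely many binomials that appear.

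To prove the displayed claim, fix $w\in L$ and let $U^{\alpha(1)},\ldots,U^{\alpha(t)}$ be the monomials forming the $k$-basis of $S_w$, i.e.\ those $U^\alpha$ with $\sum_i\alpha_i u_i=w$. If $t=0$ there is nothing to prove. If $t\ge 1$ then $w=\sum_i\alpha(1)_i u_i$ is a nonnegative integer combination of the $u_i\in K\cap L$, hence lies in $K\cap L$, so $R_w=k\cdot e^w$ is one-dimensional, and $\pi$ sends every $U^{\alpha(\ell)}$ to the same basis vector $e^w$. Thus $\pi|_{S_w}\colon S_w\to R_w\cong k$ is the linear map carrying each basis monomial to the same nonzero element, and its kernel $I_w$ has $k$-basis $\{\,U^{\alpha(\ell)}-U^{\alpha(1)}:2\le\ell\le t\,\}$, a set of binomials of exactly the required shape.

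This is a standard fact (\cite{MillerSturmfels, Sturmfels}) and I foresee no real obstacle; the single point deserving care --- and the actual reason binomials suffice --- is the observation that under $\pi$ a \emph{monomial} of $S$ is sent to a single $k$-basis element $e^w$ of $R$, never to $0$ and never to a longer linear combination, because $R$ is free on the semigroup $K\cap L$. This is precisely what forces $\ker\pi$ to be spanned by binomials with matching $L$-degree, the matching-degree condition being literally the equation $\sum_{i=1}^m\alpha_i u_i=\sum_{j=1}^m\beta_j u_j$ of the statement.
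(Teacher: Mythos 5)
Your proof is correct. The paper itself offers no argument for this proposition --- it is stated with a \qed and two citations --- and what you have written is precisely the standard proof from those references: the map $S\to R$ is $L$-graded, each graded piece $R_w$ of the semigroup ring is either $0$ or the line $k\cdot e^w$, so the kernel in each degree $w$ is spanned by differences of the monomials of that degree, and Noetherianity of $S$ converts this spanning statement into a finite generating set of binomials. The only point you gloss over is that $S_w$ is finite-dimensional (equivalently, that only finitely many $\alpha\in\NN^m$ satisfy $\sum_i\alpha_i u_i=w$); this holds here because the cone $K$ is pointed and the $u_i$ are nonzero, but in any case your argument does not actually need it, since an infinite spanning set of binomials for $I_w$ would serve just as well once finite generation of $I$ has been secured from Noetherianity.
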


As $R=S/I$, and because $S$ has Krull dimension $m$
while $R$ has Krull dimension $d:=\dim_\RR K$, the number of generators for the
ideal $I$ is at least $m-d$.  
The theory of Cohen-Macaulay rings says that,
 since the polynomial algebra $S$ is Cohen-Macaulay, 
whenever the ideal $I$ in $S$ can be generated
by {\it exactly} $m-d$ elements $f_1,\ldots,f_{m-d}$
then these elements must form an {\it $S$-regular sequence}:
for each $i \geq 1$, the image of $f_i$ forms a nonzero divisor in the
quotient $S/(f_1,\ldots,f_{i-1})$.  In this case, the presentation 
$R=S/I=S/(f_1,\ldots,f_{m-d})$
is said to present $R$ as a {\it complete intersection}.
A simple particular case of this
 occurs when the toric ideal $I$ is principal,
as in Example~\ref{cone-theory-example}
 and in~Corollary \ref{unicyclic-corollary}.
By a standard calculation 
using the nonzero divisor condition 
(see, e.g., \cite[\S 13.4, p. 264]{MillerSturmfels}) 
one concludes the following factorization 
for $\HHH(K;\XX)$ and $\sss(K;\xx)$.

\begin{proposition}
\label{prop:complete-intersection-consequences}
Let $K$ be a pointed $L$-rational cone for which the associated
affine semigroup ring $R=k[K \cap L]$ can be presented as a complete
intersection
$$
R=S/I=k[U_1,\ldots,U_m]/(f_1,\ldots,f_{m-d})
$$
where $U_i=e^{u_i}$ for some generators $u_1,\ldots,u_m$ of $K \cap L$,
and where $f_1,\ldots,f_{m-d}$ are $L$-homogeneous elements
of $S$ with degrees $\delta_1,\ldots,\delta_{m-d}$.  Then 
$$
\HHH(K;\XX)=\frac{\prod_{i=1}^{m-d} (1-\XX^{\delta_i})}{\prod_{j=1}^m(1-\XX^{u_j})}
$$
and if $d=n$ then
$$
\sss(K;\xx)=\frac{\prod_{i=1}^{m-n} \langle \xx, \delta_i \rangle }
                {\prod_{j=1}^m \langle \xx, u_j \rangle}. \qed
$$
\end{proposition}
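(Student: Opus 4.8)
The plan is to handle the two formulas in turn: first obtain the expression for $\HHH(K;\XX)$ by the standard regular-sequence computation of Hilbert series, then read off $\sss(K;\xx)$ as the leading Laurent coefficient of $\HHH(K;\XX)$ via the Proposition of Section~\ref{total-residue-section}.

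For $\HHH(K;\XX)$ I would carry the $L$-grading throughout the usual Hilbert-series calculation. Since the generators $u_1,\ldots,u_m$ lie in the pointed cone $K$, one may pick $\xx_0$ in the interior of $K^*$, so $\langle \xx_0,u_j\rangle>0$ for every $j$; this forces each $L$-graded piece of $S=k[U_1,\ldots,U_m]$ (hence of $R$) to be finite-dimensional, and makes $\Hilb(S;\XX)=\prod_{j=1}^m(1-\XX^{u_j})^{-1}$ an honest identity of summable formal series. Because the presentation of $R$ is a complete intersection, $f_1,\ldots,f_{m-d}$ form an $S$-regular sequence of $L$-homogeneous elements of degrees $\delta_1,\ldots,\delta_{m-d}$, so the $L$-graded Koszul complex on the $f_i$ is a finite free resolution of $R$ over $S$. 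Taking the alternating sum of Hilbert series over this bounded complex (equivalently, iterating the short exact sequences $0 \to S(-\delta_i)\xrightarrow{f_i}S \to S/(f_i)\to 0$) yields the formal identity $\left(\prod_{j=1}^m(1-\XX^{u_j})\right)\cdot\Hilb(R;\XX)=\prod_{i=1}^{m-d}(1-\XX^{\delta_i})$. On the other hand $R=k[K\cap L]$ has $k$-basis $\{e^u\}_{u\in K\cap L}$ with $\deg e^u=u$, so $\Hilb(R;\XX)$ is precisely the series $\Hilb(K\cap L;\xx)$ of Section~\ref{semigroup-ring-section}; the identity just displayed shows it is summable and sums to $\prod_i(1-\XX^{\delta_i})/\prod_j(1-\XX^{u_j})$, which by the uniqueness in Proposition~\ref{integer-points-valuation-prop}(ii) is $\HHH(K;\XX)$.

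For the second formula, assume $d=n$, so that $\sss(K;\xx)=(-1)^n\HHH_{-n}(\xx)$ by the Proposition of Section~\ref{total-residue-section}, where $\HHH_{-n}(\xx)$ is the degree $-n$ leading term of the Laurent expansion of $\HHH(K;\XX)$ under the substitution $\XX^v=e^{\langle\xx,v\rangle}$. I would write each factor as $1-e^{\langle\xx,w\rangle}=-\langle\xx,w\rangle\,g(\langle\xx,w\rangle)$, where $g(t):=(e^t-1)/t=1+\frac{t}{2}+\cdots$ is analytic with $g(0)=1$ (the Bernoulli-type expansion already used in Section~\ref{total-residue-section}); this gives
$$\HHH(K;\XX)=(-1)^{(m-n)-m}\,\frac{\prod_{i=1}^{m-n}\langle\xx,\delta_i\rangle}{\prod_{j=1}^m\langle\xx,u_j\rangle}\cdot\frac{\prod_{i=1}^{m-n}g(\langle\xx,\delta_i\rangle)}{\prod_{j=1}^m g(\langle\xx,u_j\rangle)}.$$
Here the last factor is analytic with value $1$ at the origin, $(-1)^{(m-n)-m}=(-1)^n$, and the first factor is a nonzero rational function homogeneous of degree $-n$ (each $\delta_i$ and each $u_j$ is a nonzero vector of $L$, so the corresponding linear form is not identically zero); hence $\HHH_{-n}(\xx)=(-1)^n\prod_{i=1}^{m-n}\langle\xx,\delta_i\rangle/\prod_{j=1}^m\langle\xx,u_j\rangle$, and multiplying by $(-1)^n$ yields the asserted formula for $\sss(K;\xx)$.

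The Koszul bookkeeping and the expansion of $g$ are routine. The one step that needs care — the \emph{main obstacle}, such as it is — is the passage from the algebraic Hilbert series $\Hilb(R;\XX)$, which a priori is merely a formal object in $\ZZ\{\{L\}\}$, to the rational function $\HHH(K;\XX)$: one must know that the regular-sequence manipulation is valid as an identity of summable series and not only of rational functions, which is exactly what finite-dimensionality of the $L$-graded pieces (guaranteed by $K$ being pointed) provides, as in \cite[\S 13.4]{MillerSturmfels}.
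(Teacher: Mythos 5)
Your proposal is correct and follows exactly the route the paper intends: the paper derives this proposition by citing the ``standard calculation using the nonzero divisor condition'' of \cite[\S 13.4]{MillerSturmfels} for the Hilbert series, and then reads off $\sss(K;\xx)$ as $(-1)^n$ times the degree $-n$ Laurent coefficient via the total-residue proposition of Section~\ref{total-residue-section}; your Koszul/regular-sequence computation and the factorization $1-e^{\langle\xx,w\rangle}=-\langle\xx,w\rangle\,g(\langle\xx,w\rangle)$ are precisely that argument spelled out. Your attention to summability (finite-dimensional $L$-graded pieces from pointedness of $K$) is the right place to be careful, and the sign bookkeeping $(-1)^{(m-n)-m}=(-1)^n$ checks out.
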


\begin{example}
\label{cone-theory-example}
Let $V=\RR^3$ with standard basis $e_1,e_2,e_3$
and let $K$ be the full-dimensional, pointed cone whose extreme
rays are generated by the four vectors
$$
\begin{array}{rccl}
u_1 &=e_1 &      &\\
u_2 &=e_1 &+e_2&\\
u_3 &=e_1 &      &+e_3\\
u_4 &=e_1 &+e_2 &+e_3.
\end{array}
$$
$$
\includegraphics[width=300pt]{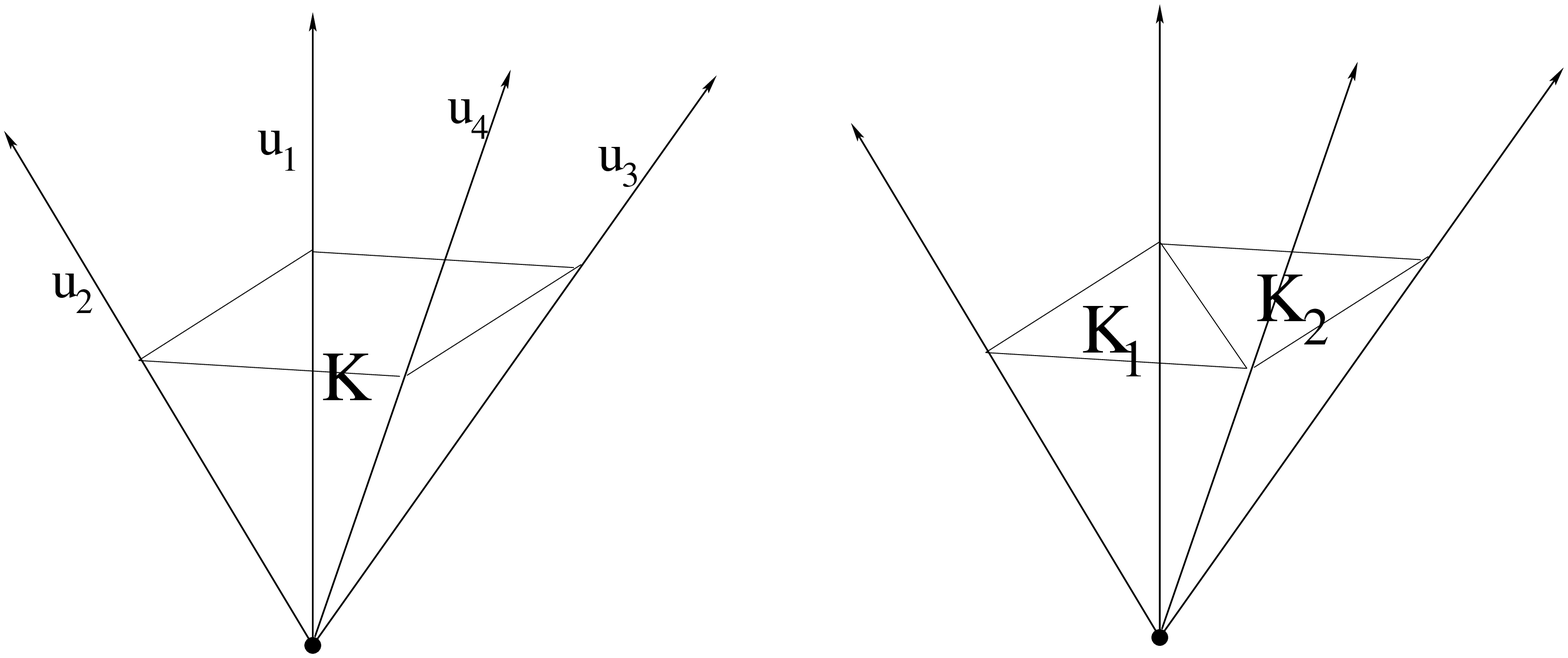}
$$
Note that $K$ is not simplicial, but it can
be expressed as $K=K_1 \cup K_2$ where $K_1, K_2$ are the full-dimensional 
unimodular simplicial cones generated by the two bases for the lattice $L=\ZZ^3$
given by $\{u_1,u_2,u_4\}, \{u_1,u_3,u_4\}$ respectively.
Their intersection $K_1 \cap K_2$ is the $2$-dimensional simplicial 
cone generated by $\{u_1,u_4\}$.

Therefore, applying properties (vi) and then (v)
from Proposition~\ref{integral-valuation-prop}, one can compute
$$
\begin{aligned}
\sss(K;\xx)
   & \overset{(vi)}{=} \sss(K_1;\xx)+\sss(K_2;\xx) \\
   & \overset{(v)}{=} \frac{1}{x_1(x_1+x_2)(x_1+x_2+x_3)} +
         \frac{1}{x_1(x_1+x_3)(x_1+x_2+x_3)} \\
   & = \frac{2x_1+x_2+x_3}{x_1(x_1+x_2)(x_1+x_3)(x_1+x_2+x_3)}.
\end{aligned}
$$

\noindent
Alternatively, one could first compute $\HHH(K,\XX)$ via
Proposition~\ref{integer-points-valuation-prop} (vi) and (v):
\begin{equation}
\label{example-Hilbert-series-computation}
\begin{aligned}
\HHH(K;\XX)
   & \overset{(vi)}{=} \HHH(K_1;\XX)+\HHH(K_2;\XX)-\HHH(K_1 \cap K_2;\XX) \\
   & \overset{(v)}{=} \frac{1}{(1-X_1)(1-X_1 X_2)(1-X_1 X_2 X_3)} \\
   & \qquad       + \frac{1}{(1-X_1)(1-X_1 X_3)(1-X_1 X_2 X_3)} 
          -  \frac{1}{(1-X_1)(1-X_1 X_2 X_3)} \\
   & = \frac{1-X_1^2 X_2 X_3}{(1-X_1)(1-X_1 X_2)(1-X_1 X_3)(1-X_1 X_2 X_3)}.
\end{aligned}
\end{equation}
\noindent
Then one could recover $\sss(K;\xx)$ by first
making the exponential substitution \eqref{exponential-change-of-variables},
then expanding the analytic part $\HHH(K;\XX)$ as a power series in $\xx$, and using this
to extract the homogeneous component $\HHH_{-3}(\xx)$ of degree $-3=-n$:
$$
\begin{aligned}
&\HHH(K;\XX)\\
 &= \frac{1-e^{2x_1+x_2+x_3}}{(1-e^{x_1})(1-e^{x_1+x_2})(1-e^{x_1+x_3})(1-e^{x_1+x_2+x_3})}\\
 &= \frac{1}{x_1(x_1+x_2)(x_1+x_3)(x_1+x_2+x_3)} \\
&\qquad \cdot \left( 1-e^{2x_1+x_2+x_3} \right) 
   \left(\frac{x_1}{1-e^{x_1}}\right)
   \left(\frac{x_1+x_2}{1-e^{x_1+x_2}}\right)
   \left(\frac{x_1+x_3}{1-e^{x_1+x_3}}\right)
   \left(\frac{x_1+x_2+x_3}{1-e^{x_1+x_2+x_3}}\right)\\
&= \frac{-( 2x_1+x_2+x_3 ) + \left(\text{terms of degree at least } 2\right)}
   {x_1(x_1+x_2)(x_1+x_3)(x_1+x_2+x_3)} \\
&\qquad \cdot
   \left( 1 + o(x_1) \right)
   \left( 1 + o(x_1+x_2) \right)
   \left( 1 + o(x_1+x_3) \right)
   \left( 1 + o(x_1+x_2+x_3) \right) \\
&=(-1)^3 \underbrace{\left( \frac{ 2x_1+x_2+x_3 }{x_1(x_1+x_2)(x_1+x_3)(x_1+x_2+x_3)}\right)}_{\sss(K;\xx)} + 
\left(\text{terms of degree at least }-2\right)
\end{aligned}
$$
in agreement with our previous computation.

Alternatively, one can obtain $\HHH(K;\XX)$ and $\sss(K;\xx)$ from
Proposition~\ref{prop:complete-intersection-consequences},
since we claim that $R=k[K \cap L]$ has this 
complete intersection presentation:
$$
R \cong S/I = k[U_1,U_2,U_3,U_4]/(U_1 U_4 - U_2 U_3).
$$
To see this, start by observing that the map 
$$
\begin{aligned}
S=k[U_1,U_2,U_3,U_4] &\overset{\varphi}{\longrightarrow} & R\\
U_i &\longmapsto & e^{u_i} \\
\end{aligned}
$$
is surjective, since $K$ was covered by the two unimodular cones $K_1$ and $K_2$.
Note that there is a unique (up to scaling) linear dependence
\begin{equation}
\label{unique-dependence}
u_1+u_4=u_2+u_3 \quad (=2e_1+e_2+e_3)
\end{equation}
among $\{u_1,u_2,u_3,u_4\}$.  Hence $I=\ker \varphi$ contains the
principal ideal $(U_1 U_4 - U_2 U_3)$.
Furthermore, Proposition~\ref{toric-ideal-generators}
implies that $I$ is generated by binomials of the form $U^\alpha-U^\beta$ where
$\sum_{i=1}^4 \alpha_i u_i = \sum_{j=1}^4 \beta_j u_j$.  Due to the uniqueness of the dependence
\eqref{unique-dependence}, one must have 
$$
\alpha_1=\alpha_4=\beta_2=\beta_3>0 \text{ and }\alpha_2=\alpha_3=\beta_1=\beta_4=0.
$$
Thus $U^\alpha-U^\beta = (U_1U_4)^{\alpha_1}-(U_2U_3)^{\alpha_1}$, which lies in
the ideal $(U_1 U_4 - U_2 U_3)$.  Thus $I=\ker \varphi = (U_1 U_4 - U_2 U_3)$. 
\end{example}

\section{Identifying $\Psi_P$ and $\Phi_P$}
\label{identification-section}

Recall from the introduction that for a poset $P$ on
$\{1,2,\ldots,n\}$ we wish to associate two polyhedral cones.
The first is 
$$
\Kweight_P:=\{x \in \RR_+^n: x_i \geq x_j \text{ for }i <_P j \}
$$
inside the vector space $\RR^n$
with standard basis $e_1,\ldots,e_n$ spanning the appropriate
lattice $\Lweight=\ZZ^n$.  The second is 
$$
\Kroot_P  =\RR_+\{ e_i - e_j : i <_P j\}\\
$$
inside the codimension one subspace $\Vroot \cong \RR^{n-1}$  
of $\RR^n$ where the sum of coordinates $x_1 + \cdots + x_n=0$.  
We consider this
subspace to have Lebesgue measure normalized to make the basis 
$\{e_1-e_2,e_2-e_3,\ldots,e_{n-1}-e_n\}$ for the appropriate
lattice $\Lroot \cong \ZZ^{n-1}$ span a parallelepiped of volume $1$.

\begin{proposition} 
\label{identification-proposition}
For any poset $P$ on $\{1,2,\ldots,n\}$, one has
$$
\begin{array}{rcl}
\Psi_P(\xx) &:=  \sum_{w \in \LLL(P)} 
  w\left(\frac{1}{(x_1-x_2)(x_2-x_3)\cdots(x_{n-1}-x_n)} \right) 
              &=s(\Kroot_P;\xx)  \\
 & \\
\Phi_P(\xx) &:= \sum_{w \in \LLL(P)} 
  w\left(\frac{1}{x_1(x_1+x_2)(x_1+x_2+x_3)\cdots(x_1+\cdots+x_n)} \right) 
              &=s(\Kweight_P;\xx) 
\end{array}
$$
\end{proposition}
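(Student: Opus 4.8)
The plan is to treat the two identities in parallel, each by triangulating the relevant cone into simplicial pieces indexed by linear extensions and then applying the valuation property Proposition~\ref{integral-valuation-prop}(v),(vi). For the weight-cone statement, observe that the cone $\Kweight_P$ consists of all $x\in\RR_+^n$ weakly compatible with $P$, and that $\RR_+^n$ itself is cut up by the hyperplanes $x_i=x_j$ into the chambers indexed by total orders on $\{1,\dots,n\}$, namely for a permutation $w$ the chamber $C_w=\{x\in\RR_+^n: x_{w(1)}\ge x_{w(2)}\ge\cdots\ge x_{w(n)}\}$. The key geometric observation is that $\Kweight_P=\bigcup_{w\in\LLL(P)}C_w$, since a point $x$ with distinct coordinates lies in $\Kweight_P$ iff the total order it induces refines $P$, i.e. is a linear extension of $P$; the chambers overlap only in lower-dimensional faces (where some $x_i=x_j$), so $\chi_{\Kweight_P}=\sum_{w\in\LLL(P)}\chi_{C_w}$ holds up to measure-zero sets, which is exactly the kind of relation needed for the solid valuation in Proposition~\ref{integral-valuation-prop}(vi). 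Each $C_w$ is a full-dimensional simplicial cone: its extreme rays are spanned by the partial-sum vectors $e_{w(1)}+\cdots+e_{w(k)}$ for $k=1,\dots,n$, which form a unimodular basis, so by part (v), $\sss(C_w;\xx)=\frac{1}{x_{w(1)}(x_{w(1)}+x_{w(2)})\cdots(x_{w(1)}+\cdots+x_{w(n)})}=w\!\left(\frac{1}{x_1(x_1+x_2)\cdots(x_1+\cdots+x_n)}\right)$. Summing over $w\in\LLL(P)$ gives $\sss(\Kweight_P;\xx)=\Phi_P(\xx)$.

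For the root-cone statement, work inside $\Vroot$. The idea is dual in spirit: the cone $\Kroot_P=\RR_+\{e_i-e_j: i<_P j\}$ should likewise be triangulated by simplicial cones $D_w$ indexed by $w\in\LLL(P)$, where $D_w:=\RR_+\{e_{w(1)}-e_{w(2)},\,e_{w(2)}-e_{w(3)},\dots,e_{w(n-1)}-e_{w(n)}\}$. Here the cleanest route is to recognize $\Kroot_P$ as (a full-dimensional cone in $\Vroot$ that is) the union of the $D_w$ over linear extensions $w$: a vector $v=\sum_{i<_Pj}c_{ij}(e_i-e_j)$ with all $c_{ij}\ge 0$ can be rewritten in the basis $\{e_{w(k)}-e_{w(k+1)}\}$ with nonnegative coefficients precisely when the "flow" it represents is consistent with some linear order extending $P$; equivalently, $\Kroot_P$ is the cone dual (inside $\Vroot$) to $\Kweight_P$ viewed appropriately, and duality of cones together with the known decomposition of $\RR^n/\langle(1,\dots,1)\rangle$ into Weyl chambers gives the decomposition $\chi_{\Kroot_P}=\sum_{w\in\LLL(P)}\chi_{D_w}$ up to lower-dimensional overlaps. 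Each $D_w$ is full-dimensional and simplicial in $\Vroot$ with the $n-1$ extreme-ray generators $e_{w(k)}-e_{w(k+1)}$ forming a $\ZZ$-basis of $\Lroot$ of covolume $1$ by our normalization, so Proposition~\ref{integral-valuation-prop}(v) yields $\sss(D_w;\xx)=\frac{1}{(x_{w(1)}-x_{w(2)})\cdots(x_{w(n-1)}-x_{w(n)})}=w\!\left(\frac{1}{(x_1-x_2)\cdots(x_{n-1}-x_n)}\right)$, and part (vi) gives $\sss(\Kroot_P;\xx)=\Psi_P(\xx)$.

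I expect the main obstacle to be the precise verification that the two unions $\Kweight_P=\bigcup_w C_w$ and $\Kroot_P=\bigcup_w D_w$ are genuine decompositions into full-dimensional simplicial cones meeting only along lower-dimensional faces — in particular checking that the union on the right is all of (and not properly contained in) the cone on the left, and that the interiors are disjoint. For $\Kweight_P$ this is essentially the statement that the chambers of the braid arrangement inside $\RR_+^n$ are indexed by permutations and that $x\in\Kweight_P$ forces the induced order to be a linear extension, which is elementary. For $\Kroot_P$ one must argue the reverse inclusion $\Kroot_P\subseteq\bigcup_w D_w$, i.e. that every nonnegative combination of the root vectors $e_i-e_j$ ($i<_Pj$) already lies in one of the simplicial subcones $D_w$; the transparent way to see this is via the duality bullet points after the review of cones: the dual of $\Kroot_P$ inside $(\Vroot)^*$ is (the image of) $\Kweight_P$, so the chamber decomposition of one transports to a decomposition of the other. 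Finally one must confirm that the measure-zero overlaps are harmless — which is exactly what properties (i),(ii) and the solid-valuation property (vi) of Proposition~\ref{integral-valuation-prop} are designed to handle, since any inclusion–exclusion correction terms live on non-full-dimensional cones and hence contribute $0$ to $\sss(-;\xx)$.
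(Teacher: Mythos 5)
Your argument for the $\Phi_P$ identity is correct: the chambers $C_w$ for $w\in\LLL(P)$ really do cover $\Kweight_P$ with only non-full-dimensional overlaps, so Proposition~\ref{integral-valuation-prop}(ii),(v),(vi) finish it; this is just the ``unrolled'' form of the paper's induction on incomparable pairs (which adds one relation $i<j$ at a time and uses the preorder cone $\Kweight_{P_{i=j}}$ as the correction term).

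The $\Psi_P$ half, however, contains a genuine error. The cones $D_w=\RR_+\{e_{w(k)}-e_{w(k+1)}\}$ for $w\in\LLL(P)$ do \emph{not} triangulate $\Kroot_P$: since $w$ extends $P$, each generator $e_i-e_j$ (for $i<_P j$) telescopes as $\sum_{k=a}^{b-1}(e_{w(k)}-e_{w(k+1)})$ with $i=w(a)$, $j=w(b)$, $a<b$, so in fact $\Kroot_P\subseteq D_w$ for \emph{every} $w\in\LLL(P)$. Consequently $\bigcup_w D_w$ is generally strictly larger than $\Kroot_P$ (for the antichain it is essentially all of $\Vroot$ while $\Kroot_P=\{0\}$; for $P=\{1<2,\,1<3\}$ on three elements, $D_{123}$ contains $e_2-e_3\notin\Kroot_P$), and the pairwise overlaps $D_w\cap D_{w'}$ are full-dimensional, not measure zero. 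So both the forward inclusion and the ``harmless overlaps'' claim fail, and the reverse inclusion you flag as the obstacle is the one that holds trivially. The identity $\sss(\Kroot_P;\xx)=\sum_w\sss(D_w;\xx)$ is still true, but the mechanism is different: the correction terms in the relevant relation among characteristic functions are cones that are \emph{non-pointed} (they contain a line $\RR(e_i-e_j)$ for an incomparable pair $i,j$), and these vanish under $\sss$ by Proposition~\ref{integral-valuation-prop}(i), not (ii). This is exactly what the paper's inductive step does via $\chi_{\Kroot_P}+\chi_{\Kroot_{P_{i=j}}}=\chi_{\Kroot_{P_{i<j}}}+\chi_{\Kroot_{P_{j<i}}}$, where $\Kroot_{P_{i<j}}\cup\Kroot_{P_{j<i}}=\Kroot_{P_{i=j}}$ is non-pointed and $\Kroot_{P_{i<j}}\cap\Kroot_{P_{j<i}}=\Kroot_P$ (note the union/intersection roles are reversed relative to the weight cones). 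Your passing appeal to duality is the right instinct --- dualizing the weight-cone relation via $\sum c_i\chi_{K_i}=0\Leftrightarrow\sum c_i\chi_{K_i^*}=0$ turns non-full-dimensional corrections into non-pointed ones --- but as written you assert the wrong geometric picture, so this half needs to be redone along those lines.
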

\begin{proof}
(cf. Gessel \cite[Proof of Theorem 1]{Gessel})
Proceed by induction on the number of pairs $\{i,j\}$ in $[n]$ that
are {\it incomparable} in $P$.  In the base case where there are no such
pairs, $P$ is a linear order, of the form $P_w$ for some $w$ in $\symm_n$, with 
$\LLL(P_w)=\{w\}$, and the cones
$\Kweight_{P_w}, \Kroot_{P_w}$ are simplicial and unimodular, 
having extreme rays spanned by, respectively,
$$
\begin{array}{rcccl}
&(e_{w(1)}-e_{w(2)},&e_{w(2)}-e_{w(3)},&\ldots,&e_{w(n-1)}-e_{w(n)}) \\
\text{ and }&(e_{w(1)},&e_{w(1)}+e_{w(2)},&\ldots,&e_{w(1)}+e_{w(2)}+\cdots+e_{w(n)}).
\end{array}
$$
Thus Proposition~\ref{integral-valuation-prop}(v) gives the
desired equalities in this case.

In the inductive step, if $i,j$ are incomparable in $P$ then
either order relation $i < j$ or the reverse $j < i$ may be
added to $P$ (followed by taking the transitive closure), to obtain
two posets $P_{i<j}, P_{j<i}$.  Note that 
$$
\LLL(P) = \LLL(P_{i<j}) \sqcup \LLL(P_{j<i})
$$
and hence 
\begin{equation}
\label{incomparable-pair-recurrence}
\begin{aligned}
\Psi_P(\xx) & = \Psi_{P_{i<j}}(\xx) + \Psi_{P_{j<i}}(\xx), \\
\Phi_P(\xx) & = \Phi_{P_{i<j}}(\xx) + \Phi_{P_{j<i}}(\xx). \\
\end{aligned}
\end{equation}
It only remains to show that 
$\sss(\Kroot_P;\xx)$ and $\sss(\Kweight_P;\xx)$
satisfy this same recurrence.
If one introduces into the binary relation $P$ {\it both}
relations $i\leq j$ and $j\leq i$ before taking the transitive closure, 
then one obtains a {\it quasiorder} or {\it preorder} that we denote $P_{i=j}$.
It is natural to also introduce the (non-full-dimensional) 
cone $\Kweight_{P_{i=j}}$ lying inside the hyperplane
where $x_i=x_j$, and the (non-pointed) cone 
$\Kroot_{P_{i=j}}$ containing the line $\RR(e_i-e_j)$.  
One then has these decompositions 
$$
\begin{array}{rcccl}
\Kweight_P &= \Kweight_{P_{i < j}} \cup \Kweight_{P_{j < i}} 
              & \text{ with }
              &\Kweight_{P_{i < j}} \cap \Kweight_{P_{j < i}} 
                  &= \Kweight_{P_{i = j}},    \\
 & & & & \\
\Kroot_{P_{i=j}} &= \Kroot_{P_{i < j}} \cup \Kroot_{P_{j < i}} 
              &  \text{ with }
               &\Kroot_{P_{i < j}} \cap \Kroot_{P_{j < i}} 
                 &= \Kroot_{P}   
\end{array}
$$
leading to these relations among characteristic functions of cones:
\begin{equation}
\label{characteristic-function-relation}
\begin{array}{rcl}
\chi_{\Kweight_P}+ \chi_{\Kweight_{P_{i = j}}}    
 &= \chi_{\Kweight_{P_{i < j}}} + \chi_{\Kweight_{P_{j < i}}}, \\
\chi_{\Kroot_P}+ \chi_{\Kroot_{P_{i = j}}}    
 &= \chi_{\Kroot_{P_{i < j}}} + \chi_{\Kroot_{P_{j < i}}}.
\end{array}
\end{equation}
From this one concludes using Proposition~\ref{integral-valuation-prop}(vi) that
$$
\begin{aligned}
\sss(\Kweight_P;\xx) 
 &= \sss(\Kweight_{P_{i<j}};\xx) + \sss(\Kweight_{P_{j<i}};\xx), \\
\sss(\Kroot_P;\xx) 
 &= \sss(\Kroot_{P_{i<j}};\xx) + \sss(\Kroot_{P_{j<i}};\xx) 
\end{aligned}
$$
since Proposition~\ref{integral-valuation-prop}(i)
implies 
$
\sss(\Kweight_{P_{i=j}};\xx) = \sss(\Kroot_{P_{i=j}};\xx) = 0.
$ 
Comparing with \eqref{incomparable-pair-recurrence}, the result follows
by induction.
\end{proof}

\begin{remark}
\label{duality-relation-remark}
The parallel between the relations 
in \eqref{characteristic-function-relation} is {\it not} a coincidence.
It reflects a general duality 
\cite[Corollary 2.8]{BarvinokPommersheim} relating identities
among characteristic functions of cones $K_i$ and their polar dual
cones $K^*_i$:
\begin{equation}
\label{polar-duality-relation}
\sum_i c_i \chi_{K_i} = 0 
\text{ if and only if }
\sum_i c_i \chi_{K^*_i} = 0.
\end{equation}
While it is not true that the cones $\Kweight_P$ and $\Kroot_P$
are polar dual to each other, this is {\it almost} true,
as we now explain.  

The dual space to the hyperplane $x_1+\cdots+x_n=0$,
which is the ambient space for $\Kroot_P$ is the quotient
space $\RR^n/\ell$ where $\ell$ is the line $\RR(e_1+\cdots+e_n)$.
Thus identities among characteristic functions of cones
$\Kroot_P$ give rise via \eqref{polar-duality-relation},
to identities among the characteristic functions of
their dual cones $(\Kroot_P)^*$ inside this quotient
space.  The cone $\Kweight_P$
maps via the quotient mapping  $\RR^n \rightarrow \RR^n/\ell$
to the dual cone $(\Kroot_P)^*$. Moreover, one can check that the intersection
$\Kweight_P \cap \ell$ is exactly the half-line/ray 
$$
\ell^+:=\RR_+(e_1+\cdots+e_n).
$$
Therefore, identities among characteristic functions of
the cones $(\Kroot_P)^*$ ``lift'' to the same identity among
characteristic functions of the cones $\Kweight_P$.

We are still lying slightly here, since just as
in \eqref{characteristic-function-relation}, one must
not only consider the cones $\Kweight_P, \Kroot_P$ for
posets on $\{1,2,\ldots,n\}$, but also for preposets.
See \cite[\S 3.3]{PostnikovReinerWilliams} for more on this preposet-cone dictionary
for the cones $\Kweight_P$.

We remark also that this duality is the source of our terminology
$\Kroot, \Kweight$ for these cones, as the hyperplane
$x_1+\cdots+x_n=0$ is the ambient space for the {\it root lattice}
of type $A_{n-1}$, while the dual space $\RR^n/\ell$ is the
ambient space for its dual lattice, the {\it weight lattice} of type
$A_{n-1}$.
\end{remark}

\section{Application: skew diagram posets and Theorem B}
\label{skew-diagram-section}

Recall from the introduction that to a {\it skew (Ferrers) diagrams} $D=\lambda/\mu$,
thought of as a collection of points $(i,j)$ in the plane occupying rows $1,2,\ldots,r$
numbered top to bottom, and columns $1,2,\ldots,c$ numbered right to left,
we associate a bipartite poset $P_D$ 
on the set $\{x_1,\ldots,x_r,y_1,\ldots,y_c\}$
having an order relation $x_i <_{P_D} y_j$ whenever $(i,j)$ is a point of $D$.

We wish to prove Theorem B from the introduction, evaluating
$\Psi_{P_D}(\xx)$ for every skew diagram $D$.  Without loss of generality,
we will assume for the remainder of this section that the skew diagram $D$ 
is connected in the sense that its poset $P_D$ is connected; otherwise 
both sides of Theorem B vanish (for the left side, via Corollary~\ref{identify-denominators}, and for the right side because the sum is empty).

We exhibit a known triangulation for the cone $\Kroot_{P_D}$.
The cone $\Kroot_{P_D}$ lives in the codimension one subspace $\Vroot$ of
the product space $\RR^{r+c}=\RR^r \times \RR^c$ with standard basis vectors 
$e_1,\ldots,e_r$ and $f_1,\ldots,f_c$, and dual coordinates
$x_1,\ldots,x_r$ and $y_1,\ldots,y_c$.
Here $\Kroot_{P_D}$ is the nonnegative span of the vectors
$
\{e_i - f_j: (i,j) \in D\}.
$
Note that each of these vectors lies in the following affine hyperplane $H$
of $\Vroot$:
\begin{equation}
\label{affine-hyperplane}
H:=\{ (\xx,\yy) \in \RR^r \times \RR^c: 
x_1+\cdots+x_r=1 \text{ and } y_1+\cdots+y_c=-1 \}.
\end{equation}
Thus it suffices to triangulate the polytope $\mathcal{P}_{D}$, which is
the convex hull of these vectors inside this affine hyperplane $H$.

Consider the skew diagram $D$ as the componentwise partial order 
on its elements $(i,j)$.  One finds that $D$ is a {\it distributive lattice},
in which the meet $\wedge$ and join $\vee$ of two elements
$(i,j), (i',j')$ are their componentwise minimums and maximums:
$$
\begin{aligned}
(i,j) \wedge (i',j') &= ( \min(i,i'), \min(j,j') ) \\
(i,j) \vee (i',j') &= ( \max(i,i'), \max(j,j') ). \\
\end{aligned}
$$
Consequently, by Birkhoff's Theorem 
on the structure of finite distributive lattices
\cite[Theorem 3.4.1]{Stanley-EC1}, the lattice $D$ is isomorphic to the
lattice of order ideals for the subposet $\Irr(D)$ of {\it join-irreducible} 
elements of $D$. 

For any finite poset $Q$, Stanley \cite{Stanley-orderpolytopes}
considered a convex polytope called the {\it order polytope} of $\OOO(Q)$,
which one can think of as the convex hull 
within $\RR^Q$ of the characteristic vectors of order ideals of
$Q$; see \cite[Corollary 1.3]{Stanley-orderpolytopes}.

\begin{proposition}
The convex hull $\mathcal{P}_{D}$ of the vectors $\{e_i - f_j: (i,j) \in D\}$
is affinely isomorphic to the order polytope $\OOO(\Irr(D))$
for the poset $\Irr(D)$.
\end{proposition}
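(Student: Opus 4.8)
The plan is to exhibit an explicit affine map from the ambient space $H$ of $\mathcal{P}_D$ to the ambient space $\RR^{\Irr(D)}$ of the order polytope, and check it carries vertices to vertices bijectively in a way that respects affine combinations. First I would recall that the vertices of $\mathcal{P}_D$ are the vectors $e_i - f_j$ for $(i,j) \in D$, indexed by the elements of the distributive lattice $D$, while the vertices of $\OOO(\Irr(D))$ are the characteristic vectors $\chi_J$ of order ideals $J$ of $\Irr(D)$, and Birkhoff's theorem (cited above as \cite[Theorem 3.4.1]{Stanley-EC1}) gives a bijection $(i,j) \mapsto J_{(i,j)}$, where $J_{(i,j)}$ is the order ideal of join-irreducibles below $(i,j)$ in the componentwise order. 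So the underlying vertex bijection is already in hand; the task is to realize it affinely.

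The key observation is that the coordinate $x_i$ of $e_i - f_j$ equals $1$ if the first component of the lattice element is $i$, and $0$ otherwise; similarly $y_j$ records the second component. Concretely, for a point $v = e_{i_0} - f_{j_0} \in \mathcal{P}_D$ one has, for each row index $i$, the relation $x_i + x_{i+1} + \cdots + x_r = 1$ iff $i \le i_0$, and for each column index $j$, $-(y_1 + \cdots + y_j) = 1$ iff $j \le j_0$. Thus the "staircase" linear functionals $\xx \mapsto \sum_{i' \ge i} x_{i'}$ and $\yy \mapsto -\sum_{j' \le j} y_{j'}$, evaluated at the vertices of $\mathcal{P}_D$, take the value $1$ or $0$ according to a threshold condition that matches exactly the condition for a given join-irreducible of $D$ — namely the join-irreducibles of a skew-shape lattice are the "row-initial" and "column-initial" segments — to lie in the order ideal $J_{(i_0,j_0)}$. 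Hence assembling these functionals into a single affine map $\Theta \colon H \to \RR^{\Irr(D)}$ sends each vertex $e_i - f_j$ precisely to $\chi_{J_{(i,j)}}$.

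From there I would finish as follows: $\Theta$ is the restriction of an affine map between the ambient affine spaces, it is injective on the affine span of $\mathcal{P}_D$ (since the staircase functionals separate the vertices, and in fact one can recover $x_i$ and $y_j$ from consecutive differences of the staircase sums), and it surjects the vertex set of $\mathcal{P}_D$ onto the vertex set of $\OOO(\Irr(D))$ bijectively; an affine map taking the vertex set of one polytope bijectively onto the vertex set of another, and injective on affine spans, is an affine isomorphism of the polytopes. The one point requiring a little care — and the main obstacle — is correctly identifying $\Irr(D)$ for a \emph{skew} shape and matching it against the staircase functionals: one must check that the join-irreducibles are exactly indexed by the "lowest box in a column" together with the "rightmost box in a row" data (equivalently, the order ideals of $\Irr(D)$ biject with the elements of $D$ via Birkhoff), and that no spurious coordinates appear when $D$ is disconnected or has a nontrivial inner shape $\mu$ — which is why we reduced to connected $D$ at the start of the section. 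Once that combinatorial bookkeeping is pinned down, the affine isomorphism is immediate.
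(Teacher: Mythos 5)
Your proof is correct and is essentially the paper's own argument: the paper defines a linear map $\psi$ with $e_i \mapsto \epsilon_1+\cdots+\epsilon_{i-1}$ and $f_j \mapsto \pm(\phi_1+\cdots+\phi_{j-1})$, whose coordinate functions are exactly your staircase partial sums, and likewise checks that $(i,j)\in D$ corresponds to the order ideal of $\Irr(D)$ identified with $\{\epsilon_1,\ldots,\epsilon_{i-1},\phi_1,\ldots,\phi_{j-1}\}$. The only slip is in your column functionals: for the vertex $e_{i_0}-f_{j_0}$ one has $-(y_1+\cdots+y_j)=1$ iff $j\ge j_0$ (not $j\le j_0$), so the coordinate indexed by $\phi_l$ should be $-\sum_{j'\ge l+1}y_{j'}$, which on $H$ differs from your expression only by the constant coming from the constraint $y_1+\cdots+y_c=-1$, so the argument goes through unchanged.
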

\begin{proof}
Identify the join-irreducibles $(i,j)$ in $\Irr(D)$ with basis vectors
$$
\epsilon_1,\ldots,\epsilon_{r-1},\phi_1,\ldots,\phi_{c-1}
$$
in $\RR^{r-1} \times \RR^{c-1}$ as follows:
\begin{enumerate}
\item[$\bullet$] 
if $(i,j)$ covers $(i-1,j)$, identify $(i,j)$ with $\epsilon_{i-1}$, 
\item[$\bullet$]
if $(i,j)$ covers $(i,j-1)$, identify  $(i,j)$ with $\phi_{j-1}$.
\end{enumerate}
One can then check that a general element $(i,j)$ of $D$
corresponds to an order ideal in $\Irr(D)$ whose elements
are identified with 
$\{\epsilon_1,\ldots,\epsilon_{i-1},\phi_1,\ldots,\phi_{j-1}\}$.
Thus the order polytope $\OOO(\Irr(D))$ is simply the convex hull of vectors
\[ \{\epsilon_1+ \dots + \epsilon_{i-1} +\phi_1+\cdots+\phi_{j-1}: (i,j) \in D \}. \]

The linear morphism
\[ \psi :
\begin{array}{rcl}
\RR^r \times \RR^c & \longrightarrow & \RR^{r-1} \times \RR^{c-1} \\
e_{i} & \longmapsto & \epsilon_1+ \dots + \epsilon_{i-1} \\
f_{j} &\longmapsto & \phi_1+\cdots+\phi_{j-1}
\end{array}\]
restricts to an affine isomorphism
$H \to \RR^{r-1} \times \RR^{c-1}$
sending $e_i-f_j$ to $$ \epsilon_1+\cdots+\epsilon_{i-1} +\phi_1+\cdots+\phi_{j-1}.$$
Therefore, $\psi$ restricts further to an isomorphism between $\mathcal{P}_{D}$ and $\OOO(\Irr(D))$.
\end{proof}

\begin{corollary}
For any skew diagram $D$, the 
cone $\Kroot_{P_D}$ has a triangulation into unimodular
cones $K_\pi$ indexed by lattice paths $\pi$ from $(1,1)$ to $(r,c)$.
Furthermore, the extreme rays of $K_\pi$ are spanned by the vectors
$\{e_i-f_j\}_{(i,j) \in \pi}$.

Consequently, as asserted in Theorem B, one has
$$
\Psi_{P_D}(\xx) 
 = \sum_{\pi} \frac{1}{\prod_{(i,j) \in \pi} (x_i - y_j)}
 = \frac{ \sum_{\pi} \prod_{(i,j) \in D \setminus \pi} (x_i - y_j) }
                      {  \prod_{(i,j) \in D} (x_i - y_j)  }.
$$
In particular, when $D$ is the Ferrers diagram $D$ of a partition $\lambda$, one has
$$
\Psi_{P_D}(\xx) = \frac{ \symm_{\hat{w}}(\xx,\yy)} { \symm_{w}(\xx,\yy)   }
$$
where $\symm_{w}(\xx,\yy), \symm_{\hat{w}}(\xx,\yy)$ are the double Schubert polynomials for
the dominant permutation $w$ having Lehmer code $\lambda=(\lambda_1,\ldots,\lambda_r)$,
and the vexillary permutation $\hat{w}$ having Lehmer code 
$\hat{\lambda}:=(0,\lambda_2-1,\ldots,\lambda_r-1)$.
\end{corollary}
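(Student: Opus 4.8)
The plan is to deduce the corollary from the preceding proposition and from the properties of the valuation $\sss(-;\xx)$. First I would recall that Stanley's order polytope $\OOO(Q)$ of a poset $Q$ carries a canonical unimodular triangulation, whose maximal simplices are indexed by the linear extensions of $Q$: the simplex attached to a linear extension $\sigma$ is the chain of order ideals $\varnothing \subsetneq I_1 \subsetneq \cdots \subsetneq I_{|Q|} = Q$ obtained by adding the elements of $Q$ in the order prescribed by $\sigma$ (see \cite[\S1, \S5]{Stanley-orderpolytopes}). Under the affine isomorphism $\psi$ of the previous proposition, this becomes a triangulation of the polytope $\mathcal{P}_D$, hence (by coning from the origin inside $\Vroot$) a triangulation of $\Kroot_{P_D}$ into simplicial cones. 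The combinatorial content is that linear extensions of $\Irr(D)$ biject with the maximal chains in the distributive lattice $D \cong J(\Irr(D))$, and maximal chains in $D$ from $\hat 0 = (1,1)$ to $\hat 1 = (r,c)$ are precisely the lattice paths $\pi$ from $(1,1)$ to $(r,c)$ taking unit south or west steps. The cone $K_\pi$ is then spanned by the vectors $\{e_i - f_j : (i,j) \in \pi\}$, which are the images of the order-ideal characteristic vectors along that chain, and since consecutive ideals differ by one element these $r+c-1$ vectors form a $\ZZ$-basis of $\Lroot$ restricted to their span — i.e. $K_\pi$ is unimodular.

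Next I would feed this triangulation into the valuation machinery. Because $\sss(-;\xx)$ is a solid valuation (Proposition~\ref{integral-valuation-prop}(vi)) and a triangulation expresses $\chi_{\Kroot_{P_D}}$ as an alternating sum of $\chi_{K_\pi}$ and characteristic functions of lower-dimensional faces (all of which contribute $0$ by Proposition~\ref{integral-valuation-prop}(ii)), one gets
$$
\sss(\Kroot_{P_D};\xx) = \sum_{\pi} \sss(K_\pi;\xx).
$$
Each $K_\pi$ is simplicial and unimodular with extreme rays spanned by $\{e_i - f_j\}_{(i,j)\in\pi}$, so Proposition~\ref{integral-valuation-prop}(v) gives $\sss(K_\pi;\xx) = 1/\prod_{(i,j)\in\pi}(x_i - y_j)$ (the determinant being $\pm 1$ with respect to the normalized Lebesgue measure on $\Vroot$ chosen in Section~\ref{identification-section}). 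Combining with Proposition~\ref{identification-proposition}, which identifies $\Psi_{P_D}(\xx)$ with $\sss(\Kroot_{P_D};\xx)$, yields
$$
\Psi_{P_D}(\xx) = \sum_\pi \frac{1}{\prod_{(i,j)\in\pi}(x_i-y_j)},
$$
and clearing denominators over the common denominator $\prod_{(i,j)\in D}(x_i - y_j)$ produces the stated numerator $\sum_\pi \prod_{(i,j)\in D\setminus\pi}(x_i - y_j)$.

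For the last assertion, when $\mu = \varnothing$ so that $D$ is the Ferrers diagram of $\lambda = (\lambda_1,\ldots,\lambda_r)$, I would identify both the numerator and the denominator above with the claimed double Schubert polynomials directly. The denominator $\prod_{(i,j)\in D}(x_i - y_j) = \prod_{i=1}^r \prod_{j=1}^{\lambda_i}(x_i - y_j)$ is the standard product formula for the double Schubert polynomial $\symm_w(\xx,\yy)$ of the dominant permutation with Lehmer code $\lambda$. For the numerator I would invoke the known determinantal/lattice-path (Lindström–Gessel–Viennot) expansion of the vexillary double Schubert polynomial $\symm_{\hat w}(\xx,\yy)$ with code $\hat\lambda = (0,\lambda_2 - 1,\ldots,\lambda_r - 1)$, matching its monomials with the terms $\prod_{(i,j)\in D\setminus\pi}(x_i - y_j)$ indexed by south/west lattice paths $\pi$ in $D$; this is the content of Boussicault's identity \cite[Prop.~4.7.2]{TheseBoussicault}, which I would cite.

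The main obstacle I expect is not any one hard step but the careful bookkeeping in verifying that Stanley's order-polytope triangulation restricts, under $\psi$, to a genuine \emph{unimodular} triangulation of the cone $\Kroot_{P_D}$ with exactly the claimed ray structure, and in particular that maximal chains of $D$ correspond bijectively to the south/west lattice paths from $(1,1)$ to $(r,c)$ — together with confirming that the normalizing constant from Proposition~\ref{integral-valuation-prop}(v), computed with respect to the Lebesgue measure on $\Vroot$ fixed in Section~\ref{identification-section}, is indeed $1$ for each $K_\pi$. The translation to double Schubert polynomials in the Ferrers case is then a routine appeal to known formulas.
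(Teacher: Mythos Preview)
Your proposal is correct and follows essentially the same route as the paper: both invoke Stanley's canonical triangulation of $\OOO(\Irr(D))$, transport it via the affine isomorphism of the preceding proposition to a triangulation of $\Kroot_{P_D}$ by cones $K_\pi$ indexed by lattice paths, verify unimodularity of each $K_\pi$, and then apply Propositions~\ref{integral-valuation-prop}(v),(vi) and~\ref{identification-proposition}. The only substantive difference is in the final Schubert-polynomial identification: the paper recognizes the numerator $\sum_\pi \prod_{(i,j)\in D\setminus\pi}(x_i-y_j)$ as the reduced-pipe-dream expansion of $\symm_{\hat w}(\xx,\yy)$ via Fomin--Kirillov \cite{FominKirillov} (equivalently \cite[Corollary~16.30]{MillerSturmfels}), whereas you appeal to an LGV/determinantal formula and cite \cite{TheseBoussicault} directly---which is acceptable but somewhat circular, since that is the very result being generalized here; the pipe-dream identification is the cleaner and more self-contained choice.
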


\begin{proof}
Stanley \cite[\S5]{Stanley-orderpolytopes} describes a
triangulation of the order polytope $\OOO(Q)$ whose maximal simplices correspond
to linear extensions $\pi$ of $Q$, or to maximal chains $\pi$ in the
distributive lattice of order ideals $J(Q)$.  For $Q=\Irr(D)$, so that $J(Q)=D$,
these linear extensions $\pi$ correspond to lattice paths from $(1,1)$ to $(r,c)$
in the diagram $D$.  Here the vertices spanning the maximal simplex 
in the triangulation corresponding to $\pi$ are the characteristic vectors of the
order ideals on the chain $\pi$.  

Thus one obtains a corresponding triangulation for the polytope, which
is the intersection of $\Kroot_{P_D}$ with the affine hyperplane in
\eqref{affine-hyperplane}, in which the vertices of the
maximal simplex corresponding to $\pi$ are $\{e_i - f_j: (i,j) \in \pi\}$.
Looking instead at the positive cone 
$K_\pi:=\{e_i - f_j: (i,j) \in \pi\}$ spanned by
these vectors therefore gives a triangulation of the cone $\Kroot_{P_D}$.

The cones $K_\pi$ are unimodular:  one can
easily check, via induction on $r+c$, that for any
lattice path $\pi$ from $(1,1)$ to $(r,c)$, the $\ZZ$-linear span
of the vectors $\{ e_i - f_j\}_{(i,j) \in \pi}$ contains
{\it all} vectors of the form 
$$
\begin{aligned}
e_{i}-e_{j} & \text{ for }1 \leq i \neq j \leq r, \\
f_{i}-f_{j} & \text{ for }1 \leq i \neq j \leq c, \\
e_{i}-f_{j} & \text{ for }1 \leq i \leq r \text{ and }1 \leq j \leq c.\\
\end{aligned}
$$
Therefore by Proposition~\ref{identification-proposition} 
and Proposition~\ref{integral-valuation-prop}(vi), one has
$$
\begin{aligned}
\Psi_{P_D} 
  &= \sss(\Kroot_{P_D};\xx) 
  = \sum_{\pi} \sss(K_\pi;\xx) \\
  &= \sum_{\pi} \frac{1}{\prod_{(i,j) \in \pi} (x_i - y_j)}
  = \frac{ \sum_{\pi} \prod_{(i,j) \in D \setminus \pi} (x_i - y_j) }
                      {  \prod_{(i,j) \in D} (x_i - y_j)  }.
\end{aligned}
$$

When $D$ is the Ferrers diagram of a partition $\lambda$,
this denominator product $\prod_{(i,j) \in D} (x_i - y_j)$
is the double Schubert polynomial
$\symm_{w}(\xx,\yy)$ for the dominant permutation $w$ that
has Lehmer code $\lambda$; 
see, e.g., \cite[\S 9.4]{Lascoux}, \cite[eqn. (6.14)]{Macdonald},
or one can argue similarly to the argument for the numerator sum
given in the next paragraph.

There are various ways to identify the numerator sum
$
\sum_{\pi} \prod_{(i,j) \in D \setminus \pi} (x_i - y_j)
$
as $\symm_{\hat{w}}(\xx,\yy)$.  One way is to check
that each lattice path $\pi$ in $D$ gives rise as follows
to a {\it reduced pipe dream} for $\hat{w}$ in the terminology of 
Knutson and Miller \cite[\S 16.1]{MillerSturmfels}:
the $+$'s occur with the (row,column) indices $(i,j)$ given by the lattice points 
{\it not} visited by $\pi$.  Thus the numerator sum is the 
expansion of $\symm_{\hat{w}}(\xx,\yy)$ as a sum over 
reduced pipe dreams for $\hat{w}$; see 
Fomin and Kirillov \cite[Proposition 6.2]{FominKirillov}, 
or Miller and Sturmfels \cite[Corollary 16.30]{MillerSturmfels}.
\end{proof}

\begin{example}
\label{skew-example}
Consider the skew diagram 
$$
D = (4,4,2)/(1,1,0) = 
\begin{matrix}
\cdot & \bullet & \bullet & \bullet \\
\cdot &  \bullet & \bullet & \bullet \\
\bullet  & \bullet  &      &     
\end{matrix}
$$
whose rows and columns we index as follows.
$$
\begin{matrix}
     & y_4   & y_3  & y_2 & y_1 \\
x_1 & \cdot & (1,3) & (1,2) & (1,1) \\
x_2 & \cdot & (2,3) & (2,2) & (2,1) \\
x_3 & (3,4)  & (3,3)  &      &     
\end{matrix}
$$
Thinking of $D$ as a distributive lattice via the componentwise
order on the labels $(i,j)$, one can label
its $5$ join-irreducibles $\Irr(D)$ by the basis vectors $\epsilon_1,
\epsilon_2,\epsilon_3,\phi_1,\phi_2$ as in the above proof.
$$
\begin{matrix}
 \cdot       & \epsilon_2 & \epsilon_1 & \bullet \\
 \cdot       & \bullet     & \bullet     & \phi_1 \\
 \epsilon_3  & \phi_2  &      &     \\
\end{matrix}
$$
The poset $Q$ of join-irreducible elements of $D$ has the following Hasse diagram.
$$
\includegraphics{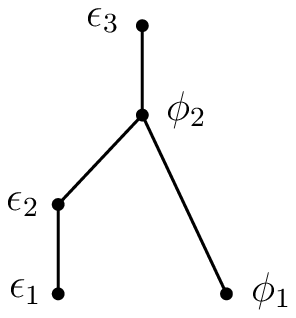}
$$
In this way, the elements of $D$ correspond to the order ideals of $Q$ and to
the vertices of the order polytope $\OOO(\Irr(D))$ as follows.
$$
\begin{matrix}
 \cdot       & \substack{\epsilon_1+\epsilon_2\\} & \substack{\epsilon_1\\} & 0 \\
   & & & \\
 \cdot       & \substack{\epsilon_1+\epsilon_2\\+\phi_1} &  \substack{\epsilon_1\\+\phi_1} & \substack{\phi_1\\} \\
   & & & \\
 \substack{\epsilon_1+\epsilon_2+\epsilon_3\\+\phi_1+\phi_2} &\substack{\epsilon_1+\epsilon_2\\+\phi_1+\phi_2} &      &     \\
\end{matrix}
$$
There are three paths $\pi$ from $(1,1)$ to $(r,c)=(4,3)$, giving rise to the three terms in $\Psi_{P_D}(\xx)$:
$$
\begin{array}{lr}
\begin{matrix}
\cdot & (1,3) & (1,2) & (1,1) \\
\cdot & (2,3) &      &       \\
(3,4)  & (3,3)  &      &     \\
\end{matrix}
& \frac{1}{(x_1-y_1)(x_1-y_2)(x_1-y_3)(x_2-y_3)(x_3-y_3)(x_3-y_4)} \\
 & \\
\begin{matrix}
\cdot & \cdot & (1,2) & (1,1) \\
\cdot & (2,3) &  (2,2)    &       \\
(3,4)  & (3,3)  &      &     \\
\end{matrix}
& +\frac{1}{(x_1-y_1)(x_1-y_2)(x_2-y_2)(x_2-y_3)(x_3-y_3)(x_3-y_4)} \\
 & \\
\begin{matrix}
\cdot & \cdot & \cdot & (1,1) \\
\cdot & (2,3) & (2,2) & (2,1) \\
(3,4)  & (3,3)  &      &     \\
\end{matrix}
& +\frac{1}{(x_1-y_1)(x_2-y_1)(x_2-y_2)(x_2-y_3)(x_3-y_3)(x_3-y_4)}
\end{array}
$$
\end{example}

\section{Extreme rays and Theorem C}
\label{extreme-rays-section}

Our goal here is to identify the extreme rays of the cones $\Kweight_P, \Kroot_P$.
Once achieved, this gives the denominators of $\Psi_P(\xx), \Phi_P(\xx)$,
allows one to decide when the cones are simplicial, leading to Theorem C.

Recall that an {\it order ideal} of a poset $P$ is a subset $J$ of its elements such that, for any
pair $i,j$ of comparable elements ($i \leq_{P} j$), if $j \in J$ then $i \in J$. 

\begin{proposition}
\label{extreme-ray-characterization}
Let $P$ be a poset on $\{1,2,\ldots,n\}$.  
\begin{enumerate}
\item[(i)]
The cone $\Kroot_P$ has extreme rays spanned by $\{ e_i-e_j\}_{i \lessdot_P j}.$ 
\item[(ii)] 
The cone $\Kweight_P$ has extreme rays spanned by the characteristic vectors
$$
e_J:=\chi_J=\sum_{j \in J} e_j
$$
for the connected nonempty order ideals $J$ in $P$.
\end{enumerate}
\end{proposition}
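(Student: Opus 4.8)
The plan is to treat the two parts separately, since they are genuinely different in flavor. For part (i), the cone $\Kroot_P$ is by definition $\RR_+\{e_i-e_j : i <_P j\}$, so the only issue is to show that the \emph{covering} relations $i \lessdot_P j$ already suffice to generate the cone, and that none of these covering generators is redundant. For the first point I would argue that if $i <_P j$ is not a covering relation, there is a saturated chain $i = k_0 \lessdot_P k_1 \lessdot_P \cdots \lessdot_P k_m = j$, and then $e_i - e_j = \sum_{t=1}^m (e_{k_{t-1}} - e_{k_t})$ is a nonnegative combination of covering generators; so $\Kroot_P = \RR_+\{e_i - e_j : i \lessdot_P j\}$. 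For the claim that each covering generator $e_i - e_j$ spans an extreme ray, I would exhibit a linear functional on $\Vroot$ that is nonnegative on $\Kroot_P$ and vanishes exactly on the ray $\RR_+(e_i - e_j)$; a natural candidate uses a linear extension $w$ of $P$ in which $i$ and $j$ are consecutive (possible precisely because $i \lessdot_P j$ is a covering relation), assigning weights that are strictly increasing along $w$ except for a single tie between $i$ and $j$. Nonnegativity on all generators $e_a - e_b$ with $a <_P b$ follows because $a$ precedes $b$ in $w$; the functional vanishes on $e_i - e_j$ by the tie; and one checks it is positive on every other covering generator, so the ray is exposed, hence extreme.

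For part (ii), I would first record that $\Kweight_P$ is defined by the inequalities $x_j \geq 0$ for all $j$ and $x_i \geq x_j$ for $i <_P j$, and observe directly that each $e_J$ with $J$ a nonempty connected order ideal does lie in $\Kweight_P$: the coordinates are $0/1$-valued, and $x_i \geq x_j$ can only fail if $i \notin J$ but $j \in J$, which is impossible since $J$ is an order ideal and $i <_P j$. Next I would show these vectors generate the cone. Given $x \in \Kweight_P$, sort the distinct positive values taken by its coordinates as $v_1 > v_2 > \cdots > v_k > 0$ and let $J_t = \{ j : x_j \geq v_t \}$; each $J_t$ is an order ideal (if $x_j \geq v_t$ and $i <_P j$ then $x_i \geq x_j \geq v_t$), and writing $v_{k+1} := 0$ one gets the telescoping decomposition $x = \sum_{t=1}^k (v_t - v_{t+1})\, e_{J_t}$ with all coefficients positive. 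Finally each $e_{J_t}$ is a sum of $e_{J'}$ over the connected components $J'$ of $J_t$, and each component of an order ideal is again an order ideal; so $x$ is a nonnegative combination of the $e_J$ for connected nonempty order ideals, proving $\Kweight_P = \RR_+\{ e_J : J \text{ a connected nonempty order ideal}\}$.

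It remains to see that no $e_J$ with $J$ connected and nonempty is redundant, i.e.\ that it spans an extreme ray. Here I would again exhibit an exposing functional: for a fixed such $J$, the functional $\ell_J(x) := \sum_{j \in J} x_j - |J| \cdot \max_{i \notin J, \ i \lessdot_P j \text{ for some } j \in J} x_i$ is a bit awkward, so instead I would use the cleaner approach via facets — the facet-defining inequalities of $\Kweight_P$ are among $x_j \geq 0$ and $x_i \geq x_j$ for $i \lessdot_P j$, and a vector spanning an extreme ray is characterized by the rank of the set of facet inequalities it satisfies with equality. One checks that $e_J$ satisfies with equality all of $x_j = 0$ for $j \notin J$ together with $x_i = x_j$ for every covering relation $i \lessdot_P j$ with both $i,j \in J$; connectivity of $J$ makes the latter relations tie all coordinates in $J$ together, so the common solution set of these equalities is exactly the line $\RR\, e_J$, forcing $\RR_+ e_J$ to be an extreme ray.

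The main obstacle I anticipate is the redundancy/extremality argument in part (ii): one must be careful that \emph{connectivity} of $J$ is exactly what is needed (a disconnected order ideal's characteristic vector is a sum of those of its components and hence not extreme), and that the facet description of $\Kweight_P$ really is the one written above — in particular that the only needed difference inequalities are the covering ones, which follows from the same saturated-chain telescoping used in part (i). Establishing that minimal set of facet inequalities cleanly, and then reading off extremality from the rank of the active constraints, is the delicate step; everything else is the two telescoping decompositions, which are routine.
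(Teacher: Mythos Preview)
Your proposal is correct and follows essentially the same approach as the paper: telescoping to covering relations plus a separating functional built from a linear extension with $i,j$ adjacent in (i), and the level-set decomposition plus connectivity of $J$ forcing the active defining equalities to cut out exactly the line $\RR e_J$ in (ii) (the paper makes this last step explicit by choosing a spanning tree in the Hasse diagram of $J$ to name $n-1$ independent hyperplanes). Your concern in the final paragraph about pinning down the exact facet description of $\Kweight_P$ is unnecessary: any collection of \emph{valid} inequalities on $\Kweight_P$---not necessarily facet-defining---whose equality hyperplanes intersect in the line $\RR e_J$ already certifies that $\RR_+ e_J$ is an extreme ray.
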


\begin{proof}
For (i), note $\Kroot_P$ is the cone nonnegatively spanned
by $\{e_i-e_j: i <_P j\}$, and since $i <_P j <_P k$
implies 
$$
e_i - e_k = (e_i -e_j) + (e_j - e_k) \in \RR_+\{e_i-e_j, e_j-k\},
$$
its extreme rays must be spanned by some subset of 
$\{e_i - e_j: i \lessdot_P j\}$.  On the other hand, for each covering
relation $i \lessdot_P j$, one can exhibit a linear functional $f$ that
vanishes on $e_i -e_j$ and is strictly negative on the rest of the vectors
spanning $\Kroot_P$ as follows.
Choose a linear extension
$w=(w(1),\ldots,w(n))$ in $\LLL(P)$ such that $i,j$ appear adjacent in the linear order, say
$w(k)=i$ and $w(k+1)=j$
and define the functional $f : \RR^n \to \RR$ by the values
$$
\begin{array}{rcll}
f(e_{w(m)})&=&m&\text{ for }m=1,2,\ldots,k-1 ;\\
f(e_{w(k)})=f(e_i) & =&k=f(e_j) = f(e_{w(k+1)}) ;\\
f(e_{w(m)})&=&m-1&\text{ for }m=k+2,k+3,\ldots,n.
\end{array}
$$

For (ii), note that $\Kweight_P$ is described by the system of inequalities
$$
\begin{cases}
x_i \geq 0 & \text{ for all }i;\\
x_i \geq x_j & \text{ for }i <_P j.
\end{cases}
$$
We first claim that $\Kweight_P$ is the nonnegative
span of characteristic vectors $e_J$ for order ideals $J$ of $P$:
if $x=(x_1,\ldots,x_n)$ lies in $\Kweight_P$, and its coordinates $x_i$
take on the distinct positive values
$
c_1 < c_2 < \cdots < c_t
$
then (setting $c_0:=0$), one has
$$
x  = \sum_{r=1}^t (c_r-c_{r-1}) e_{J_r}
$$
where $J_r$ is the order ideal of $P$ defined by
$$
J_r:=\{ j \in \{1,2,\ldots,n\}: x_j \geq c_r \}.
$$

Furthermore, if an order ideal $J$ of $P$ decomposes into
connected components as $J=\sqcup_i J^{(i)}$,
then each $J^{(i)}$ is itself a (connected) order ideal, and
$e_J = \sum_i e_{J^{(i)}}$.  

Therefore the extreme rays of the cone must be spanned by some subset of the vectors
$e_J$ for connected order ideals $J$.  On the other
hand, for any connected order ideal $J$, one can exhibit the line $\RR e_J$ 
spanned by $e_J$ as the intersection of $n-1$ linearly independent
hyperplanes that come from inequalities valid on $\Kweight_P$ as follows.  
Consider the Hasse diagram for $J$ as a connected graph,
and pick a spanning tree $T$ among its edges.  Then the line $\RR e_J$
is the set of solutions to the system
$$
\begin{cases}
x_i = 0   & \text{ for }i \notin J ;\\
x_i = x_j & \text{ for }i \lessdot_P j \text{ or }i \gtrdot_P j\text{ with }\{i,j\} \in T.
\end{cases}
$$
\end{proof}

Proposition~\ref{integral-valuation-prop} then immediately implies the following.
\begin{corollary}
\label{identify-denominators}
Let $P$ be a poset on $\{1,2,\ldots,n\}$.
\begin{enumerate}
\item[(i)]
If $P$ is disconnected, then the cone $\Kroot_P$ is not full-dimensional,
and $\Psi_P(\xx)=0$.  If $P$ is connected, the cone $\Kroot_P$ is 
full-dimensional, and the smallest denominator for 
$\Psi_P(\xx)$ is $\prod_{i \lessdot_P j}(x_i-x_j)$.
\item[(ii)] 
The cone $\Kweight_P$ is always full-dimensional, and the smallest denominator for 
$\Phi_P(\xx)$ is $\prod_J \left(\sum_{j \in J} x_j \right)$
where the product runs over all connected order ideals $J$ in $P$. $\qed$
\end{enumerate}
\end{corollary}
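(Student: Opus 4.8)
The plan is to read off both statements directly from Propositions~\ref{identification-proposition}, \ref{extreme-ray-characterization}, and \ref{integral-valuation-prop}. By Proposition~\ref{identification-proposition} we have $\Psi_P(\xx)=\sss(\Kroot_P;\xx)$ and $\Phi_P(\xx)=\sss(\Kweight_P;\xx)$, so everything reduces to deciding whether each cone is pointed and full-dimensional, and then invoking the extreme-ray descriptions already established in Proposition~\ref{extreme-ray-characterization}.

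For part (i), I would first note that $\Kroot_P$ lives in the $(n-1)$-dimensional space $\Vroot$. If $P$ is disconnected, with connected components of sizes $n_1,\dots,n_c$ and $c\geq 2$, then every generator $e_i-e_j$ of $\Kroot_P$ has $i,j$ in a common component, so $\Kroot_P$ is contained in the direct sum over components of the sum-zero subspaces supported on each component, a space of dimension $\sum_k(n_k-1)=n-c\leq n-2$; hence $\Kroot_P$ is not full-dimensional and Proposition~\ref{integral-valuation-prop}(ii) gives $\Psi_P(\xx)=0$. If $P$ is connected, its Hasse diagram is a connected graph on $n$ vertices, and the standard fact that the incidence matrix of a connected graph has rank $n-1$ shows that the vectors $\{e_i-e_j : i\lessdot_P j\}$ already span all of $\Vroot$, so $\Kroot_P$ is full-dimensional; it is pointed because, choosing $\dd=(d_1,\dots,d_n)$ with $d_i<d_j$ whenever $i<_P j$ (possible since $P$ has a linear extension), the functional $\xx\mapsto\sum_i d_i x_i$ is strictly negative on each generator, so $\Kroot_P$ contains no line. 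Now Proposition~\ref{integral-valuation-prop}(iv), fed the extreme rays from Proposition~\ref{extreme-ray-characterization}(i), yields the smallest denominator $\prod_{i\lessdot_P j}\langle\xx,e_i-e_j\rangle=\prod_{i\lessdot_P j}(x_i-x_j)$.

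For part (ii), $\Kweight_P$ sits inside $\RR_+^n$, hence is automatically pointed. It is full-dimensional because it has nonempty interior: taking $x_i:=n+1-|\{k:k\leq_P i\}|$ gives $x_i>0$ for all $i$, and $x_i>x_j$ strictly whenever $i<_P j$, since then the principal order ideal below $i$ is strictly contained in that below $j$; thus $x$ satisfies all defining inequalities of $\Kweight_P$ strictly. Applying Proposition~\ref{integral-valuation-prop}(iv) with the extreme rays supplied by Proposition~\ref{extreme-ray-characterization}(ii), namely the characteristic vectors $e_J$ of the connected nonempty order ideals $J$ of $P$, gives the smallest denominator $\prod_J\langle\xx,e_J\rangle=\prod_J\bigl(\sum_{j\in J}x_j\bigr)$.

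All of this is genuinely immediate once the earlier propositions are available, so there is no serious obstacle; the only points needing a moment's care -- and hence the closest thing to an obstacle -- are the two dimensionality checks: the observation that the edge vectors of a connected graph span the sum-zero hyperplane (for $\Kroot_P$), and the explicit exhibition of an interior point (for $\Kweight_P$), together with the brief verification that $\Kroot_P$ is pointed when $P$ is connected.
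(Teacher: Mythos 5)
Your proposal is correct and matches the paper's intent exactly: the paper offers no written proof, simply asserting that Proposition~\ref{integral-valuation-prop} (together with the extreme-ray description in Proposition~\ref{extreme-ray-characterization}) immediately implies the corollary, and your argument supplies precisely the routine checks (dimension count via the incidence matrix of a connected graph, pointedness via a linear-extension functional, an explicit interior point for $\Kweight_P$) that make that "immediately" rigorous. No gaps.
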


Theorem C is now simply a consequence of the analysis of the
simplicial cases.
\begin{corollary}
\label{simplicial-characterization}
The cone $\Kroot_P$ is simplicial if and only if the Hasse diagram for $P$
contains no cycles.  In this case it is also unimodular.  Hence 
the Hasse diagram for $P$ is a spanning tree on $\{1,2,\ldots,n\}$, 
if and only if 
$$
\Psi_P(\xx) = \frac{1}{\prod_{i \lessdot_P j}(x_i-x_j)}.
$$

The cone $\Kweight_P$ is simplicial if and only if $P$ is a forest
in the sense that every element is covered by at most one other element.
In this case it is also unimodular.  Hence $P$ is a forest if and only if
$$
\Phi_P(\xx) = \frac{1}{\prod_{i=1}^n \left(\sum_{j \leq_P i} x_j \right)}.
$$
\end{corollary}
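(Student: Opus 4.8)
The plan is to treat the two cones in parallel, in each case: (a) reading off when the cone is simplicial from the extreme-ray description of Proposition~\ref{extreme-ray-characterization}; (b) checking unimodularity in the simplicial case; (c) deducing the closed form for $\Psi_P$ or $\Phi_P$ from Proposition~\ref{integral-valuation-prop}(v) together with Proposition~\ref{identification-proposition}; and (d) proving the converse using the smallest-denominator descriptions in Corollary~\ref{identify-denominators} and the fact that $\sss(K;\xx)$ is homogeneous of degree $-\dim_\RR K$ (equivalently, the total-residue discussion of Section~\ref{total-residue-section}). The combinatorial heart of the argument will be identifying, for $\Kweight_P$, exactly when the connected order ideals number exactly $n$.

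For $\Kroot_P$: Proposition~\ref{extreme-ray-characterization}(i) says its extreme rays are spanned by the already-primitive vectors $\{e_i-e_j\}_{i\lessdot_P j}$, indexed by the edges of the Hasse diagram. By the standard fact that edge vectors $e_i-e_j$ of a graph on $\{1,\dots,n\}$ are linearly independent precisely when the graph is acyclic, $\Kroot_P$ is simplicial iff the Hasse diagram has no cycle. If moreover it is connected, hence a spanning tree, an induction on $n$ (deleting a leaf) shows $\{e_i-e_j\}_{i\lessdot_P j}$ is a $\ZZ$-basis of $\Lroot$; since Lebesgue measure on $\Vroot$ was normalized so that one spanning-tree basis spans volume $1$, every spanning-tree basis does, so the cone is full-dimensional and unimodular, and Proposition~\ref{integral-valuation-prop}(v) gives $\Psi_P(\xx)=\sss(\Kroot_P;\xx)=1/\prod_{i\lessdot_P j}(x_i-x_j)$. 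Conversely, suppose this identity holds. Then $\Psi_P\ne 0$, so $P$ is connected and $\Kroot_P$ is full-dimensional in $\Vroot$ (Corollary~\ref{identify-denominators}(i)); hence $\Psi_P(\xx)$ is homogeneous of degree $-(n-1)$, while its smallest denominator is $\prod_{i\lessdot_P j}(x_i-x_j)$, of degree $\#\{i\lessdot_P j\}$. The identity then forces the numerator in lowest terms to be the constant $1$, of degree $0$; equating degrees gives $\#\{i\lessdot_P j\}=n-1$, and a connected graph with $n-1$ edges is a spanning tree.

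For $\Kweight_P$: the cone is always full-dimensional of dimension $n$ (Corollary~\ref{identify-denominators}(ii)), and by Proposition~\ref{extreme-ray-characterization}(ii) its extreme rays are the vectors $e_J$ as $J$ ranges over the connected nonempty order ideals of $P$; hence it is simplicial iff there are exactly $n$ such ideals. The $n$ principal order ideals $\langle i\rangle:=\{j:j\le_P i\}$ are connected and pairwise distinct, so the count is always $\ge n$, with equality iff every connected nonempty order ideal is principal. This last property is equivalent to $P$ being a forest: if $P$ is a forest, then (every element being covered by at most one other) the principal up-set of each element is a chain, so incomparable elements have disjoint principal down-sets with no covering relation joining them; hence a connected order ideal cannot have two incomparable maximal elements, so has a unique maximal element and is principal. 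Conversely, if some $i$ is covered by distinct $j_1,j_2$, then $\langle j_1\rangle\cup\langle j_2\rangle$ is a connected order ideal with the two incomparable maximal elements $j_1,j_2$, hence not principal. In the forest case, ordering elements by a linear extension makes the matrix whose rows are the $e_{\langle i\rangle}$ triangular with unit diagonal, so $\{e_{\langle i\rangle}\}$ is a $\ZZ$-basis of $\Lweight=\ZZ^n$, the cone is unimodular, and Proposition~\ref{integral-valuation-prop}(v) gives $\Phi_P(\xx)=\sss(\Kweight_P;\xx)=1/\prod_{i=1}^n\bigl(\sum_{j\le_P i}x_j\bigr)$. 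Conversely, if $\Phi_P(\xx)$ equals this, then since the smallest denominator of $\Phi_P(\xx)$ is $\prod_J\bigl(\sum_{j\in J}x_j\bigr)$ over all connected nonempty order ideals $J$ (Corollary~\ref{identify-denominators}(ii)), with the forms $\sum_{j\in J}x_j$ pairwise-distinct irreducibles, the number of such ideals must equal $n$, whence $P$ is a forest by the equivalence just proven.

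I expect the main obstacle to be the structural lemma that, for a poset $P$, every connected nonempty order ideal is principal if and only if $P$ is a forest — in particular the forward implication, which hinges on the observation that in a forest the up-set of any element is a chain, so that disjointness of principal down-sets forces a connected order ideal to have a unique maximal element. The other ingredients — the cycle-space fact for $\Kroot_P$ and the two unimodularity arguments (leaf-deletion for $\Lroot$, triangularity with respect to a linear extension for $\Lweight$) — are routine.
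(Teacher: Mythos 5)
Your proof is correct and follows essentially the same route as the paper: both arguments rest on Proposition~\ref{extreme-ray-characterization}, the leaf-induction and unitriangular-matrix unimodularity checks, Proposition~\ref{integral-valuation-prop}(v), and Corollary~\ref{identify-denominators} for the converses. The only divergence is minor: where you show $\Kweight_P$ is non-simplicial for a non-forest by counting extreme rays (your non-principal connected ideal $P_{\leq j_1}\cup P_{\leq j_2}$ forces more than $n$ of them in an $n$-dimensional cone), the paper instead exhibits the explicit dependence $e_{P_{\leq i}}+e_{P_{\leq j}}=e_{P_{\leq i}\cup P_{\leq j}}+\sum_{\ell}e_{J^{(\ell)}}$ among extreme-ray generators; both are valid, and you additionally spell out the degree-count and lowest-terms arguments for the converse implications that the paper leaves implicit.
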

\begin{proof}
According to Proposition~\ref{extreme-ray-characterization},
the extreme rays of the cone $\Kroot_P$ are the vectors
$\{e_i - e_j: i \lessdot_P j\}$, which are linearly
independent if and only if there are no cycles in the Hasse diagram
for $P$.  Furthermore, when there are no such cycles, an easy leaf induction
shows that the cone is unimodular.  The rest of the assertions follow.

To analyze $\Kweight_P$, first note that when $P$ is a forest, 
the connected order ideals of $P$ are 
exactly the principal order ideals $P_{\leq i}:=\{j: j \leq_P i\}$ 
for $i=1,2,\ldots,n$.  Not only are their characteristic vectors $e_{P_{\leq i}}$ 
linearly independent, but if one orders the labels $i$ according to any 
linear extension of $P$, one finds that these vectors $e_{P_{\leq i}}$ form
the columns of a unitriangular matrix, which is therefore unimodular.

When $P$ is not a forest, it remains to show that the cone $\Kweight_P$
cannot be simplicial.  There must exist two elements $i,j$ incomparable in $P$ 
whose principal order ideals have nonempty intersection $P_{\leq i} \cap P_{\leq j}$.
Decompose  
$
P_{\leq i} \cap P_{\leq j} = \sqcup_{\ell=1}^t J^{(\ell)}
$
into its connected components $J^{(\ell)}$.  Then each of these
components $J^{(\ell)}$ will be a nonempty connected ideal, as will be $P_{\leq i},
P_{\leq j}$ and their union $P_{\leq i} \cup P_{\leq j}$.  This leads to the following
linear relation:
$$
e_{P_{\leq i}} + e_{P_{\leq j}}
 = e_{P_{\leq i} \cup P_{\leq j}} + \sum_{i=1}^t e_{J^{(\ell)}}.
$$
Since Proposition~\ref{extreme-ray-characterization} 
implies the vectors involved in this relation
all span extreme rays of the cone $\Kweight_P$, the
cone is not simplicial in this case.
\end{proof}

An interesting special case of the preceding result leads
to a special role played by {\it dominant} or {\it $132$-avoiding}
permutations when considering posets of order dimension two,
that is, the subposets of the componentwise order on $\RR^2$.
Bj\"orner and Wachs \cite[Theorems 6.8, 6.9]{BjornerWachs} showed
that $P$ has order dimension two if and only if one can relabel the 
elements $i$ in $[n]$ so that $\LLL(P)$ forms
a principal order ideal $[e,w]$ in the {\it weak Bruhat order} on $\symm_n$.

\begin{corollary}
\label{dominant-permutation-corollary}
When $\LLL(P)=[e,w]$ for some permutation $w$, the cone $\Kweight_P$ is simplicial if and only
if $w$ is $132$-avoiding.
\end{corollary}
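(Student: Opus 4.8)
The plan is to reduce the statement to Corollary~\ref{simplicial-characterization}, which says that $\Kweight_P$ is simplicial exactly when $P$ is a forest, i.e.\ when every element of $P$ is covered by at most one element. So it suffices to prove: if $\LLL(P)=[e,w]$, then $P$ is a forest if and only if $w$ avoids the pattern $132$ — meaning there are no positions $i<j<k$ with $w(i)<w(k)<w(j)$.

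First I would make the poset $P$ explicit in terms of $w$. Since the identity permutation $e$ lies in $\LLL(P)=[e,w]$, the labelling of $P$ is natural: $i<_P j$ forces $i<j$. A permutation $v$ lies in $\LLL(P)$ exactly when $i$ precedes $j$ in the one-line word of $v$ whenever $i<_P j$; and $v$ lies in $[e,w]$ exactly when, for all $i<j$, $i$ precedes $j$ in $v$ whenever $i$ precedes $j$ in $w$ (every non-inversion of $w$ is a non-inversion of $v$). The collection of pairs $(i,j)$ with $i<j$ and $i$ before $j$ in $w$ is transitively closed, hence is the strict order of a uniquely determined poset on $\{1,\dots,n\}$, and since a poset is recovered from its set of linear extensions, comparing the two descriptions gives
$$
i<_P j \quad\Longleftrightarrow\quad i<j \ \text{ and } \ i \text{ precedes } j \text{ in the word } w .
$$
I would also record the elementary observation that a poset $Q$ is a forest if and only if, for each $a$, the principal filter $\{b:a<_Q b\}$ is a chain: in a forest one ascends from $a$ through uniquely determined covers, so that filter is totally ordered; conversely, two distinct elements covering $a$ are incomparable and both lie in this filter.

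Now the two implications. If $w$ contains the pattern $132$ at positions $i<j<k$ with $w(i)<w(k)<w(j)$, set $a:=w(i)$, $b:=w(k)$, $c:=w(j)$, so $a<b<c$. By the displayed formula, $a<_P b$ (as $a<b$ and $a$ precedes $b$ in $w$) and $a<_P c$ (as $a<c$ and $a$ precedes $c$ in $w$), while $b$ and $c$ are incomparable in $P$ (as $b<c$ but $c$ precedes $b$ in $w$); hence the principal filter of $a$ is not a chain and $P$ is not a forest. Conversely, if $P$ is not a forest, choose $a$ and incomparable elements $j,k$ of its principal filter, labelled so that $j<k$. Then $a$ precedes both $j$ and $k$ in $w$, and from $j<k$ together with $j\not<_P k$ we deduce that $k$ precedes $j$ in $w$; reading $w$ from left to right at the positions of $a$, then $k$, then $j$, we see the values $a,k,j$, which, since $a<j<k$, is an occurrence of $132$. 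Thus $P$ is a forest if and only if $w$ is $132$-avoiding, and with Corollary~\ref{simplicial-characterization} the claim follows.

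The one step requiring care is the identification of $P$ from $w$: one must fix which convention of the weak Bruhat order is in force (so $P$ is recovered via the displayed formula rather than from $w^{-1}$ — which in any case would not affect the conclusion, since $w$ is $132$-avoiding if and only if $w^{-1}$ is), and one must check that sets of non-inversions are transitively closed so that they genuinely define posets. After that there is no real obstacle: the rest is a short pattern-matching argument.
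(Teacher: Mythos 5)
Your proposal is correct and follows essentially the same route as the paper: identify the order relations of $P$ as the non-inversion pairs of $w$, invoke Corollary~\ref{simplicial-characterization} to reduce simpliciality of $\Kweight_P$ to $P$ being a forest, and then match "two incomparable elements with a common lower bound" with an occurrence of the pattern $132$. The only cosmetic difference is that you phrase the forest condition via principal filters being chains and spell out the transitivity of the non-inversion set, points the paper leaves as "one can check."
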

\begin{proof}
When $\LLL(P)=[e,w]$, one can check that $P$ has the following order relations: $i <_P j$ 
exactly when $i <_\ZZ j$ and $(i,j)$ are {\it noninversion} values for $w$, that is,
$w^{-1}(i) < w^{-1}(j)$, or $i$ appears earlier than $j$ in the list
notation $(w(1),\ldots,w(n))$.

By Corollary~\ref{simplicial-characterization}, the cone $P$ is not simplicial if and only if $P$ is not a
forest, that is, if and only if there exist $i,j$ which are
incomparable in $P$ and have a common lower bound $h <_P i,j$.  Hence by
the previous paragraph, one must have $h <_\ZZ i$ and $h <_\ZZ j$, with
$h$ appearing earlier than both $i,j$ in the list notation for $w$.
Without loss of generality $i <_\ZZ j$ by reindexing, and then
the incomparability of $i, j$ in $P$ forces $j$ to appear earlier than
$i$ in the list notation.  That is $h <_\ZZ i <_\ZZ j$ occur
in the order $(h,j,i)$ within $w$, forming an occurrence of the pattern $(1,3,2)$.
\end{proof}

\begin{example}
Among the permutations $w$ in $\symm_3$, five out of the six
are dominant or $132$-avoiding; only $w=(1,3,2)$ is not.
It has $[e,w]=\LLL(P)=\{(1,2,3),(1,3,2)\}$, and 
$\Kweight_P$ is the non-simplicial cone 
considered in Example~\ref{cone-theory-example}, having
extreme rays spanned by $\{e_1, \, e_1+e_2, \, e_1+e_3, \, e_1+e_2+e_3\}$,
and
$$
\sss(K;\xx)
    = \frac{2x_1+x_2+x_3}{x_1(x_1+x_2)(x_1+x_3)(x_1+x_2+x_3)}.
$$
\end{example}

\section{$P$-partitions, forests, and the 
Hilbert series for $\Kweight_P$}
\label{P-partition-section}
We digress here to discuss the Hilbert series for the
affine semigroup $K \cap L$ for the cone
$K=\Kweight_P$ inside the lattice $L=\Lweight$.
Analyzing this when $P$ is a forest leads to a common generalization
of both Theorem C and the ``maj'' hook formula for forests of Bj\"orner and
Wachs.

  One can think of as $K \cap L$ as the semigroup of {\it weak $P$-partitions}
in the sense of Stanley \cite[\S 4.5]{Stanley-EC1}, namely functions $f: P \rightarrow \NN$
which are order-reversing: $f(i) \geq f(j)$  for $i <_P j.$
Within this semigroup $K \cap L$, Stanley also considers the semigroup ideal 
$\AAA(P)$ of {\it $P$-partitions} (in the strong sense), 
that is, those order-reversing functions $f:P \rightarrow \NN$ 
which in addition satisfy the strict
inequality $f(i) > f(j)$ whenever $(i,j)$ is in the
{\it descent set} 
$$
\Des(P):=\{(i,j): i \lessdot_P j \text{ and } i>_\ZZ j\}.
$$
The main lemma of $P$-partition theory \cite[Theorem 7.19.4]{Stanley-EC1}
asserts the disjoint decomposition\footnote{This disjoint decomposition
is closely related to the triangulation of $\Kweight_P$ that
appeared implicitly in the proof of Proposition~\ref{identification-proposition},
modelled on Gessel's proof of the main $P$-partition
lemma in \cite[Theorem 1]{Gessel}).}
$$
\AAA(P) = \bigsqcup_{w \in \LLL(P)} \AAA(w).
$$
Equivalently, in terms of the Hilbert series of the semigroup ideal $\AAA(P)$
defined by 
$$
\HHH(\AAA(P);\XX):=\sum_{f \in \AAA(P)} \XX^f
$$
where $\XX^f:=\prod_{i=1}^n X_i^{f(i)}$,
this says that 
\begin{equation}
\label{main-P-partition-equation}
\HHH(\AAA(P);\XX) = \sum_{w \in \LLL(P)} \HHH(\AAA(P_w),\XX).
\end{equation}
This simple equation is more powerful than it looks at first glance.
Define the notation
$\XX^A:=\prod_{j \in A} X_j$ for subsets $A \subset \{1,2,\ldots,n\}$.

\begin{proposition}
For any forest poset $P$ on $\{1,2,\ldots,n\}$, one has
\begin{equation}
\label{principal-ideal-equation}
\HHH(\AAA(P);\XX) = \frac{ \prod_{(i,j) \in \Des(P)} \XX^{P_{\leq i}} } { \prod_{i=1}^n (1-\XX^{P_{\leq i}}) }.
\end{equation}
In particular, \eqref{main-P-partition-equation} becomes
\begin{equation}
\label{fine-hook-formula-for-forests}
\frac{ \prod_{i \in \Des(P)} \XX^{P_{\leq i}} } { \prod_{i=1}^n (1-\XX^{P_{\leq i}}) }
=
\sum_{w \in \LLL(P)} \frac{ \prod_{i: w_i > w_{i+1}} \XX^{\{w_1,w_2,\ldots,w_i\}} } 
                           { \prod_{i=1}^n (1-\XX^{\{w_1,w_2,\ldots,w_i\}}) }.
\end{equation}
\end{proposition}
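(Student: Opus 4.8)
The plan is to prove the closed form \eqref{principal-ideal-equation} for the Hilbert series of the $P$-partition ideal $\AAA(P)$ when $P$ is a forest, since the second identity \eqref{fine-hook-formula-for-forests} then follows immediately by substituting \eqref{principal-ideal-equation} into the main $P$-partition equation \eqref{main-P-partition-equation}, after noting that a linear order $P_w$ is in particular a forest (a chain), whose principal order ideals are $(P_w)_{\leq w_i}=\{w_1,\ldots,w_i\}$ and whose descent set $\Des(P_w)$ consists of the covering pairs $(w_i,w_{i+1})$ with $w_i>_\ZZ w_{i+1}$, i.e.\ exactly the indices $i$ with $w_i>w_{i+1}$.

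\medskip

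To prove \eqref{principal-ideal-equation} itself, I would argue algebraically via the affine semigroup ring, exploiting that $P$ is a forest. By Corollary~\ref{simplicial-characterization}, when $P$ is a forest the cone $\Kweight_P$ is simplicial and unimodular, with extreme rays spanned by the characteristic vectors $e_{P_{\leq i}}$ of the principal order ideals, $i=1,\ldots,n$. Hence the semigroup $K\cap L$ of weak $P$-partitions is freely generated (as a monoid) by $\{e_{P_{\leq i}}\}_{i=1}^n$ up to the unimodularity statement: every weak $P$-partition $f$ has a \emph{unique} expression $f=\sum_i c_i\, e_{P_{\leq i}}$ with $c_i\in\NN$. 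Concretely, ordering the elements along a linear extension so that the matrix $[e_{P_{\leq 1}},\ldots,e_{P_{\leq n}}]$ is unitriangular, the coefficients are $c_i=f(i)-\sum_{j:\, i\lessdot_P j} f(j)$, which is $\geq 0$ precisely because $f$ is order-reversing. The key observation is then: $f$ lies in the \emph{strong} $P$-partition ideal $\AAA(P)$ if and only if, for every descent $(i,j)\in\Des(P)$, one has $f(i)>f(j)$, which in terms of the coefficients means $c_i\geq 1$ for every $i$ appearing as the top of a descent edge --- that is, $c_i\ge 1$ whenever $(i,j)\in\Des(P)$ for some $j$ (in a forest each such $i$ has a unique cover $j$, so there is no ambiguity).

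\medskip

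Putting this together: $\HHH(\AAA(P);\XX)=\sum_{f\in\AAA(P)}\XX^f=\sum_{(c_1,\ldots,c_n)} \prod_i (\XX^{P_{\leq i}})^{c_i}$, where $c_i$ ranges over $\NN$ if $i$ is not the top of a descent edge, and over $\{1,2,\ldots\}$ if $(i,j)\in\Des(P)$ for its unique cover $j$. Summing the geometric series in each coordinate independently gives
\[
\HHH(\AAA(P);\XX)
= \Bigl(\prod_{(i,j)\in\Des(P)} \frac{\XX^{P_{\leq i}}}{1-\XX^{P_{\leq i}}}\Bigr)
  \Bigl(\prod_{i:\, (i,\cdot)\notin\Des(P)} \frac{1}{1-\XX^{P_{\leq i}}}\Bigr)
= \frac{\prod_{(i,j)\in\Des(P)}\XX^{P_{\leq i}}}{\prod_{i=1}^n(1-\XX^{P_{\leq i}})},
\]
which is \eqref{principal-ideal-equation}. (One should phrase the generating-function manipulation at the level of the formal module $\ZZ\{\{L\}\}$ from Section~\ref{semigroup-ring-section} and then pass to the rational function it sums to, so that ``summing the geometric series'' is legitimate; this is the same bookkeeping already used in Proposition~\ref{integer-points-valuation-prop}(v) for the semi-open parallelepiped.)

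\medskip

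The main obstacle --- really the only delicate point --- is verifying the equivalence ``$f\in\AAA(P)$ iff $c_i\ge 1$ for each top-of-descent $i$'' together with the claim that this is a clean coordinatewise condition. The subtlety is that decreasing the value $f(i)$ relative to $f(j)$ along a descent edge could a priori interact with the other constraints; but because $P$ is a forest, the support pattern of the generators $e_{P_{\leq i}}$ is unitriangular and each element $i$ sits below exactly one cover, so the change of variables $f\mapsto(c_1,\ldots,c_n)$ decouples the constraints entirely. I would spell this out by a short induction on $n$ peeling off a maximal element of $P$, or simply by writing $f(i)-f(j)=c_i$ for the unique cover $j$ of $i$ (and $f(i)=\sum_{\ell\in P_{\leq i}}$ of the relevant $c$'s for a maximal $i$), from which $f(i)>f(j)\iff c_i\geq 1$ is immediate. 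Everything else is routine geometric-series summation.
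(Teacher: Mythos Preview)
Your proof is correct and follows essentially the same approach as the paper. The paper phrases the key step as ``$\AAA(P)$ is a principal ideal in $K\cap L$, generated by $f_0=\sum_{(i,j)\in\Des(P)} e_{P_{\leq i}}$,'' which is exactly your coordinate statement ``$c_i\geq 1$ for each $i$ with $(i,j)\in\Des(P)$'' after the unimodular change of basis; both then multiply $\XX^{f_0}$ against the free-semigroup Hilbert series $\prod_i(1-\XX^{P_{\leq i}})^{-1}$ coming from Corollary~\ref{simplicial-characterization}, and derive \eqref{fine-hook-formula-for-forests} by specializing to the chain $P_w$.
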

\begin{proof}
When $P$ is a forest, 
we claim that $\AAA(P)$ is actually a {\it principal ideal} within $K \cap L$,
generated by the $P$-partition $f_0$ for which $f_0(i)$ is the number
of descent edges encountered along the unique path in the Hasse diagram 
from $i$ to a maximal element of $P$.  Alternatively $f_0$ is the sum
of characteristic functions of the subtrees $P_{\leq i}$ for which
one has $(i,j)$ in $\Des(P)$ (here $j$ is the unique element covering $i$ in $P$).
In other words, $\AAA(P)=f_0+K \cap L$, and consequently,
$$
\HHH(\AAA(P);\XX) 
= \XX^{f_0} \cdot \HHH(K;\XX)
= \left( \prod_{(i,j) \in \Des(P)} \XX^{P_{\leq i}} \right) 
    \cdot \HHH(K;\XX) .
$$
But then Corollary~\ref{simplicial-characterization} implies that $K \cap L$
is a unimodular cone having extreme rays spanned by the characteristic vectors of the
subtrees $P_{\leq i}$, and hence
\begin{equation}
\label{unimodular-hilb-consequence}
\HHH(K;\XX) = \prod_{i=1}^n (1-\XX^{P_{\leq i}}).
\end{equation}
The rest follows from the observation that when one considers a permutation $w$
as a linearly ordered poset $P_w$ 
having $w(1) <_{P_w} \cdots <_{P_w} w(n)$, it is an example of a forest, 
in which $P_{\leq i}=\{w(1),w(2),\ldots,w(i)\}$. 
\end{proof}

This has two interesting corollaries.  The first is that by applying the total residue
operator discussed in Section~\ref{total-residue-section} 
to \eqref{unimodular-hilb-consequence}, 
one obtains a second derivation of Theorem C.

The second is that by setting $X_j=q$ for all $j$ in equation 
\eqref{fine-hook-formula-for-forests}, 
one immediately deduces the major index $q$-hook formula for forests of 
Bj\"orner and Wachs \cite[Theorem 1.2]{BjornerWachs}:
\begin{corollary}
When $P$ is a forest,
$$
\sum_{w \in \LLL(P)} q^{\maj(w)} 
= q^{\maj(P)} \frac{[n]!_q}{ \prod_{i=1}^n [h(i)]_q }
$$
where 
$$
\begin{aligned}
\maj(P)&:=\sum_{(i,j) \in \Des(P)} |P_{\leq i}|,\\
h(i)&:=|P_{\leq i}|,\\
[n]_q&:=\frac{1-q^n}{1-q}=1+q+q^2+\cdots+q^{n-1},\\
[n]!_q&:=[n]_q [n-1]_q \cdots [2]_q [1]_q. \qed
\end{aligned}
$$
\end{corollary}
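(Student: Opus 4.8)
The plan is to derive the corollary by the single diagonal substitution $X_j \mapsto q$ (for all $j \in \{1,2,\ldots,n\}$) in the rational-function identity \eqref{fine-hook-formula-for-forests}, followed by a routine conversion of the factors $1-q^k$ into $q$-integers.

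First I would justify that this specialization is legitimate. The starting identity \eqref{main-P-partition-equation} is an equality of formal power series in the variables $X_j$; after setting all $X_j = q$, both sides become well-defined formal power series in $q$, since for each integer $k$ there are only finitely many $f$ in $\AAA(P)$ (and finitely many in each $\AAA(P_w)$) with $\sum_i f(i) = k$. Summing, one obtains an identity of rational functions in $q$ whose denominators $1-q^{h(i)}$ and $1-q^i$ are nonzero, so it is equivalent — and cleaner — to specialize \eqref{fine-hook-formula-for-forests} directly.

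Next comes the term-by-term specialization. On the left-hand side, $\XX^{P_{\leq i}} = \prod_{j \in P_{\leq i}} X_j$ becomes $q^{|P_{\leq i}|} = q^{h(i)}$, so the numerator $\prod_{(i,j)\in\Des(P)} \XX^{P_{\leq i}}$ becomes $q^{\maj(P)}$ with $\maj(P) = \sum_{(i,j)\in\Des(P)} |P_{\leq i}|$, and the denominator becomes $\prod_{i=1}^n (1-q^{h(i)})$. On the right-hand side, for a fixed $w \in \LLL(P)$ one has $\XX^{\{w_1,\ldots,w_i\}} \mapsto q^i$, so the numerator exponent is $\sum_{i:\, w_i > w_{i+1}} i = \maj(w)$, and — since $\{w_1,\ldots,w_n\} = \{1,\ldots,n\}$ — the denominator becomes $\prod_{i=1}^n (1-q^i)$, independent of $w$. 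Thus \eqref{fine-hook-formula-for-forests} collapses to
$$
\frac{q^{\maj(P)}}{\prod_{i=1}^n (1-q^{h(i)})} = \frac{\sum_{w \in \LLL(P)} q^{\maj(w)}}{\prod_{i=1}^n (1-q^i)}.
$$

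Finally I would solve for $\sum_{w\in\LLL(P)} q^{\maj(w)}$ and rewrite the resulting ratio using $1-q^k = (1-q)\,[k]_q$: the common factor $(1-q)^n$ in $\prod_{i=1}^n (1-q^i) = (1-q)^n [n]!_q$ and in $\prod_{i=1}^n (1-q^{h(i)}) = (1-q)^n \prod_{i=1}^n [h(i)]_q$ cancels, yielding
$$
\sum_{w \in \LLL(P)} q^{\maj(w)} = q^{\maj(P)}\,\frac{[n]!_q}{\prod_{i=1}^n [h(i)]_q},
$$
as claimed. There is no genuine obstacle here beyond bookkeeping; the one step that warrants a moment of care is the justification in the second paragraph that the multivariate identity \eqref{fine-hook-formula-for-forests} survives the diagonal specialization $X_j \mapsto q$, which is handled by the finiteness remark made there.
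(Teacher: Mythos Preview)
Your argument is correct and matches the paper's own derivation exactly: the corollary is obtained by the diagonal specialization $X_j \mapsto q$ in equation~\eqref{fine-hook-formula-for-forests}, followed by the routine rewriting of $1-q^k$ as $(1-q)[k]_q$. The paper states this in a single sentence without spelling out the bookkeeping you provide, but the substance is identical.
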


\section{Generators for the affine semigroups}
\label{semigroup-generators-section}

The two families of cones $\Kroot_P, \Kweight_P$ share a
pleasant property: the generating sets for
their affine semigroups are as small as possible. This will be
used in Section \ref{sect:anlalysis_semigroup}.

\begin{proposition}
\label{Hilbert-bases-for-cones}
For $P$ any poset on $\{1,2,\ldots,n\}$,
both cones $K=\Kroot_P, \Kweight_P$, and
the appropriate lattices $L=\Lroot, \Lweight$
have the affine semigroup $K \cap L$ generated
by the primitive lattice vectors (the vectors nearest
the origin) lying on the extreme rays of $K$.
\end{proposition}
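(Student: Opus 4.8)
For the root cone $\Kroot_P$, Proposition~\ref{extreme-ray-characterization}(i) tells us the extreme rays are spanned by the primitive vectors $e_i-e_j$ for covering relations $i\lessdot_P j$. So I must show that every lattice point in $\Kroot_P\cap\Lroot$ is a nonnegative \emph{integer} combination of these. First I would use the triangulation machinery: when $P$ is connected (otherwise $\Kroot_P$ is not full-dimensional and we may argue componentwise anyway), $\Kroot_P$ is the union of simplicial subcones $K_j$ whose extreme rays are among the $\{e_i-e_j\}_{i\lessdot_P j}$. The key claim is that each such simplicial cone $K_j$ is \emph{unimodular} with respect to $\Lroot$; given this, any lattice point lies in some $K_j$ and is automatically a nonnegative integer combination of that cone's (primitive) ray generators. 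The unimodularity of the simplicial pieces should follow by the same ``leaf induction'' used in Corollary~\ref{simplicial-characterization}: a simplicial subcone of $\Kroot_P$ corresponds to an acyclic subgraph (a forest) of the Hasse diagram, and the vectors $\{e_i-e_j\}$ along a forest of edges form part of a $\ZZ$-basis of the root lattice $\{x:\sum x_i=0\}$ — this is the standard fact that edge vectors of a forest are unimodular (the incidence-matrix argument). A cleaner route: since $\Kroot_P$ is the union of the unimodular cones $K_\pi$ when $P$ is a skew-diagram poset, and more generally one can always triangulate using acyclic edge-subsets, I invoke that any triangulation of a pointed cone into unimodular simplices forces the Hilbert basis to be exactly the primitive ray generators.

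For the weight cone $\Kweight_P$, Proposition~\ref{extreme-ray-characterization}(ii) identifies the primitive ray generators as the characteristic vectors $e_J=\chi_J$ of connected nonempty order ideals $J$ of $P$. Here I would argue directly from the explicit decomposition already exhibited in the proof of Proposition~\ref{extreme-ray-characterization}(ii): given $x\in\Kweight_P\cap\ZZ^n$, its coordinates take finitely many distinct positive integer values $c_1<c_2<\cdots<c_t$, and with $c_0:=0$ one has $x=\sum_{r=1}^t (c_r-c_{r-1})\,e_{J_r}$ where $J_r=\{j: x_j\ge c_r\}$ is an order ideal. Since the $c_r$ are integers, each coefficient $c_r-c_{r-1}$ is a \emph{nonnegative integer}. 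Finally, split each order ideal $J_r$ into connected components, $J_r=\bigsqcup_i J_r^{(i)}$, using $e_{J_r}=\sum_i e_{J_r^{(i)}}$, so that $x$ becomes a nonnegative integer combination of the $e_J$ for connected nonempty order ideals $J$ — exactly the asserted generating set. This case requires essentially no new work beyond observing that the decomposition of Proposition~\ref{extreme-ray-characterization} has integer coefficients.

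\textbf{The main obstacle is the root-cone case}, specifically pinning down cleanly \emph{why} every simplicial subcone arising in a triangulation of $\Kroot_P$ is unimodular with respect to $\Lroot$, i.e.\ that a set of edge-vectors $\{e_i-e_j\}$ forming an acyclic subgraph of the Hasse diagram can be completed to a $\ZZ$-basis of the root lattice. I would handle this by the incidence/leaf induction: orient the forest, pick a leaf vertex $v$ incident to a single chosen edge $e$, note $e=\pm(e_v-e_w)$ is the unique generator involving $e_v$, and after a change of $\ZZ$-basis removing $e_v$ we reduce to a smaller forest on fewer vertices; the base case is a single edge, which is primitive in $\Lroot$. (Alternatively, one cites that the edge vectors of any graph are unimodular iff the graph is a forest, a classical fact.) Once unimodularity of the simplicial pieces is in hand, both halves of the proposition follow immediately, since a lattice point in a union of unimodular simplicial cones lies in one of them and is then a nonnegative integer combination of that cone's primitive ray generators — all of which are among the primitive ray generators of the whole cone $K$.
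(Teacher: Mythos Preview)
Your treatment of $\Kroot_P$ matches the paper's: both triangulate into simplicial subcones using only the given extreme rays and invoke unimodularity of any forest of root vectors $e_i-e_j$ (the paper phrases this as ``the root system of type $A_{n-1}$ is totally unimodular'' and leaves it at that; your leaf-induction sketch is the standard justification).

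For $\Kweight_P$, however, you take a genuinely different and more elementary route. The paper does \emph{not} use the explicit layer decomposition $x=\sum_r (c_r-c_{r-1})e_{J_r}$; instead it builds a unimodular triangulation of $\Kweight_P$ by induction on $|\LLL(P)|$, with forests as the base case, and in the inductive step uses the decomposition $\Kweight_P=\Kweight_{P_{i<j}}\cup\Kweight_{P_{j<i}}$ for an incomparable pair $i,j$ sharing a common lower bound $h$. The delicate point in the paper's argument is checking that the extreme rays of $\Kweight_{P_{i<j}}$ are among those of $\Kweight_P$, i.e.\ that a connected order ideal of $P_{i<j}$ remains connected in $P$; this is where the existence of $h$ is used to repair paths. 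Your argument sidesteps all of this: once one observes that the coefficients $c_r-c_{r-1}$ in the decomposition of Proposition~\ref{extreme-ray-characterization}(ii) are integers when $x$ is integral, the result is immediate. The paper's approach has the side benefit of actually producing a unimodular triangulation of $\Kweight_P$ (a footnote there warns that not every simplicial subcone on the given rays is unimodular, so an arbitrary triangulation will not work), which could be of independent use; but for the bare statement of the proposition your direct argument is both correct and shorter.
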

\begin{proof}
It suffices to produce a triangulation
of $K$ into unimodular cones, each of whose extreme rays
is a subset of these extreme rays of $K$.

For $\Kroot_P$, this essentially follows from the fact that the root
system of type $A_{n-1}$ is totally unimodular-- every
simplicial cone generated by a subset of roots $e_i - e_j$
is a unimodular cone.  Thus one can pick such a triangulation
of $\Kroot_P$ into simplicial subcones $K$ introducing no new extreme
rays arbitrarily, as in \cite[Lemma 4.6.1]{Stanley-EC1}.  

For $\Kweight_P$, one must be more careful
in producing a triangulation of $\Kweight_P$
into unimodular cones introducing no new extreme rays\footnote{
It is not clear, {\it a priori}, that every simplicial cone spanned by a subset
of the extreme rays of $\Kweight_P$ is unimodular,
e.g. consider the cone spanned by these three rays: 
$e_1+e_2, \,\, e_1+e_3, \,\, e_2+e_3$.}
.
Proceed as in the proof of 
Proposition~\ref{identification-proposition} via induction on the number
$|\LLL(P)|$ of linear extensions, but using as base cases the situation where $P$
is a forest, so that $\Kweight_P$ is
a unimodular cone by Corollary~\ref{simplicial-characterization}.

In this inductive step, assuming $P$ is not a forest,
there exist two elements $i, j$ which are incomparable in $P$
with a common lower bound $h <_P i,j$.
As in the proof of Proposition~\ref{identification-proposition}, one has
$$
\LLL(P) = \LLL(P_{i<j}) \sqcup \LLL(P_{j<i})
$$
and hence a decomposition
\begin{equation}
\label{Gessel-decomposition}
K_{\LLL(P)} = K_{\LLL(P_{i<j})} \cup K_{\LLL(P_{j<i})}.
\end{equation}
Note that induction applies to both $P_{i<j}$ and $P_{j>i}$
since they have fewer linear extensions.  
By the symmetry between $i$ and $j$, it only remains to
show that the extreme rays of $K_{\LLL(P_{i<j})}$ are a subset of
those for $K_{\LLL(P)}$, or equivalently, that any subset $J \subseteq [n]$
which induces a connected order ideal of $P_{i<j}$ will also induce
a connected order ideal of $P$.

First note that $J$ will also be an order ideal in $P$,
since $P$ has fewer order relations than $P_{i<j}$. Given any two elements
$a,b$ in $J$, there will be a path 
\begin{equation}
\label{path-in-ideal}
a=a_0,a_1,\ldots,a_m=b
\end{equation}
in $J$ where each pair $a_\ell,a_{\ell+1}$ are comparable in $P_{i<j}$.  If any pair
$a_\ell,a_{\ell+1}$ are incomparable in $P$, this means either $a_\ell \leq i$ 
and $j \leq a_{\ell+1}$, or the same holds swapping the indices $\ell, \ell+1$.
In either case, $j$ must also lie in the ideal $J$ of $P_{i<j}$, 
and hence $h$ and $i$ lie in $J$ too.  Thus one
can replace the single step $(a_\ell,a_{\ell+1})$ in the path \eqref{path-in-ideal}
with the longer sequence $(a_\ell,i,h,j,a_{\ell+1})$ of steps, or the same swapping the indices
$\ell, \ell+1$.
\end{proof}

\section{Analysis of the semigroup for $\Kroot_P$}
\label{sect:anlalysis_semigroup}

In the following subsections, we focus on the cone
$K=\Kroot_P$ with lattice $L=\Lroot$, and attempt
to analyze the structure of the affine semigroup $K \cap L$,
and its semigroup ring $R=k[K \cap L]$ over a field $k$.
Ultimately this leads to Corollary~\ref{planar-complete-intersection},
giving a complete intersection presentation
for $R$ when the poset $P$ is strongly planar, lifting
Greene's Theorem A from the introduction to a statement about
affine semigroup structure.

\subsection{Generating the toric ideal}

The affine semigroup $R=k[K \cap L]$ is naturally
a subalgebra of a Laurent polynomial algebra
$$
R = k[t_i t_j^{-1}]_{i <_P j} 
\quad \subset \quad 
k[t_1,t_1^{-1},\ldots,t_n,t_n^{-1}].
$$
On the other hand, recall from Proposition~\ref{Hilbert-bases-for-cones} that 
the affine semigroup $K \cap L$ is generated by the primitive
vectors $\{ e_i - e_j: i \lessdot_P j \}$ on its extreme rays.
Therefore one can present $R$ as a quotient via the surjection
$$
\begin{aligned}
S:=k[U_{ij}]_{i \lessdot_P j} & \longrightarrow R \\
U_{ij} &\longmapsto t_i t_j^{-1}.
\end{aligned}
$$
Defining as in Section~\ref{complete-intersection-section}
the toric ideal $I:=\ker(S \rightarrow R)$, one has $R \cong S/I$.

It therefore helps to know generators for $I$ in analyzing $R$,
and trying to compute its Hilbert series.  As in 
Proposition~\ref{toric-ideal-generators},
 $I$ is always generated by certain binomials.
However, there is a smaller generating set of binomials available
in this situation.  

Say that a set of edges $C$ in the (undirected) Hasse diagram for $P$
form a {\it circuit}\footnote{Sometimes these are called {\it simple cycles}.} if they can be directed to form a cycle, and they are
minimal with respect to inclusion having this property.  Having fixed a circuit $C$,
and having fixed one of the two ways to orient $C$ as a directed
cycle, say that an edge $\{i,j\}$ of $C$ having $i \lessdot_P j$ goes
{\it with}  $C$ if $\{i,j\}$ is directed toward $j$ in $C$, and goes {\it against} $C$
otherwise.   Define two monomials 
$$
\begin{aligned}
W(C)&:=\prod_{i \lessdot_P j \text{ with }C} U_{ij}\\
A(C)&:=\prod_{i \lessdot_P j \text{ against }C} U_{ij}
\end{aligned}
$$
and define the {\it circuit binomial}
$$
U(C):=W(C) - A(C).
$$

\begin{proposition}
\label{circuit-generators}
For any poset $P$ on $[n]$, the toric ideal $I=\ker(S \rightarrow R)$
where $S=k[U_{ij}]_{i \lessdot_{P} j}$ is generated by the {\it circuit} binomials
$\{U(C)\}$ as $C$ runs through all circuits of the undirected Hasse diagram of $P$.
\end{proposition}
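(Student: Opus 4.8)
The plan is to start from Proposition~\ref{toric-ideal-generators}: the ideal $I$ is generated by binomials $U^\alpha-U^\beta$ with $\alpha,\beta\in\NN^E$ ($E$ the edge set of the Hasse diagram) satisfying $\sum_{i\lessdot_P j}\alpha_{ij}(e_i-e_j)=\sum_{i\lessdot_P j}\beta_{ij}(e_i-e_j)$ in $\ZZ^n$. After cancelling common factors one may assume $\alpha$ and $\beta$ have disjoint support, so $\gamma:=\alpha-\beta$ is an integer vector on $E$ whose net divergence $\sum_{i\lessdot_P j}\gamma_{ij}(e_i-e_j)$ vanishes; equivalently $\gamma$ lies in the kernel of the incidence matrix of the Hasse diagram (with its natural orientation $i\to j$ for $i\lessdot_P j$), i.e.\ in the integral cycle lattice. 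Let $J$ be the ideal generated by the circuit binomials $U(C)$; I would prove $I\subseteq J$ by induction on $|\alpha|=\sum_{i\lessdot_P j}\alpha_{ij}$, the case $|\alpha|=0$ being trivial. (Reorienting a circuit only negates $U(C)$, so $J$ is well defined.)

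For the inductive step, the key point is that a nonzero integral cycle $\gamma$ always supports a ``flow-directed'' simple cycle: orient each edge $e$ in the support of $\gamma$ in the direction dictated by the sign of $\gamma_e$; vanishing divergence forces every vertex of the support to have both an incoming and an outgoing such edge, so following these edges yields a simple directed cycle, whose edge set is a circuit $C$. Orient $C$ to follow the flow. Since the Hasse diagram of a poset is acyclic, $C$ cannot be a directed cycle in the original orientation, so it has at least one edge ``with'' $C$ and at least one ``against.'' By construction an edge $e$ of $C$ goes with $C$ exactly when $\gamma_e>0$ (hence $\alpha_e>0$) and against $C$ exactly when $\gamma_e<0$ (hence $\beta_e>0$). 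Writing $z$ for the $\{0,1\}$-indicator of the ``with'' edges and $z'$ for that of the ``against'' edges, we get $W(C)=U^{z}$, $A(C)=U^{z'}$ with $0\neq z\leq\alpha$ and $0\neq z'\leq\beta$ componentwise.

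Now carry out the standard binomial reduction
$$
U^\alpha-U^\beta \;=\; U(C)\,U^{\alpha-z} \;+\; A(C)\bigl(U^{\alpha-z}-U^{\beta-z'}\bigr).
$$
The first summand is in $J$. For the second, set $\alpha':=\alpha-z$ and $\beta':=\beta-z'$; these are nonnegative, still have disjoint support (the entries removed from $\alpha$ lie on ``with'' edges, where $\beta$ already vanishes, and symmetrically), and $\alpha'-\beta'=\gamma-(z-z')$, which differs from $\gamma$ by a cycle vector and hence still lies in the cycle lattice. Thus $U^{\alpha'}-U^{\beta'}\in I$, and $|\alpha'|=|\alpha|-|z|<|\alpha|$ since $z\neq0$; by induction $U^{\alpha'}-U^{\beta'}\in J$, so the second summand lies in $J$ and $U^\alpha-U^\beta\in J$, as desired.

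The main obstacle is precisely the genuinely poset-theoretic step: extracting from an arbitrary nonzero lattice relation $\gamma$ a circuit on which the signs of $\gamma$ match exactly the ``with/against'' pattern, together with the observation that acyclicity of the Hasse diagram forces this circuit to contribute nontrivially to both $W(C)$ and $A(C)$, so that the induction parameter strictly decreases. The remaining manipulations are routine binomial bookkeeping.
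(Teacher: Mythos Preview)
Your proposal is correct and follows essentially the same approach as the paper: both reduce to disjoint-support binomials, interpret $\gamma=\alpha-\beta$ as an integral flow on the Hasse diagram, extract a flow-compatible circuit, and apply the identity $U^\alpha-U^\beta = U(C)\,U^{\alpha-z} + A(C)(U^{\alpha-z}-U^{\beta-z'})$ to reduce. The only cosmetic difference is that the paper decomposes the whole flow into circuits $C_1,\ldots,C_t$ at once and inducts on $t$, whereas you peel off one circuit at a time and induct on $|\alpha|$; you are also a bit more explicit about why acyclicity of the Hasse diagram forces $z\neq 0$.
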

\begin{proof}
Proposition~\ref{toric-ideal-generators} says $I$ is generated by
binomials of the form
\begin{equation}
\label{generic-binomial}
 \prod_{i \lessdot_P j} U_{ij}^{a_{ij}}
- \prod_{i \lessdot_P j} U_{ij}^{b_{ij}}
\end{equation}
where $a_{ij},b_{ij}$ are nonnegative integers such that
$$
\sum_{i \lessdot_P j} a_{ij} (e_i - e_j) =
\sum_{i \lessdot Pj} b_{ij} (e_j - e_i)
$$
or equivalently
$$
\sum_{i \lessdot_P j} a_{ij} (e_i - e_j) - b_{ij}(e_j-e_i) = 0.
$$
In looking for a smaller set of generators for $I$, note
that one may assume that if $a_{ij} \neq 0$ then $b_{ij} = 0$,
else one could cancel factors of $U_{ij}$ from the binomial
in \eqref{generic-binomial}.  This means that the nonnegative
integers $a_{ij},b_{i,j}$ can be thought of as the multiplicities on a
collection $\CCC$ of directed arcs that either go up or down along edges in $P$, with the
$\CCC$-indegree equalling the $\CCC$-outdegree at every vertex.  Thus $\CCC$ can 
be decomposed into collections supported on various circuits $C_1,\ldots,C_t$ of edges
(allowing multiplicity among the $C_i$).  One then finds that
the binomial \eqref{generic-binomial} lies in the ideal generated by the
circuit binomials $\{U(C_i)\}_{i=1}^t$ using the following calculation
and induction on $t$:
$$
\begin{aligned}
 \prod_{i \lessdot_P j} U_{ij}^{a_{ij}}
- \prod_{i \lessdot_P j} U_{ij}^{b_{ij}} 
& = \prod_{i=1}^t W(C_i) - \prod_{i=1}^t A(C_i) \\
& = \underbrace{\left( W(C_1)-A(C_1) \right)}_{U(C_1)} \prod_{i = 2}^t W(C_i) \\
&\qquad  +A(C_1) \left( \prod_{i=2}^t W(C_i) - \prod_{i=2}^t A(C_i) \right). \qedhere
\end{aligned}
$$
\end{proof}

For example, using this (together with 
Proposition~\ref{prop:complete-intersection-consequences})
allows one to immediately compute 
$\HHH(\Kroot_P;\XX)$ and $\Psi_P(\xx)=\sss(\Kroot_P;\xx)$
in the case where the Hasse diagram of $P$ has only one circuit, as done for 
$\Psi_P$ by other means in \cite{Boussicault} and \cite{BoussicaultFeray}.

\begin{corollary}
\label{unicyclic-corollary}
Let $P$ be a poset whose Hasse diagram has only one circuit $C$. 
Considering the elements on $C$ as a subposet, let $\max(C)$ and $\min(C)$
denote its maximal and minimal elements.

Then the complete intersection presentation 
$
R=k[K \cap L] \cong S/( U(C) )
$ 
implies
$$
\begin{aligned}
\HHH(K_P;\XX) 
&=\left( 1-\prod_{i \in \min(C)}X_i \cdot \prod_{j \in \max(C)}X^{-1}_j \right)
        \frac{1}{\prod_{i \lessdot_P j} (1 - X_i X_j^{-1})} \\
\Psi_P(\xx) &= 
\left( \sum_{i \in \min(C)} x_i - \sum_{j \in \max(C)} x_{j} \right)
\frac{1}{\prod_{i \lessdot_P j} (x_i - x_j)}, 
\end{aligned}
$$
assuming $P$ is connected for the latter formula. $\qed$
\end{corollary}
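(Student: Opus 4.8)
The corollary is the special case of Proposition~\ref{prop:complete-intersection-consequences} where the affine semigroup ring $R = k[\Kroot_P \cap \Lroot]$ has a \emph{principal} toric ideal, so the whole job is to verify that $I = (U(C))$ and then to read off the two formulas by substituting into that proposition. So I would organize the proof in three short movements.

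\textbf{Step 1: the toric ideal is principal.} By Proposition~\ref{circuit-generators}, $I$ is generated by the circuit binomials $U(C')$ as $C'$ ranges over all circuits of the undirected Hasse diagram of $P$. By hypothesis there is only one such circuit $C$, hence $I = (U(C))$. This is the crux of the reduction, and it is immediate once Proposition~\ref{circuit-generators} is in hand. One should note here $m - d = |\{i \lessdot_P j\}| - (n-1) = 1$: the Hasse diagram is connected (we assume $P$ connected), has $n$ vertices and $n-1+1 = n$ edges — precisely one more edge than a spanning tree — so the expected number of complete-intersection relations is indeed $1$, consistent with $I$ being principal. (When $P$ is disconnected the $\HHH$-formula still makes sense component-by-component; the $\Psi_P$ formula requires connectedness since otherwise $\Psi_P = 0$ by Corollary~\ref{identify-denominators}.)

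\textbf{Step 2: identify the degree of the generator.} The multidegree $\delta_1 := \deg(U(C))$ is computed from $U(C) = W(C) - A(C)$, where both monomials have the same $\Lroot$-degree (that is why $U(C) \in I$). Walking around the oriented circuit $C$, the edges going \emph{with} $C$ contribute $\sum (e_i - e_j)$ over those edges and the edges going \emph{against} contribute the negatives; summing the ``with'' part, the internal vertices of the cycle telescope away and one is left with $\delta_1 = e_{\min(C)} - e_{\max(C)}$, where $\min(C)$ and $\max(C)$ are the (here unique, since they are the source and sink of the directed cycle) minimal and maximal elements of the subposet on the vertices of $C$. Actually the corollary writes $\sum_{i \in \min(C)}$ and $\sum_{j \in \max(C)}$, allowing for the possibility that the cycle has several local minima/maxima as a subposet; the telescoping argument gives $\delta_1 = \sum_{i \in \min(C)} e_i - \sum_{j \in \max(C)} e_j$ in general, which is the content of this step and the only place a small picture/induction on cycle length is needed.

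\textbf{Step 3: substitute.} Now apply Proposition~\ref{prop:complete-intersection-consequences} with $m = |\{i \lessdot_P j\}|$, $d = n-1 = \dim_\RR \Kroot_P$, the single relation degree $\delta_1 = \sum_{i \in \min(C)} e_i - \sum_{j \in \max(C)} e_j$, and generator degrees $u_{ij} = e_i - e_j$. The formula for $\HHH$ gives
\[
\HHH(\Kroot_P;\XX) = \frac{1 - \XX^{\delta_1}}{\prod_{i \lessdot_P j}(1 - \XX^{e_i - e_j})}
= \Bigl(1 - \prod_{i \in \min(C)} X_i \cdot \prod_{j \in \max(C)} X_j^{-1}\Bigr)\frac{1}{\prod_{i \lessdot_P j}(1 - X_i X_j^{-1})},
\]
and since here $\dim_\RR \Kroot_P = n - 1$ rather than $n$, one cannot quote the $d = n$ clause of Proposition~\ref{prop:complete-intersection-consequences} verbatim; instead one applies the total-residue passage of Section~\ref{total-residue-section} inside the hyperplane $x_1 + \cdots + x_n = 0$ (equivalently, observe that $\Psi_P = \sss(\Kroot_P;\xx)$ by Proposition~\ref{identification-proposition} and that the complete-intersection calculation localizes to this subspace), taking the lowest-degree term of the Laurent expansion: $1 - \XX^{\delta_1} \rightsquigarrow -\langle \xx, \delta_1\rangle = \sum_{j \in \max(C)} x_j - \sum_{i \in \min(C)} x_i$ up to the sign absorbed by the $(-1)^{n-1}$, and $1 - \XX^{e_i - e_j} \rightsquigarrow -(x_i - x_j)$. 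The signs match and one obtains
\[
\Psi_P(\xx) = \Bigl(\sum_{i \in \min(C)} x_i - \sum_{j \in \max(C)} x_j\Bigr)\frac{1}{\prod_{i \lessdot_P j}(x_i - x_j)}.
\]

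\textbf{Main obstacle.} Nothing here is deep; the only genuine content beyond bookkeeping is Step~2, the telescoping identification of $\deg U(C)$ — and even that is routine once one orients the circuit. The one subtlety worth flagging is the $d = n-1$ versus $d = n$ mismatch in Step~3: Proposition~\ref{prop:complete-intersection-consequences} states the $\sss$-formula only for full-dimensional cones, so the $\Psi_P$ formula is not a literal corollary of that proposition and must be obtained either by working inside the ambient hyperplane $\Vroot$ throughout (replacing $n$ by $n-1$ everywhere, which is legitimate since all the relevant cones and lattices live in $\Vroot$) or by re-deriving it from $\Psi_P = \sss(\Kroot_P;\xx)$ together with the explicit $\HHH$ formula and the total-residue operator. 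I would handle this by the first route: the entire complete-intersection machinery of Sections~\ref{complete-intersection-section}--\ref{total-residue-section} applies verbatim with $V = \Vroot$, $L = \Lroot$, and dimension $n - 1$.
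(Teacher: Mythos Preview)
Your proposal is correct and follows exactly the approach the paper intends: the corollary is stated with a bare $\qed$, the preceding sentence simply says that Proposition~\ref{circuit-generators} ``together with Proposition~\ref{prop:complete-intersection-consequences}'' immediately gives the result. You have faithfully unpacked those two ingredients, including the telescoping computation of $\deg U(C) = \sum_{i\in\min(C)} e_i - \sum_{j\in\max(C)} e_j$ that the paper leaves implicit, and you correctly flag (and resolve) the $d=n-1$ versus $d=n$ issue by working inside $\Vroot$, which is how the paper has set things up from Section~\ref{identification-section} onward.
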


\subsection{The biconnected component reduction}

Since the ideal $I=\ker(S \rightarrow R)$
is generated by the circuits within the undirected Hasse diagram for $P$,
decomposing the Hasse diagram into
its biconnected components provides a reduction 
in understanding the structure of $R$, which we explain next.

First we recall the notion of biconnected components in
an undirected graph $G=(V,E)$.
Say that two edges are {\it circuit-equivalent} if
there is a circuit $C$ of edges that passes through both. Consider
the equivalence classes $E_i$ of the transitive closure 
of this relation\footnote{Actually, this 
relation is already transitive, although we will not need this here.}.
If $V_i$ is the set of vertices which are at least the 
extremity of one edge in $E_i$ let the {\it biconnected components}
of $G$ be the subgraphs $G_i=(V_i,E_i)$ 

\begin{corollary}
\label{biconnected-component-corollary}
If the Hasse diagram for $P$ has biconnected components $P_1,\ldots,P_t$
(regarding each as the Hasse diagram for a poset $P_\ell$), then
one can express the semigroup ring $R_P$ for $P$ as a tensor 
product of graded $k$-algebras:
$$
R_P  \cong R_{P_1} \otimes_k \cdots \otimes_k R_{P_t}
$$
and therefore
$$
\begin{aligned}
\HHH(\Kroot_P;\XX) &= \prod_{\ell=1}^t \HHH(K_{P_\ell};\XX) ;\\
\Psi_P(\xx) &= \prod_{\ell=1}^t \Psi_{P_\ell}(
\xx).\\
\end{aligned}
$$
\end{corollary}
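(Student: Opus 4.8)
The plan is to reduce everything to the toric-ideal presentation and the circuit-generator description from Proposition~\ref{circuit-generators}. The key observation is that the polynomial ring $S = k[U_{ij}]_{i\lessdot_P j}$ factors as a tensor product $S \cong S_1 \otimes_k \cdots \otimes_k S_t$, where $S_\ell = k[U_{ij}]_{\{i,j\}\in E_\ell}$ uses only the edge variables belonging to the biconnected component $P_\ell$; this is immediate since the edge set of the Hasse diagram for $P$ is partitioned into the $E_\ell$. Under this factorization, I would argue that the toric ideal $I$ decomposes compatibly: $I = I_1 S + \cdots + I_t S$, where $I_\ell$ is the toric ideal of $S_\ell$ (equivalently, under the tensor identification, $I$ is generated by $\bigsqcup_\ell I_\ell$). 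The crucial input is that every circuit $C$ of the Hasse diagram of $P$ lies entirely within a single biconnected component $P_\ell$ --- this is essentially the definition of the circuit-equivalence relation used to define the components --- so each circuit binomial $U(C)$ lies in some single $S_\ell$, hence in $I_\ell$. Combined with Proposition~\ref{circuit-generators}, which says the $U(C)$ generate $I$, this gives $I = \sum_\ell I_\ell S$.

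Once the ideal decomposition is in place, the ring isomorphism follows from the standard fact that for ideals $J_\ell \subseteq A_\ell$ one has
$$
(A_1 \otimes_k \cdots \otimes_k A_t)\big/\big(J_1 A + \cdots + J_t A\big) \;\cong\; (A_1/J_1) \otimes_k \cdots \otimes_k (A_t/J_t),
$$
applied with $A_\ell = S_\ell$ and $J_\ell = I_\ell$. Since $R_P = S/I$ and $R_{P_\ell} = S_\ell/I_\ell$, this yields $R_P \cong R_{P_1} \otimes_k \cdots \otimes_k R_{P_t}$. I should check that this isomorphism respects the $L$-gradings (equivalently, is compatible with the surjections onto the Laurent subalgebras $k[t_it_j^{-1}]$), which it is, because each generator $U_{ij}$ retains its degree $e_i - e_j$ under the tensor identification.

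For the Hilbert series and rational function consequences, the multigraded Hilbert series is multiplicative under tensor product of graded $k$-algebras: $\Hilb(R_P;\XX) = \prod_\ell \Hilb(R_{P_\ell};\XX)$, which is exactly $\HHH(\Kroot_P;\XX) = \prod_\ell \HHH(K_{P_\ell};\XX)$ by Proposition~\ref{integer-points-valuation-prop}(ii). The formula for $\Psi_P(\xx)$ then follows by applying the total residue operator of Section~\ref{total-residue-section}; alternatively, and more directly, one can observe that $\dim_\RR \Kroot_P = \sum_\ell \dim_\RR K_{P_\ell}$ when $P$ is connected (the Hasse diagram restricted to each component is connected on its vertex set, and the dimension of $\Kroot_{P_\ell}$ is the number of edges in a spanning tree of $P_\ell$, these summing correctly), so the lowest-degree term of the product of Hilbert series is the product of the lowest-degree terms, giving $\Psi_P(\xx) = \prod_\ell \Psi_{P_\ell}(\xx)$ after the sign $(-1)^n$ distributes correctly across the factors.

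The main obstacle is the ideal-decomposition step, specifically verifying cleanly that the toric ideal $I$ is generated by the \emph{union} of the $I_\ell$ once we know it is generated by all circuit binomials: one must confirm not just that each $U(C)$ lies in some $S_\ell$, but that the $I_\ell$ so produced are exactly the toric ideals of the component semigroups and not something smaller --- i.e., that circuits of $P_\ell$ as a standalone poset coincide with circuits of $P$ supported in $E_\ell$. This is true because a circuit is a purely graph-theoretic notion depending only on the undirected edge set, and the biconnected component $P_\ell$ has precisely the edge set $E_\ell$; but one should also note a subtlety about the \emph{orientation} of circuits (the ``with''/``against'' decomposition) being inherited from the poset structure of $P$ restricted to the vertices of $P_\ell$, which is consistent since $P_\ell$ was defined to carry exactly that induced covering relation. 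Everything else is routine bookkeeping with gradings and the multiplicativity of Hilbert series.
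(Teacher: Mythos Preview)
Your proposal is correct and follows essentially the same route as the paper: both arguments use Proposition~\ref{circuit-generators} together with the fact that every circuit lies in a single biconnected component to decompose $I$ as the sum of the component toric ideals, and then deduce the tensor factorization $R_P \cong \bigotimes_\ell R_{P_\ell}$ from the corresponding factorization of $S$. The paper's proof is terser---it simply asserts that the remaining assertions follow from the ring isomorphism---whereas you spell out the grading compatibility and the multiplicativity of Hilbert series; but the underlying strategy is identical.
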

\begin{proof}
Express $R_P$ as $S/I$. Since every edge of the Hasse diagram lies in
a unique biconnected component $P_i$ ($1 \leq i \leq t$), one
has $S \cong \otimes_{\ell=1}^t S_{P_\ell}$
with $S_{P_\ell}:=k[U_{ij}]_{i \lessdot_{P_\ell} j}$.
Since each circuit $C$ is supported on a set of edges that lies within 
a single biconnected component $P_\ell$, Proposition~\ref{circuit-generators}
implies $I=\bigoplus_{\ell=1}^t I_{P_\ell}$ where $I_{P_\ell}$ is the
toric ideal $\ker(S_{P_{\ell}} \rightarrow R_{P_{\ell}})$.
The first assertion follows, and the remaining assertions follow from the first.
\end{proof}

\begin{remark}
    The argument above works in a more general context.
    Namely, if the ambient vector space $V$, lattice $L$, and cone $K$ have
    compatible direct sum decompositions
    \begin{align*}
        V &= V_1 \oplus \dots \oplus V_\ell,\\
        L &= L_1 \oplus \dots \oplus L_\ell,\\
        K &= K_1 \oplus \dots \oplus K_\ell,
    \end{align*}
    then the semigroup ring $R := k[K \cap L]$ has a tensor product decomposition
    \[R  \cong  R_1 \otimes_k \dots \otimes_k R_\ell, \]
    where $R_i = k[K_i \cap L_i]$ for $i=1,\cdots,\ell$.
\end{remark}

\subsection{Notches and disconnecting chains}
\label{notch-section}

Note that Corollary~\ref{biconnected-component-corollary}
provides a somewhat trivial sufficient condition for $\Psi_P(\xx)$ to factor.
Our goal here is a less trivial such condition on $P$,
including a ring-theoretic explanation of the factorization 
due to disconnecting chains from \cite[Theorem 7.1]{BoussicaultFeray}.
This is provided by the following operation which sometimes applies
to the Hasse diagram for $P$.

\begin{definition}
In a finite poset $P$, say that a triple of elements $(a,b,c)$ forms a
{\it notch of $\vee$-shape} (dually, a {\it notch of $\wedge$-shape}) if 
$a \lessdot_P b,c$ (dually, $a \gtrdot_P b,c$), and 
in addition, $b, c$ lie in different connected components of the 
poset $P \setminus P_{\leq a}$
(dually, $P \setminus P_{\geq a}$).

When $(a,b,c)$ forms a notch of either shape in a poset $P$, say that
the quotient poset $\bar{P}:=P/\{b \equiv c\}$, having one fewer element
and one fewer Hasse diagam edge, 
is obtained from $P$ by {\it closing the notch}, and that $P$ is
obtained from $\bar{P}$ by {\it opening a notch}.

It should be noted that when  $(a,b,c)$ forms a $\vee$-shaped notch,
the two elements $b,c$ have no common upper bounds in $P$.  This eliminates
several pathologies which could occur in the formation of the quotient
poset $\bar{P}=P/\{b \equiv c\}$;  e.g., double edges other than the 
edge $\{a,b\},\{a,c\}$, oriented cycles, creation of a new edge in the quotient
that is the transitive closure of other edges.
\end{definition}

\noindent
For example, in Figure~\ref{notch-figure}, the poset $P_2$ contains
a notch of $\vee$-shape $(3,5,5')$, and the poset $P_1$ is obtained
from $P_2$ by closing this notch.

We state the following result relating $\Kroot_{\bar{P}}, \Kroot_P$
in the case when the notch is $\vee$-shaped; the result for
a $\wedge$-shaped notch is analogous.

\begin{theorem}
\label{notch-theorem}
When $\bar{P}$ is obtained from $P$ closing a $\vee$-shaped notch $(a,b,c)$,
the affine semigroup ring $R_{\bar{P}}$ is obtained from the ring $R_P$ by 
modding out the nonzero divisor $t_a t_b^{-1} - t_a t_c^{-1}$:
\begin{equation}
\label{notch-relation}
R_{\bar{P}} \cong R_P / (t_a t_b^{-1} - t_a t_c^{-1}).
\end{equation}

In particular, 
$$
\begin{aligned}
\HHH(\Kroot_{\bar{P}};\XX) 
&= (1-X_a X_b^{-1}) \left[ \HHH(\Kroot_P;\XX) \right]_{X_b=X_c} \\
\Psi_{\bar{P}}(\xx) 
&= (x_a - x_b) \left[ \Psi_P(\xx) \right]_{x_b=x_c}
\end{aligned}
$$
so that $\Psi_{\bar{P}}(\xx)$ and $\left[ \Psi_P(\xx) \right]_{x_b=x_c}$
have exactly the {\it same} numerator polynomials when written
over the denominator $\prod_{i \lessdot_{\bar{P}} j}(x_i-x_j)$, 
and a complete intersection presentation for $R_P$ leads
to such a presentation for $R_{\bar{P}}$.
\end{theorem}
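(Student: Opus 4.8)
The plan is to read equation~(\ref{notch-relation}) as the identification of the kernel of a natural surjection, and then to deduce the Hilbert series, total residue, numerator, and complete intersection statements formally from it.

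First I would construct the surjection. The quotient $P \to \bar P$ identifies $b$ with $c$, hence induces a linear map $\phi\colon \RR^n \to \RR^{n-1}$ of the ambient spaces (add the $c$-coordinate to the $b$-coordinate, then forget the $c$-coordinate), with $\ker\phi = \RR(e_b-e_c)$. Since every covering relation of $P$ maps to a covering relation of $\bar P$ or to the merged edge, one checks $\phi(\Kroot_P)\subseteq\Kroot_{\bar P}$ and that $\phi$ carries the primitive ray generators of $\Kroot_P$ onto those of $\Kroot_{\bar P}$; by Proposition~\ref{Hilbert-bases-for-cones} this yields a surjection of affine semigroups $\Kroot_P\cap\Lroot \twoheadrightarrow \Kroot_{\bar P}\cap\phi(\Lroot)$, hence a graded surjection of semigroup rings $\pi\colon R_P \twoheadrightarrow R_{\bar P}$. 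Now $t_at_b^{-1}$ and $t_at_c^{-1}$ both lie in $R_P$ (as $a\lessdot_P b$ and $a\lessdot_P c$), they are distinct, and $\pi$ sends both to the single generator of $R_{\bar P}$ attached to the merged edge; so $f:=t_at_b^{-1}-t_at_c^{-1}$ is a \emph{nonzero} element of $\ker\pi$. Because $R_P$ is a subring of the Laurent ring $k[t_1^{\pm 1},\ldots,t_n^{\pm 1}]$, it is a domain, so $f$ is automatically a nonzerodivisor. Thus the entire content of (\ref{notch-relation}) reduces to the single equality $\ker\pi=(f)$.

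For this I would use the $\phi(\Lroot)$-grading on $R_P$ (for which $f$ is homogeneous of degree $\phi(e_a-e_b)$, which is exactly why this grading is the right one). Since $\Kroot_P$ is pointed and $\pm(e_b-e_c)\notin\Kroot_P$ — e.g.\ because the order ideals $P_{\le b}$ and $P_{\le c}$ certify this — each fiber of $\phi$ meets $\Kroot_P$ in a bounded segment, so every graded piece of $R_P$ is finite dimensional. As $f$ is a nonzerodivisor, multiplication by $f$ identifies $R_P$ with $(f)$ up to a degree shift, so writing $m_{\bar w}$ for the number of lattice points of $\Kroot_P$ on $\phi^{-1}(\bar w)$ we get $\dim_k (R_P/(f))_{\bar w} = m_{\bar w}-m_{\bar w-\phi(e_a-e_b)}$, while $\dim_k (R_{\bar P})_{\bar w}$ is $1$ or $0$ according to whether $\bar w\in\Kroot_{\bar P}\cap\phi(\Lroot)$. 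Since the surjection $R_P/(f)\twoheadrightarrow R_{\bar P}$ is degree preserving between spaces with finite dimensional graded pieces, it is an isomorphism once one verifies
$$
m_{\bar w}-m_{\bar w-\phi(e_a-e_b)} \;=\; \begin{cases} 1 & \text{if } \bar w\in\Kroot_{\bar P}\cap\phi(\Lroot),\\[2pt] 0 & \text{otherwise,}\end{cases}
$$
i.e.\ that translating $\Kroot_P$ by its extreme ray generator $e_a-e_b$ removes exactly one lattice point from each line in direction $e_b-e_c$ that it meets. This is the combinatorial heart of the theorem, and I expect it to be the main obstacle. The plan for it is to use the description $\Kroot_P\cap\Lroot=\{v:\sum_{j\in J}v_j\ge 0 \text{ for every order ideal }J\text{ of }P\}$ coming from the near-duality of Remark~\ref{duality-relation-remark} together with Proposition~\ref{extreme-ray-characterization}(ii): fixing $\bar w$ fixes all coordinates except that $v_b$ may vary (with $v_c$ moving oppositely), and since $a\lessdot_P b,c$ every order ideal $J$ meets $\{a,b,c\}$ in one of $\varnothing,\{a\},\{a,b\},\{a,c\},\{a,b,c\}$. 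One reads off that the constraints on $v_b$ are precisely lower bounds from ideals of $\{a,b\}$-type and upper bounds from ideals of $\{a,c\}$-type; the translation by $e_a-e_b$ raises every $\{a,b\}$-type lower bound by exactly $1$, leaves the $\{a,c\}$-type and $\{a,b,c\}$-type inequalities unchanged, and makes each $\{a\}$-type inequality strictly harder by $1$, and the resulting bookkeeping gives the displayed identity. The $\vee$-notch hypothesis enters here to guarantee that $\bar P$ is genuinely a poset whose Hasse diagram has exactly one fewer edge, so that $\phi$ behaves as described and no second binomial relation intrudes. An alternative route to $\ker\pi=(f)$, more in the spirit of Section~\ref{sect:anlalysis_semigroup}, is to write $S_{\bar P}\cong S_P/(U_{ab}-U_{ac})$ and use Proposition~\ref{circuit-generators}, checking that each circuit binomial of $\bar P$ lies in $I_P+(U_{ab}-U_{ac})$ and conversely, by matching circuits of $\bar P$ with circuits (or, when a lifted circuit of $\bar P$ passes through $a$, pairs of circuits) of $P$ — again the $\vee$-notch hypothesis is what makes this matching clean.

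Finally, once (\ref{notch-relation}) is in hand the rest is formal. The identity $\HHH(\Kroot_{\bar P};\XX)=(1-X_aX_b^{-1})[\HHH(\Kroot_P;\XX)]_{X_b=X_c}$ is the standard computation of the Hilbert series of a graded ring modulo a homogeneous nonzerodivisor, exactly as in Section~\ref{complete-intersection-section}; applying the total residue operator of Section~\ref{total-residue-section} (or Proposition~\ref{prop:complete-intersection-consequences}) converts it to $\Psi_{\bar P}(\xx)=(x_a-x_b)[\Psi_P(\xx)]_{x_b=x_c}$. Comparing smallest denominators through Corollary~\ref{identify-denominators}(i) — the edge $\{a,b\}$ (equivalently $\{a,c\}$) of $P$ disappears and every other edge survives in $\bar P$ — then gives the statement that $\Psi_{\bar P}(\xx)$ and $[\Psi_P(\xx)]_{x_b=x_c}$ have the same numerator over $\prod_{i\lessdot_{\bar P}j}(x_i-x_j)$. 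And if $R_P=S/(f_1,\ldots,f_{m-d})$ is a complete intersection presentation, then by (\ref{notch-relation}) and the nonzerodivisor property $R_{\bar P}\cong R_P/(f)=S/(f_1,\ldots,f_{m-d},U_{ab}-U_{ac})$, which upon eliminating the variable $U_{ac}$ presents $R_{\bar P}$ as a complete intersection in one fewer variable and one fewer relation, as claimed.
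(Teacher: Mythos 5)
Your overall architecture is right and matches the paper's in outline: build the surjection $\pi\colon R_P\twoheadrightarrow R_{\bar P}$, note $f=t_at_b^{-1}-t_at_c^{-1}$ is a nonzero homogeneous element of $\ker\pi$ and a nonzerodivisor because $R_P$ is a domain, reduce everything to $\ker\pi=(f)$, and then derive the Hilbert series, total residue, numerator, and complete intersection statements formally (that last part of your write-up is fine and is essentially what the paper does). The gap is exactly at the step you yourself flag as ``the combinatorial heart'': the claim that ``the resulting bookkeeping gives the displayed identity'' is not true as stated. Your bookkeeping on the fiber $\phi^{-1}(\bar w)$ correctly reduces the identity $m_{\bar w}-m_{\bar w-\phi(e_a-e_b)}=[\bar w\in\Kroot_{\bar P}]$ to the following nontrivial assertion: whenever all constraints from order ideals containing both or neither of $b,c$ hold and the feasible interval for $v_b$ has length at least $1$, the constraints that get tightened by the shift (those from ideals containing $a$ but neither $b$ nor $c$) remain satisfiable. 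That assertion is precisely where the hypothesis that $b$ and $c$ lie in \emph{different connected components} of $P\setminus P_{\leq a}$ must be used, and your sketch never uses it there; you instead locate the role of the notch hypothesis in guaranteeing that ``$\bar P$ is genuinely a poset whose Hasse diagram has exactly one fewer edge.'' That condition is not sufficient. Take $P$ the hexagon $a\lessdot b,c$, $b\lessdot d$, $c\lessdot e$, $d,e\lessdot g$: here $(a,b,c)$ is \emph{not} a notch ($P\setminus\{a\}$ is connected), yet $\bar P$ is a genuine poset with exactly one fewer Hasse edge. For this $P$ one has $R_P/(f)\cong k[U_{ab},U_{bd},U_{ce},U_{dg},U_{eg}]/\bigl(U_{ab}(U_{bd}U_{dg}-U_{ce}U_{eg})\bigr)$, which is not a domain, while $R_{\bar P}$ is a domain; so $\ker\pi\neq(f)$ and your lattice-point identity must fail for some $\bar w$. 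Hence the ``bookkeeping'' alone cannot close the argument, and the main route as written has a genuine hole.

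Your alternative route is in fact the paper's proof, but it needs two ingredients you only gesture at. First, to reduce $\ker(S_P\to R_{\bar P})\subseteq I_P+(U_{ab}-U_{ac})$ to circuits at all, the paper reinterprets that kernel as the toric ideal of the directed graph $\bar P^{+}$ obtained from the Hasse diagram of $\bar P$ by \emph{doubling} the merged edge (so that Proposition~\ref{circuit-generators} applies and the digon gives $U_{ab}-U_{ac}$). Second, for a circuit $C$ of $\bar P^{+}$ entering the collapsed vertex on the $b$-side and leaving on the $c$-side, the disconnectedness hypothesis forces $C$ to pass through some $d\leq_P a$; choosing a saturated chain from $d$ to $a$ lets one split $C$ into two genuine circuits $C_b,C_c$ of $P$ and write the explicit three-term syzygy \eqref{notch-circuit-relation} expressing $U(C)$ in terms of $U(C_b)$, $U(C_c)$, and $U_{ab}-U_{ac}$. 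That existence of $d$ is the precise point where the notch hypothesis does its work (and is what fails in the hexagon), so any completed proof must make it explicit rather than attributing the hypothesis to the shape of $\bar P$.
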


\begin{example}
\label{disconnecting-chain-example}
Before delving into the proof, we illustrate how 
Theorem~\ref{notch-theorem}, together with some of the 
foregoing results, helps to analyze the ring $R_P,$ as well as the
Hilbert series $\HHH(\Kroot_P;\XX)$, and hence
$\Psi_P(\xx)$.

Consider the posets shown in Figure~\ref{notch-figure}.
As mentioned earlier, $P_1$ is obtained from $P_2$ by closing the
$\vee$-shaped notch $3<5,5'$.  In addition, $P_2$ is obtained
from $P_3$ by closing the $\vee$-shaped notch $1<3,3'$.
Lastly, note that $P_4, P_5$ are the two biconnected components of $P_3$.

\begin{figure}[ht]
$$
\includegraphics[width=250pt]{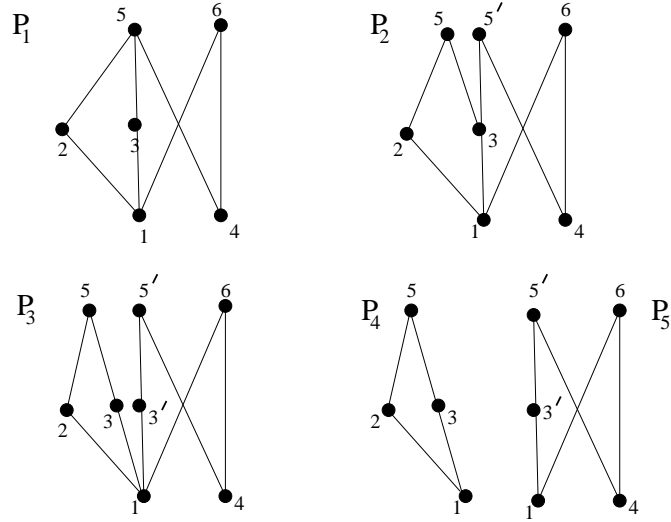}
$$
\caption{Examples of opening and closing notches.}
\label{notch-figure}
\end{figure}

In analyzing $R_{P_1}$, therefore, one can start with $P_4, P_5$,
which each have a unique circuit, and apply 
Corollary~\ref{unicyclic-corollary}
to write down these simple (complete intersection) presentations:
$$
\begin{aligned}
R_{P_4} 
 &\cong k[U_{12},U_{25},U_{13},U_{35}]/(U_{12}U_{25}-U_{13}U_{35}) \\
R_{P_5} 
 &\cong k[U_{13'},U_{16},U_{3'5'},U_{45'},U_{46}]/
              (U_{13'}U_{3'5'}U_{46}-U_{16}U_{45'})
\end{aligned}
$$
Applying Corollary~\ref{biconnected-component-corollary}
yields the following tensor product (complete intersection) presentation
for $R_{P_3}$:
$$
\begin{aligned}
R_{P_3} &\cong R_{P_4} \otimes R_{P_5} \\
         &\cong k[U_{12},U_{25},U_{13},U_{35},U_{13'},U_{16},U_{3'5'},U_{45'},U_{46}] \\
& \qquad \qquad /
           (U_{12}U_{25}-U_{13}U_{35}, \quad 
               U_{13'}U_{3'5'}U_{46}-U_{16}U_{45'}).
\end{aligned}
$$
Applying Theorem~\ref{notch-theorem} to close the notch at
$1<3,3'$ yields the following complete intersection presentation for $R_{P_2}$:
$$
\begin{aligned}
R_{P_2}
   &\cong k[U_{12},U_{25},U_{13},U_{35},U_{16},U_{35'},U_{45'},U_{46}] \\
& \qquad \qquad /
           (U_{12}U_{25}-U_{13}U_{35}, \quad U_{13}U_{35'}U_{46}-U_{16}U_{45'}).
\end{aligned}
$$
Applying Theorem~\ref{notch-theorem} once more to close the notch at
$3<5,5'$ yields the following complete intersection presentation for $R_{P_1}$:
$$
\begin{aligned}
R_{P_1}
   &\cong k[U_{12},U_{25},U_{13},U_{35},U_{16},U_{45},U_{46}] \\
& \qquad \qquad /
           (U_{12}U_{25}-U_{13}U_{35}, \quad U_{13}U_{35}U_{46}-U_{16}U_{45}).
\end{aligned}
$$
Consequently, from Theorem~\ref{prop:complete-intersection-consequences},
one has 
$$
\begin{aligned}
\HHH(\Kroot_{P_1};\XX)
 &=\frac{(1-X_1X_5^{-1})(1-X_1X_4X_5^{-1}X_6^{-1})}
           {\prod_{i \lessdot_{P_1} j}(1-X_iX_j^{-1})} \\
\Psi_{P_1}(\xx)
 &=\frac{(x_1-x_5)(x_1+x_4-x_5-x_6)}
           {\prod_{i \lessdot_{P_1} j}(x_i-x_j^{-1})}.
\end{aligned}
$$

\end{example}

\begin{proof}[Proof of Theorem~\ref{notch-theorem}]
Define $S_P:=k[U_{ij}]_{i \lessdot_P j}$, so that
$$
R_P :=k[\Kroot_P \cap \Lroot] \cong S_P/I_P
$$
where $I_P$ is the kernel of the
map $S_P \rightarrow R_P$ sending $U_{ij}$ to $t_i t_j^{-1}$.

Define a map $S_P \overset{\phi}{\rightarrow} R_{\bar{P}}$ sending
most variables $U_{ij}$ to $t_i t_j^{-1}$, except that
both $U_{ab},U_{ac}$ get sent to $t_a t_b^{-1}$.
We wish to describe the ideal $J:=\ker(S_P \rightarrow R_{\bar{P}})$,
and in particular to show that
\begin{equation}
\label{identify-kernel}
J=I_P+(U_{ab}-U_{ac}).
\end{equation}
This would imply \eqref{notch-relation}:
the map $\phi$ is surjective since it hits
a set of generators for $R_{\bar{P}}$, and hence
$$
\begin{aligned}
R_{\bar{P}} &\cong S_P/J \\
            &= S_P/(I_P + (U_{ab}-U_{ac}))\\
            & \cong \left( S_P/I_P \right) / (\bar{U}_{ab}-\bar{U}_{bc}) \\
            & \cong R_P / (t_a t_b^{-1} - t_a t_c^{-1}).
\end{aligned}
$$

To prove the equality of ideals asserted in \eqref{identify-kernel}, 
one checks that the two ideals are included in each other.  
The inclusion $I_P+(U_{ab}-U_{ac}) \subseteq J$ is not
hard:  both $U_{ab}, U_{ac}$ are sent by $\phi$ to $t_a t_b^{-1}$,
so the binomial $U_{ab}-U_{ac}$ is in the kernel $J$, and since 
circuits $C$ in the directed graph $P$ remain circuits in the quotient
directed graph $\bar{P}$, Proposition~\ref{circuit-generators}
implies the inclusion $I_P \subseteq J$.

  For the reverse inclusion $J \subseteq I_P+(U_{ab}-U_{ac})$,
first note that one can re-interpret the ideal $J$: it is
the toric ideal for the presentation of the semigroup 
$R_{\bar{P}}$ in which the Hasse diagram edge 
$a \lessdot_{\bar{P}} bc$ has been
``doubled'' into two parallel directed edges associated with the
same monomial $t_a t_b^{-1}$, but hit by two variables
$U_{ab}, U_{ac}$ from $S_P$.  Denote by $\bar{P}^+$ this
directed graph obtained from the Hasse diagram for $\bar{P}$
by doubling this edge.  

\begin{figure}[ht]
\includegraphics[height=3cm]{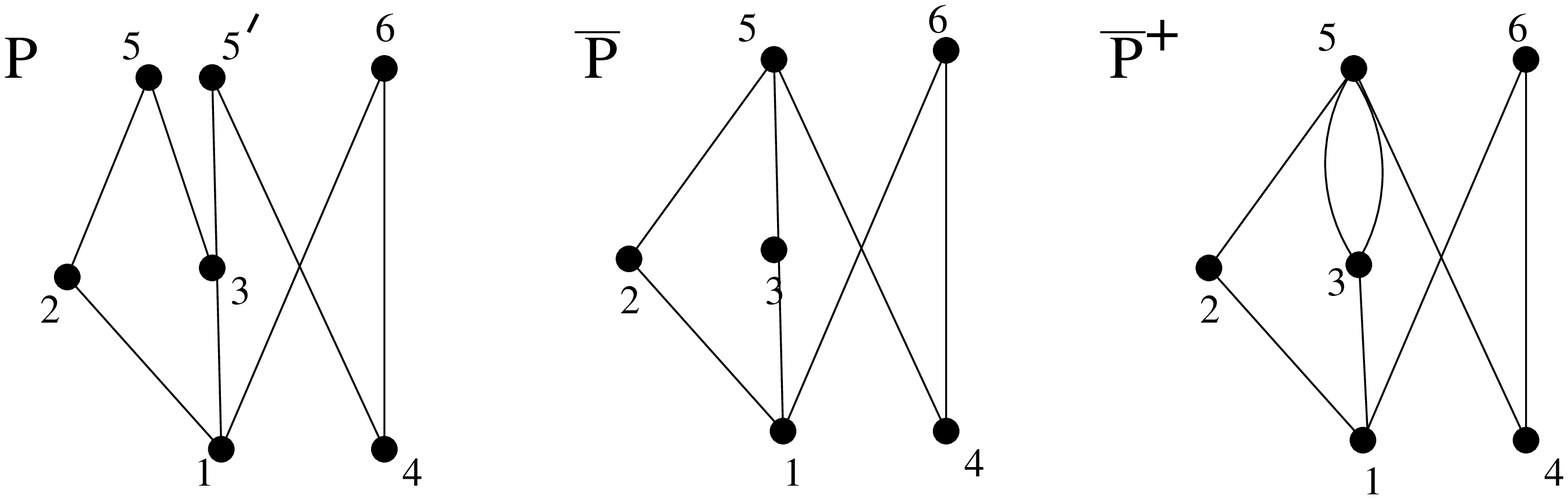}
\caption{An example of $P, \bar{P}, \bar{P}^+$.}
\label{notch-proof-figure}
\end{figure}

\noindent
The analysis from Proposition~\ref{circuit-generators}
then shows that $J$ is generated by the circuit binomials $U(C)$
as $C$ runs through the circuits of $\bar{P}^+$.

It remains to show that for every circuit $C$ in the
directed graph $\bar{P}^+$, the circuit binomial
$U(C)$ lies in $I_P+(U_{ab}-U_{ac})$.  

  If this circuit $C$ in $\bar{P}^+$ does not pass through 
the collapsed vertex $bc$ in $\bar{P}^+$,
then $C$ is also a circuit in $P$, and hence $U(C)$ already lies in $I_P$.

If this circuit $C$ does pass through vertex $bc$, we distinguish two cases.
Consider the partition of the set
$E_{bc}=E_b \sqcup E_c$ of edges
incident to $bc$ in $\bar{P}^+$, where $E_b$ (resp. $E_c$)
is the subset of edges whose preimage in $P$ is incident to $b$ (resp. $c$).
If the two edges of $C$ incident to $bc$ lie in the same set of this partition,
then, as before, $C$ is also a circuit in $P$, and hence $U(C)$ already lies in $I_P$.

Consider now the last case where $C$ does pass through vertex $bc$,
 but the two edges of $C$ incident to $bc$ lie respectively
in $E_b$ and $E_c$. Since $b,c$ lie in different
connected components of $P \setminus P_{\leq a}$,
the circuit $C$ must pass through at least one vertex $d \leq_P a$.  
Use this to create two directed cycles $C_b, C_c$ in $P$:
\begin{enumerate}
\item[$\bullet$] $C_b$ follows $b$ to $d$ along the same path 
$\pi_{bd}$ chosen by $C$, then follows $d$ to $a$ along any saturated chain $\pi_{da}$ in $P$
between them, and finally from $a$ to $b$.
\item[$\bullet$] $C_c$ follows $a$ to $d$ reversing the same saturated
chain $\pi_{da}$, then follows $d$ to $c$ along the same path 
$\pi_{dc}$ chosen by $C$, and finally goes from $c$ to $a$.
\end{enumerate}
One then has the following relation in $S_P$
\begin{equation}
\label{notch-circuit-relation}
\begin{aligned}
U(C) &= U(C_b) \cdot W(\pi_{dc})\\
     &\quad + U(C_c) \cdot A(\pi_{bd}) \\
      &\qquad + (U_{ab}-U_{ac}) 
          \cdot W(\pi_{dc}) 
          \cdot A(\pi_{bd}) 
          \cdot W(\pi_{da})
\end{aligned}
\end{equation}
where for a path $\pi$ of edges in the Hasse diagram one defines monomials
$$
W(\pi)
:=\prod_{\substack{i \lessdot_P j:\\
            i \rightarrow j \text{ appears in } \pi}} U_{i j} 
$$
$$
A(\pi)
:=\prod_{\substack{i \lessdot_P j:\\
            i \leftarrow j \text{ appears in } \pi}} U_{i j}.
$$
The relation \eqref{notch-circuit-relation}
shows that $U(C)$ lies in $I_P+(U_{ab}-U_{ac})$, as desired.

For the remaining assertions, note that since $R_P$ is 
a subalgebra of the Laurent polynomial ring, it is an integral
domain, and therefore $t_a t_b^{-1}-t_a t_c^{-1}$
is a non-zero-divisor of $R_P$. After identifying the grading 
variables $x_b=x_c$, this element $t_a t_b^{-1}-t_a t_c^{-1}$ 
becomes homogeneous of degree $e_a-e_b$.  
\end{proof}

Opening notches in a poset $P$ provides a flexible way
to understand some previously observed factorizations
of the numerator of $\Psi_P(\xx)$, while at the same
time giving information about the semigroup ring
$k[\Kroot_P \cap \Lroot_P]$ and its Hilbert
series.

\begin{example}
One way to explain the factorization of the numerator of
$\Psi_P(\xx)$ for the example from \cite[Figure 2]{BoussicaultFeray}, 
is to successively ``open two notches'', as shown here
$$
\includegraphics[width=200pt]{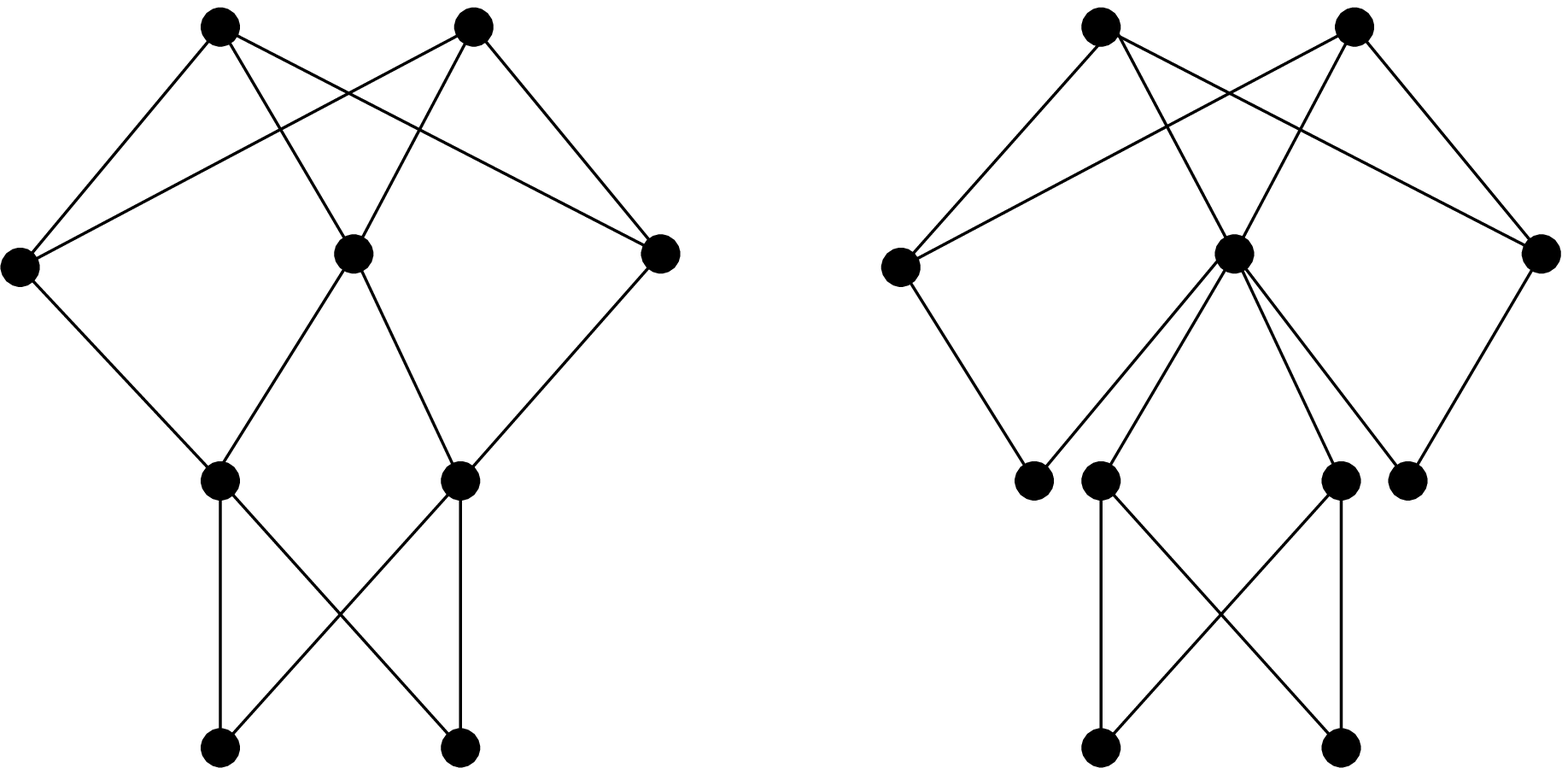}
$$
and then apply Corollary~\ref{biconnected-component-corollary}
to the poset on the right, which has two biconnected components.
\end{example}

\begin{example}
\label{general-disconnecting-chain-example}
In \cite[\S 7]{BoussicaultFeray} it was explained
how a disconnecting chain
$$
\sigma=(p_1 \lessdot_P p_2 \lessdot_P \cdots \lessdot_P p_{t-1} \lessdot p_t)
$$
in $P$, that is, one for which $P \setminus \sigma$ has several
connected components, leads to a factorization of the numerator of
$\Psi_P(\xx)$ into factors indexed by each such
component.  After fixing one of the 
connected components $Q$ of $P \setminus \sigma$,
one can use several operations of opening notches, beginning
with one that creates two elements $p_t,p_t'$ covering $p_{t-1}$,
and continuing down the chain $\sigma$, to ``peel off'' a copy of $Q \sqcup \sigma$ 
until it is attached to $P \setminus Q$ only at the vertex $p_1$.
At this stage use Corollary~\ref{biconnected-component-corollary}, to recover
the factorization of \cite[Theorem 7.1]{BoussicaultFeray}.

We omit a detailed discussion to avoid the use of heavy notation.  
However, Example~\ref{disconnecting-chain-example}
illustrates the principle. 
\end{example}

Lastly, one can use this to deduce a stronger form of Theorem A
from the introduction.  For a strongly planar poset $P$,
and a bounded region $\rho$ of the plane enclosed by
its Hasse diagram, recall that $\min(\rho), \max(\rho)$
denote the $P$-minimum, $P$-maximum elements among
the elements of $P$ lying on $\rho$.  Name the elements
on the unique two
maximal chains from $\min(\rho)$ to $\max(\rho)$
that bound $\rho$ as follows:
\begin{equation}
\label{chains-bounding-region}
\begin{aligned}
&\min(\rho)=:i_0 \lessdot_P 
  i_1 \lessdot_P \cdots \lessdot_P i_{r-1} 
\lessdot_P i_r:=\max(\rho) \\
&\min(\rho)=:j_0 \lessdot_P 
  j_1 \lessdot_P \cdots \lessdot_P j_{s-1} 
\lessdot_P j_s:=\max(\rho) \\
\end{aligned}
\end{equation}
Lastly, let $f_\rho$ be the following binomial
in the polynomial algebra $S:=k[U_{ij}]_{i \lessdot_P j}$:
$$
f_\rho:= \prod_{p=1}^r U_{i_{p-1} i_p} -
         \prod_{q=1}^s U_{j_{q-1} j_q}.
$$
In other words $f_\rho$ is the circuit binomial $U(C)$
for the directed circuit $C$ that goes up and down the
two maximal chains in \eqref{chains-bounding-region} bounding $\rho$.
\begin{corollary}
\label{planar-complete-intersection}
For any strongly planar poset $P$ on $\{1,2,\ldots,n\}$,
one has a complete intersection presentation for its semigroup
ring $k[\Kroot_P \cap \Lroot]$ as the quotient $S/I$
where  $S:=k[U_{ij}]_{i \lessdot_P j}$ and $I$ is the ideal
generated by the $\{f_\rho\}$ as $\rho$ runs through all
bounded regions for the Hasse diagram of $P$.

Consequently,
$$
\begin{aligned}
\HHH(\Kroot_P;\XX) 
&= \frac{\prod_{\rho} (1-X_{\min(\rho)}X^{-1}_{\max(\rho)}) }
                {\prod_{i \lessdot_P j} (1-X_i X_j^{-1}) } \\
\Psi_P(\xx)
&= \frac{\prod_{\rho} (x_{\min(\rho)}-x_{\max(\rho)})}
                {\prod_{i \lessdot_P j} (x_i-x_j)}
\end{aligned}
$$
where the last equality assumes that $P$ is connected.
\end{corollary}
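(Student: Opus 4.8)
The plan is to reduce the whole statement to one combinatorial claim about the toric ideal, and then feed that claim into the complete-intersection machinery of Section~\ref{complete-intersection-section}. Write $R:=k[\Kroot_P\cap\Lroot]\cong S/I$ as in Section~\ref{sect:anlalysis_semigroup}, where $S=k[U_{ij}]_{i\lessdot_P j}$ has one variable per Hasse edge, say $m$ of them, and $I=\ker(S\to R)$. Each $f_\rho$ is the circuit binomial $U(C_\rho)$ of the directed circuit $C_\rho$ bounding $\rho$, so $f_\rho\in I$ by Proposition~\ref{circuit-generators}, which gives the easy inclusion $(\{f_\rho\})\subseteq I$. Assuming $P$ connected (the general case of the ring statement follows by applying Corollary~\ref{biconnected-component-corollary} to the biconnected components, whose bounded regions partition those of $P$), Corollary~\ref{identify-denominators}(i) gives $\dim R=\dim_\RR\Kroot_P=n-1$, while Euler's formula for the connected planar Hasse diagram says it has exactly $m-(n-1)$ bounded regions. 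Hence, once the reverse inclusion $I\subseteq(\{f_\rho\})$ is proved, $I$ is generated by exactly $m-\dim R$ elements; since $S$ is Cohen--Macaulay, by the argument recalled in Section~\ref{complete-intersection-section} this forces the $f_\rho$ to form an $S$-regular sequence, so $R=S/(\{f_\rho\})$ is a complete intersection. Proposition~\ref{prop:complete-intersection-consequences} then delivers the formulas for $\HHH(\Kroot_P;\XX)$ and $\Psi_P(\xx)=\sss(\Kroot_P;\xx)$, using that $f_\rho$ is $\Lroot$-homogeneous of degree $\delta_\rho=e_{\min(\rho)}-e_{\max(\rho)}$ (each chain in \eqref{chains-bounding-region} telescopes), so that $\XX^{\delta_\rho}=X_{\min(\rho)}X_{\max(\rho)}^{-1}$ and $\langle\xx,\delta_\rho\rangle=x_{\min(\rho)}-x_{\max(\rho)}$.

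Thus everything comes down to showing $I\subseteq(\{f_\rho\})$. By Proposition~\ref{circuit-generators} it suffices to show that every circuit binomial $U(C)$, as $C$ ranges over circuits of the Hasse diagram of $P$, already lies in $(\{f_\rho\})$. Here strong planarity is essential. Fix the upward-planar embedding of the Hasse diagram in which every bounded region has the two-chain shape used to define the $f_\rho$. The circuit $C$ is a simple closed curve in this embedding, hence bounds a disk $\Delta$ tiled by a subset $\{\rho_1,\dots,\rho_k\}$ of the bounded regions, and, after choosing the orientation of $C$ appropriately, $C$ is the boundary cycle of this tiled disk. I would induct on $k$. For $k=1$ one has $C=C_{\rho_1}$ and $U(C)=\pm f_{\rho_1}$. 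For $k\geq 2$, pick a bounded region $\rho$ of the tiling sharing an edge with $C$; the symmetric difference of the edge sets of $C$ and $C_\rho$, with the two circuits oriented coherently as boundary cycles so that shared edges cancel, is a family of directed arcs with equal in- and out-degree at every vertex, hence decomposes into circuits $C'_1,\dots,C'_s$, each enclosing strictly fewer bounded regions. A bookkeeping identity of exactly the shape of \eqref{notch-circuit-relation}, produced by the same telescoping used in the proof of Proposition~\ref{circuit-generators}, then writes $U(C)$ as an $S$-combination, with monomial coefficients, of $f_\rho$ and the $U(C'_t)$; by induction each $U(C'_t)\in(\{f_\rho\})$, and hence so is $U(C)$.

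The step I expect to be the main obstacle is exactly this last identity: organizing the coherent orientations of $C$, $C_\rho$ and the leftover circuits $C'_t$, and pinning down the precise monomial multipliers so that $U(C)-\sum_t(\text{monomial})\,U(C'_t)$ is a multiple of $f_\rho$. This is bookkeeping rather than a genuine conceptual difficulty --- the relation \eqref{notch-circuit-relation} in the proof of Theorem~\ref{notch-theorem} and the telescoping in the proof of Proposition~\ref{circuit-generators} are the templates --- but it is the point at which real care is needed, since one must track which edges run ``with'' and which ``against'' each circuit and verify that the cancellations happen as claimed. A secondary, more routine point is to record once and for all the structural fact (due to Greene) that, for a strongly planar poset, every bounded region really does have boundary a single circuit with well-defined $\min(\rho)$ and $\max(\rho)$, so that the $f_\rho$ and the Euler count are meaningful; and to check that distinct regions give distinct binomials, so that the number of $f_\rho$'s is exactly $m-\dim R$.
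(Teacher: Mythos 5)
Your overall strategy is sound and genuinely different from the paper's: you reduce everything to the single claim that the toric ideal $I$ is generated by the region binomials $f_\rho$, and then the Euler-formula count ($m-(n-1)$ bounded regions for connected $P$, matching $m-\dim R$), the Cohen--Macaulay regular-sequence criterion of Section~\ref{complete-intersection-section}, and Proposition~\ref{prop:complete-intersection-consequences} (with $\delta_\rho=e_{\min(\rho)}-e_{\max(\rho)}$, correctly computed) would deliver both the presentation and the two displayed formulas. The paper never attacks that generation statement head-on: it inducts on the number of bounded regions, choosing a disconnecting chain, repeatedly opening notches via Theorem~\ref{notch-theorem} (each notch-closure changes the toric ideal by exactly one further linear binomial that is a nonzerodivisor), and then invoking the biconnected-component tensor decomposition of Corollary~\ref{biconnected-component-corollary}, with Corollaries~\ref{simplicial-characterization} and~\ref{unicyclic-corollary} as base cases. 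So the hard toric-ideal work is hidden inside Theorem~\ref{notch-theorem} rather than done for arbitrary circuits.

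The gap in your version is exactly the step you dismiss as bookkeeping, and it is not bookkeeping --- it is the whole content of the first assertion. If $C$ decomposes in the cycle space as $C=C_\rho\,\triangle\,C'$ with shared path $\beta$ (say $C_\rho=\alpha\beta$, $C'=\beta^{\mathrm{rev}}\gamma$, $C=\alpha\gamma$), the natural telescoping gives
$W(\beta)\,U(C)=W(\gamma)\,U(C_\rho)+A(\alpha)\,U(C')$,
i.e.\ only a \emph{monomial multiple} of $U(C)$ lands in the ideal, and that monomial cannot be cancelled: a priori $(\{f_\rho\})$ is merely a lattice-basis ideal whose \emph{saturation} with respect to $\prod U_{ij}$ is $I$ (compare the twisted cubic, where a lattice basis of two binomials generates strictly less than the codimension-two toric ideal, so the Euler/codimension count alone cannot rescue you). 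The identity \eqref{notch-circuit-relation} is not a usable template here, since it holds only modulo the extra \emph{linear} generator $U_{ab}-U_{ac}$, which has no analogue in your setting. One can check on small examples (e.g.\ a quadrilateral region stacked against a triangle-shaped region) that the exact membership $U(C)\in(\{f_\rho\})$ does hold but is \emph{not} produced by the symmetric-difference telescoping; what is needed is an argument that $W(C)$ can be rewritten into $A(C)$ by moves each replacing the full chain-monomial of one side of some $f_\rho$ by the other, with every intermediate expression a genuine monomial --- a planarity-specific extremal argument (also handling the case where $C\,\triangle\,C_\rho$ is not a simple circuit), which your proposal does not supply.
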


\begin{proof}
Use induction on the number of bounded regions $\rho$.
In the base cases where there are no such regions or
one such region, apply 
Corollary~\ref{simplicial-characterization} or 
\ref{unicyclic-corollary}, respectively.

In the inductive step, find a disconnecting chain for $P$
that separates at least two bounded regions,
as in \cite[Proposition 7.4]{BoussicaultFeray}.
Use Proposition~\ref{notch-theorem} repeatedly to
open notches down this chain, until the resulting poset has two
biconnected components attached at one vertex of
the chain, and apply Corollary~\ref{biconnected-component-corollary},
as in Example~\ref{general-disconnecting-chain-example}.
\end{proof}

\section{Reinterpreting the main transformation}
\label{Poinconnage}

  Our goal in this final section is to reinterpret geometrically
a very flexible identity that was used to deduce
most of the results on $\Psi_P(\xx)$ in \cite{BoussicaultFeray},
and called there the {\it main transformation}:
\vskip .1in
\noindent 
{\bf Theorem.}(\cite[Theorem 4.1]{BoussicaultFeray})
{\it 
Let $C$ be one of the two possible orientations
of a circuit in the Hasse diagram for a poset $P$.
Let $W \subset C$ be the edges of $C$ which are
directed upward in $P$.  Then
\begin{equation}
\label{poinconnage}
\sum_{E \subset W} (-1)^{|E|} \Psi_{P \setminus E}(\xx) = 0
\end{equation}
where $P \setminus E$ is the poset whose Hasse diagram is obtained
from that of $P$ by removing the edges in $E$.
}
\vskip.1in
\noindent

\begin{remark}
In fact, \eqref{poinconnage} was deduced in  
\cite[Theorem 4.1]{BoussicaultFeray} from a geometric identity
equivalent to the following:
\begin{equation}
\label{dual-poinconnage}
\sum_{E \subset W} (-1)^{|E|} \chi_{\Kweight_{P \setminus E}} = 0.
\end{equation}
Using the duality discussed in Remark~\ref{duality-relation-remark}, 
identity \eqref{dual-poinconnage} implies the following geometric identity
underlying \eqref{poinconnage}:
\begin{equation}
\label{poinconnage-geometrically}
\sum_{E \subset W} (-1)^{|E|} \chi_{\Kroot_{P \setminus E}} = 0.
\end{equation}
\end{remark}

\begin{remark}
    In \cite{BoussicaultFeray},
    the identity \eqref{poinconnage} was used to prove some statements
    on $\Psi$ by induction 
on the number of independent cycles (the {\it cyclomatic number})
in the Hasse diagram for $P$:  terms indexed by non-empty
subsets $E$ correspond to posets $P \setminus E$ with
fewer independent cycles.  In the base case for such inductive proofs,
the Hasse diagram is acyclic, and possibly disconnected, so that
either $\Psi_P(\xx)=0$, or 
Corollary~\ref{simplicial-characterization} applies.

  Furthermore, in \cite[section 6]{BoussicaultFeray}, it was shown how
the choice of an embedding of the Hasse diagram of $P$ onto a surface,
together with a rooting at one of its half-edges, leads to a good
a choice of circuits $C$ in the induction.  This expresses
$\Psi_P(\xx)=\sum_i \Psi_{P_i}(\xx)$ for various posets $P_i$ with tree
Hasse diagrams that can be described explicitly in terms of the embedding
and rooting.  Using \eqref{poinconnage-geometrically}, one can show that
this corresponds to an explicit 
triangulation for the cone $\Kroot_P$ into subcones
$\Kroot_{P_i}$, in which each subcone uses no new extreme rays.

Unfortunately, iterating \eqref{dual-poinconnage}
does not in general lead to proofs for results
on $\Phi_P(\xx)$ via induction on cyclomatic number,
as the base cases with no cycles correspond to 
cones $\Kweight_P$ which are not necessarily simplicial; 
see Corollary~\ref{simplicial-characterization}.
\end{remark}

\begin{remark}
Unlike equation \eqref{incomparable-pair-recurrence}, 
this identity \eqref{poinconnage-geometrically}
involves only pointed cones.
\end{remark}

Our goal here is to point out how the geometric statement
\eqref{poinconnage-geometrically} generalizes to other families of 
cones and vectors.  We begin with a geometric generalization of the 
notion of a circuit $C$ in the Hasse diagram for $P$
and its subset of upward edges $W \subset C$.

\begin{definition}
Given two subsets of $W, V$ of vectors in $\RR^d$,
say that $W$  {\it is cyclic\footnote{In the special case
where $V$ is empty, this is the notion of $W$ being a
{\it totally cyclic} collection of vectors from oriented
matroid theory; see \cite[Definition 3.4.7]{BLSWZ}.} 
with respect to $V$}
if there exists a positive linear combination of $W$ lying in $\RR_+ V$, 
that is,  
$
\sum_{w \in W} a_w w = \sum_{v \in V} b_v v
$
for some real numbers $a_w > 0, b_v \geq 0$.
\end{definition}

\begin{example}
Let $C$ be one of the two possible orientations
of a directed circuit in the Hasse diagram for a poset $P$.
Let $W \subset C$ be the edges of $C$ which are
directed upward in $P$.  Then
$\{e_i - e_j: (i,j) \in W\}$ 
is cyclic with respect to the set 
$V:=\{e_i - e_j: i \lessdot_P j, (i,j) \notin W\}$,
due to the relation
$$
\sum_{\substack{i \lessdot_P j:\\ (i,j) \in W}} e_i -e_j 
= \sum_{\substack{i \lessdot_P j:\\ (j,i) \in C \setminus W}} e_i -e_j. 
$$
\end{example}

\noindent
Bearing this example in mind, the following proposition
gives the desired generalization of \eqref{poinconnage} and
\eqref{poinconnage-geometrically}.

\begin{proposition}\label{prop:cyclic_vectors}
For subsets $W,V$ of vectors in $\RR^d$ where
$W$ is cyclic with respect to $V$, 
one has the identity among characteristic vectors of cones
$$
\sum_{B \subset W} (-1)^{|B|} \chi_{\RR_+(V \cup B)} = 0
$$
and therefore 
$$
\sum_{B \subset W} (-1)^{|B|} s(\RR_+(V \cup B);\xx) = 0.
$$
\end{proposition}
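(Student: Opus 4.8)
The plan is to prove the identity among characteristic functions first, since the second identity then follows immediately by applying the valuation $s(-;\xx)$ from Proposition~\ref{integral-valuation-prop} and using property (vi). So I would concentrate on establishing
$$
\sum_{B \subset W} (-1)^{|B|} \chi_{\RR_+(V \cup B)} = 0
$$
pointwise: fix a point $p \in \RR^d$ and show the alternating sum of the values $\chi_{\RR_+(V\cup B)}(p)$ over $B \subseteq W$ vanishes. Write $W_p := \{w \in W : p \in \RR_+(V \cup \{w\} \cup (\text{rest of } W \text{ already used}))\}$ — more precisely, I want to isolate the set of $w \in W$ that are ``optional'' for membership of $p$. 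The key structural fact to extract from the cyclicity hypothesis is: if $p \in \RR_+(V \cup B)$ for some $B \subseteq W$, then actually $p \in \RR_+(V \cup B')$ for every $B'$ with $B \subseteq B' \subseteq W$ — monotonicity in $B$ is obvious — but more importantly, because $W$ is cyclic with respect to $V$, one can trade any single generator $w \in B$ against the relation $\sum_{w'\in W} a_{w'} w' = \sum_{v} b_v v$ with all $a_{w'} > 0$. Concretely, if $p \in \RR_+(V \cup B)$ and $w_0 \in B$, adding a large multiple of the cyclic relation lets one rewrite the conic combination so that the coefficient of $w_0$ becomes zero while all coefficients of $V$ and of $W \setminus \{w_0\}$ stay nonnegative; hence $p \in \RR_+(V \cup (W \cup B) \setminus \{w_0\}) = \RR_+(V \cup W)$. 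Iterating, $p \in \RR_+(V\cup B)$ for any nonempty $B$ forces $p \in \RR_+(V \cup W)$, and conversely $\RR_+(V\cup W)$ contains all these cones.

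From here the combinatorics is a standard inclusion–exclusion collapse. Fix $p$ and set
$$
G_p := \{\, w \in W : p \in \RR_+(V \cup (W \setminus \{w\})) \,\},
$$
the set of generators ``not needed'' to express $p$ — equivalently, by the trading argument, $G_p = W$ if $p \in \RR_+(V\cup W)$ lies in the relative interior direction of the cyclic relation, and otherwise there is a well-defined minimal ``support obstruction''. The cleaner formulation: let $B_0 := \{\, w \in W : p \notin \RR_+(V \cup (W\setminus\{w\}))\,\}$ be the set of \emph{indispensable} generators. Then $\chi_{\RR_+(V\cup B)}(p) = 1$ if and only if $B_0 \subseteq B$, provided $p \in \RR_+(V\cup W)$ at all; if $p \notin \RR_+(V\cup W)$ then $\chi_{\RR_+(V\cup B)}(p)=0$ for all $B$ and the sum is trivially zero. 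In the remaining case the sum becomes $\sum_{B : B_0 \subseteq B \subseteq W} (-1)^{|B|} = (-1)^{|B_0|}\sum_{A \subseteq W\setminus B_0}(-1)^{|A|}$, which vanishes as long as $W \setminus B_0 \neq \emptyset$. So the crux is to show $B_0 \neq W$, i.e. at least one generator of $W$ is dispensable — and this is exactly what cyclicity buys: in the relation $\sum_{w} a_w w = \sum_v b_v v$ every $a_w$ is strictly positive, so for any $p \in \RR_+(V \cup W)$, written $p = \sum_v \gamma_v v + \sum_w \delta_w w$ with $\gamma_v,\delta_w \geq 0$, pick any $w_0$ and subtract $(\delta_{w_0}/a_{w_0})$ times the relation to kill the coefficient of $w_0$ at the cost of only \emph{increasing} the coefficients of $V$ and the other $w$'s; hence $w_0 \notin B_0$, so in fact $B_0 = \emptyset$ and the sum collapses for the strongest possible reason.

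I expect the main obstacle to be bookkeeping the ``trading'' step correctly in full generality — in particular making sure that when $p$ genuinely fails to lie in $\RR_+(V\cup W)$ one does not accidentally claim it lies in some smaller cone, and conversely handling degenerate situations where $p=0$ or where $V$ already spans enough of the cone. But none of this is deep: it is linear algebra over the nonnegative orthant plus a one-line inclusion–exclusion, and the hypothesis ``$a_w > 0$ for all $w \in W$'' is precisely engineered to make every $w \in W$ removable. Once $\sum_{B\subseteq W}(-1)^{|B|}\chi_{\RR_+(V\cup B)} = 0$ is in hand, Proposition~\ref{integral-valuation-prop}(vi) gives $\sum_{B \subseteq W}(-1)^{|B|} s(\RR_+(V\cup B);\xx) = 0$ directly, with the non-full-dimensional cones contributing zero automatically, completing the proof.
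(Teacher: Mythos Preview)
Your trading step contains a sign error that breaks the argument. When you write $p = \sum_v \gamma_v v + \sum_w \delta_w w$ and subtract $c = \delta_{w_0}/a_{w_0}$ times the relation $\sum_w a_w w - \sum_v b_v v = 0$, the coefficient of each $w \neq w_0$ becomes $\delta_w - c\, a_w$, which \emph{decreases}, not increases; it stays nonnegative only when $\delta_{w_0}/a_{w_0} \le \delta_w/a_w$. So you can eliminate the particular $w_0$ achieving the minimum ratio, but not an arbitrary one, and the conclusion $B_0=\varnothing$ is unjustified. (Indeed if $B_0=\varnothing$ were always true you would be asserting $p\in\RR_+V$ whenever $p\in\RR_+(V\cup W)$, which is plainly false when $V=\varnothing$ and $p\neq 0$.)

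Consequently your structural claim---that the family $\{B\subseteq W: p\in\RR_+(V\cup B)\}$ is the interval $[B_0,W]$---fails. The example immediately following the statement in the paper has $V=\varnothing$, $W=\{w_1,w_2,w_3,w_4\}\subset\RR^2$, and a point $p$ for which this family is
\[
\{w_1,w_4\},\ \{w_3,w_4\},\ \{w_1,w_2,w_4\},\ \{w_1,w_3,w_4\},\ \{w_2,w_3,w_4\},\ \{w_1,w_2,w_3,w_4\},
\]
an upper set with two distinct minimal elements; the paper explicitly remarks that these sets ``do not form an interval in the boolean lattice.'' Your one-line inclusion--exclusion collapse therefore does not apply. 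The paper's proof replaces it with a topological argument: it covers the contractible polyhedron $X_p$ of all nonnegative representations of $p$ by the convex pieces $X_p(w_0)=\{(\aa,\bb)\in X_p: a_{w_0}=\min(\aa)\}$, shows that $\bigcap_{w_0\in A}X_p(w_0)\neq\varnothing$ iff $p\in\RR_+(V\cup(W\setminus A))$, and applies a nerve lemma to conclude that the reduced Euler characteristic of this family vanishes---which is exactly the alternating sum in question.
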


\begin{example}
Consider the set of vectors $W=\{w_1,w_2,w_3,w_4\}$ in $\RR^2$
shown below, and let $V$ be the empty set.  The set $W$ is easily seen to be cyclic
with respect to $V$.
$$
\includegraphics[width=100pt]{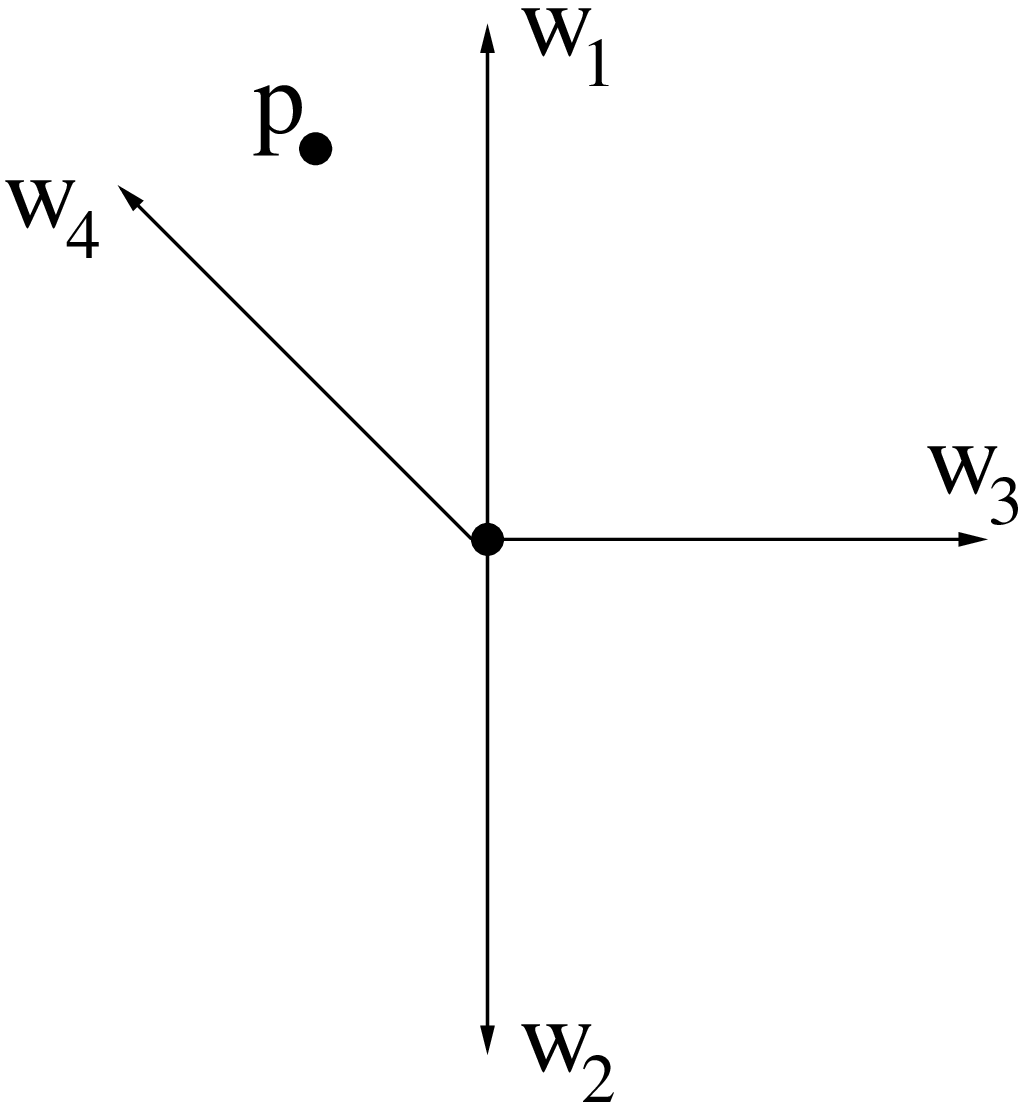}
$$
Consider the point $p$ depicted.  The subsets $B \subset W$ for
which $p$ lies in the cone $\RR_+(V \cup B)$, 
so that $\chi_{\RR_+(V \cup B)}(p)=1$, are
$$
\begin{aligned}
\{w_1,w_4\},
\{w_3,w_4\},
\{w_1,w_2,w_4\},
\{w_1,w_3,w_4\},
\{w_2,w_3,w_4\},
\{w_1,w_2,w_3,w_4\}
\end{aligned}
$$
The sum of $(-1)^{|B|}$ over these sets $B$ vanishes,
as predicted by the proposition.  
However, note that this does not hold for
trivial reasons, e.g., these sets $B$ do not form an interval 
in the boolean lattice.
\end{example}

\begin{proof}[Proof of Proposition \ref{prop:cyclic_vectors}.]
Up to a rescaling of the vectors in $W$, one can assume 
that $u:=\sum_{w \in W} w$ lies in $\RR_+ V$.

One must show that for every point $p \in \RR^d$, one has
\begin{equation}
\label{point-by-point-sum}
\sum_{\substack{B \subset W:\\ p \in \RR_+(V \cup B)}} (-1)^{|B|} = 0.
\end{equation}
If $p$ does not lie in the cone $\RR_+(V \cup W)$, this holds because
the left side is an empty sum.  So without loss of generality
$p$ lies in $\RR_+ (V \cup W)$, meaning that the set
$$
X_p:=\left\{(\aa,\bb) \in \RR_+^W \times \RR_+^V:
p = \sum_{w \in W} a_w w + \sum_{v \in V}b_v v \right\}
$$
is a non-empty convex polyhedral cone inside $\RR^W \times \RR^V$.
Cover $X_p$ by the family of subsets $\{ X_p(w_0) \}_{w_0 \in W}$
defined by
$$
X_p(w_0):=\{(\aa,\bb) \in X_p: a_{w_0}=\min(\aa) \}.
$$
These sets $X_p(w_0)$ are also convex polyhedral subsets, although possibly empty. 
The nerve of this covering of $X_p$ is the abstract simplicial complex 
consisting of all subsets $A \subset W$ for which 
$
\bigcap_{w_0 \in A} X_p(w_0) 
$
is nonempty.  A standard nerve lemma (e.g., \cite[Theorem 10.7]{Bjorner})
implies that the geometric realization of this nerve is homotopy equivalent
to the contractible space $X_p$, and hence its 
{\it (reduced) Euler characteristic} 
$
\sum_{A}
(-1)^{|A|-1}
$
vanishes, where here the sum runs over subsets $A$ with
$\bigcap_{w_0 \in A} X_p(w_0)$ nonempty.
Thus equation \eqref{point-by-point-sum} will follow from this claim:

\begin{quote}
{\bf Claim.}
The set $\bigcap_{w_0 \in A} X_p(w_0)$ is nonempty if and only if
$p$ lies in $\RR_+(V \cup (W \setminus A))$.
\end{quote}

For the ``if'' assertion of the claim, note that if $p$ lies in 
$\RR_+(V \cup (W \setminus A))$, then any expression
$$
p=\sum_{w \in W \setminus A} a_w w + \sum_{v \in V} b_v v
$$
leads to a similar expression
$$
p=\sum_{w \in W} a_w w + \sum_{v \in V} b_v v
$$
by defining $a_{w_0}:=0$ for all $w_0$ in $A$.  
Furthermore, the coefficients in the
latter expression give an element $(\aa,\bb)$ lying
in $\bigcap_{w_0 \in A} X_p(w_0)$.

For the ``only if'' assertion, assuming 
that $\bigcap_{w_0 \in A} X_p(w_0)$ is nonempty,
pick $(\aa,\bb)$ lying in this set.  Thus 
$p=\sum_{w \in W} a_w w + \sum_{v \in V} b_v v$
and one has $\mu:=\min(\aa)=a_{w_0}$ for all $w_0$ in $A$.
Rewriting this as
$$
p=\sum_{w_0 \in A} \mu \cdot w_0 
     + \sum_{w \in W \setminus A} a_w w 
      + \sum_{v \in V} b_v v
$$
and using the fact that $u=\sum_{w \in W} w$ lies in $\RR_+ V$,
one can rewrite
$$
p= \underbrace{\sum_{w \in W \setminus A}  (a_w-\mu) w}_{\in \RR_+ (W\setminus A)}
      \quad + \quad
       \underbrace{\mu \cdot u +\sum_{v \in V} b_v v}_{\in \RR_+ V}.
$$
Therefore $p$ lies in $\RR_+(V \cup (W \setminus A))$.
\end{proof}

\section*{Acknowledgements}
This work began during a sabbatical visit of V.R. to
the Institut Gaspard Monge at the Universit\'e Paris-Est,
and he thanks them for their hospitality.  
He is also grateful to Prof. Michelle Vergne for an 
enlightening explanation of total residues.

This work was finished during a visit of the second author to the University
of Minnesota and he thanks them for the invitation and the welcoming
environment.

The authors also would like to thank an anonymous referee for helpful comments.


\end{document}